\newtheorem{definition}{Definition}[section]
\newtheorem{lemma}[definition]{Lemma}
\newtheorem{prop}[definition]{Proposition}
\newtheorem{theorem}[definition]{Theorem}
\newtheorem{cor}[definition]{Corollary}
\theoremstyle{definition}
\newtheorem{fact}[definition]{Fact}
\newcommand*{\AR}{\mathop{{\rm AR}}\nolimits}
\newcommand*{\CB}{\mathop{{\rm CB}}\nolimits}
\newcommand*{\CM}{\mathop{{\rm CM}}\nolimits}
\newcommand*{\End}{\mathop{{\rm End}}\nolimits}
\newcommand*{\Ext}{\mathop{{\rm Ext}}\nolimits}
\newcommand*{\Hom}{\mathop{{\rm Hom}}\nolimits}
\newcommand*{\nil}{\mathop{{\rm nil}}\nolimits}
\newcommand{\PE}{\mathop{{\rm PE}}\nolimits}
\newcommand*{\rad}{\mathop{{\rm rad}}\nolimits}
\newcommand*{\Soc}{\mathop{{\rm Soc}}\nolimits}
\newcommand{\Zg}{\mathop{{\rm Zg}}\nolimits}
\newcommand{\ZCM}{\mathop{{\rm ZCM}}\nolimits}
\newcommand*{\ex}{\exists}
\newcommand*{\fa}{\forall}
\newcommand*{\ih}{\imath}
\newcommand*{\ms}{\models}
\newcommand*{\ov}{\overline}
\newcommand*{\seq}{\subseteq}
\newcommand*{\sm}{\setminus}
\newcommand*{\bsm}{\left(\begin{smallmatrix}}
\newcommand*{\esm}{\end{smallmatrix}\right)}
\newcommand*{\bp}{\begin{pmatrix}}
\newcommand*{\ep}{\end{pmatrix}}
\newcommand*{\fty}{\infty}
\newcommand*{\wg}{\wedge}
\newcommand*{\wt}{\widetilde}
\renewcommand{\emph}{\textbf}
\newcommand*{\xr}{\xrightarrow}
\newcommand*{\mD}{\mathcal{D}}
\newcommand*{\mm}{\mathbf{m}}
\newcommand*{\Z}{\mathbb{Z}}
\newcommand*{\al}{\alpha}
\newcommand*{\be}{\beta}
\newcommand*{\lam}{\lambda}
\newcommand*{\om}{\omega}
\renewcommand*{\phi}{\varphi}
\begin{document}

\footskip=30pt

\date{}

\title[]{The Ziegler spectrum of the $D$-infinity plane singularity}

\author[]{Inna Los and Gena Puninski}

\address{Belarusian State University, Faculty of Mechanics and Mathematics, av. Nezalezhnosti 4,
Minsk 220030, Belarus}

\email{innalos.los1@gmail.com, punins@mail.ru}

\subjclass[2000]{13C14 (primary), 13L05, 16D50}

\keywords{Ziegler spectrum, Krull--Gabriel dimension, Cohen--Macaulay module, $D$-infinity plane singularity}

\begin{abstract}
We will describe the Cohen--Macaulay part of the Ziegler spectrum of the $D_{\fty}$ plane singularity $S$ and
calculate the nilpotency index of the radical of the category of finitely generated Cohen--Macaulay $S$-modules.
\end{abstract}

\maketitle

\section{Introduction}\label{S-intro}

A local commutative noetherian ring is said to be of \emph{countable} (or \emph{discrete}) 
\emph{Cohen--Macaulay representation type} if it has only countably (infinitely) many non-isomorphic finitely 
generated maximal Cohen--Macaulay modules. For complete 1-dimensional surfaces the classification of \cite{BGS} 
shows that there are essentially two possibilities: $R= F[[x,y]]/(x^2)$, the $A_{\fty}$ plane singularity; and 
$S= F[[x,y]]/(x^2y)$, the $D_{\fty}$ plane singularity.

The model theory of Cohen--Macaulay $R$-modules was investigated in \cite{Pun16}. In particular the Cohen--Macaulay
part of the Ziegler spectrum of $R$, $\ZCM_R$, was described, and the nilpotency index of the radical of the
category of finitely generated Cohen--Macaulay modules was calculated. In this paper we will address similar
questions for the ring $S$. By factoring out $x^2$ we obtain a natural surjection $S\to R$, hence an embedding
from $\ZCM_R$ onto a closed subset of $\ZCM_S$. Our initial feeling was that the Cantor--Bendixson rank of points
in the former space will jump to higher values via this embedding (see \cite{LPP} or \cite{P-T14} for dealing
with this effect). However this is not the case --- we will show that the points preserve their ranks in the
ambient space; and the Cantor--Bendixson rank of $\ZCM_S$ equals 2.

From point of view of properties we will investigate in this paper, the category of Cohen--Macaulay modules over
$S$ is a 'double' of the corresponding category of $R$-modules. For instance the Krull--Gabriel dimension of the
definable category generated by finitely generated Cohen--Macaulay modules equals 2 for $R$ and for $S$. Further,
similarly to the $A_{\fty}$-case, one can glue from the Auslander--Reiten quiver of the category of finitely generated
Cohen--Macaulay $S$-modules the Ringel quilt, which is the M\"obius stripe.

This gives a global geometric structure to the above category including objects, but also morphisms, - the part
which is so often neglected. Using this realization we will show that the nilpotency index of the radical of this
category equals $\om\cdot 2$. Note that, by Schr\"oer \cite[Prop. 6.1]{Schro}, for finite dimensional algebras
this index cannot be equal to a limit ordinal.

To classify points of the space $\ZCM_S$, i.e. indecomposable pure injective Cohen--Macaulay modules, we will
employ the so-called 'interval method' which is based on Ziegler's results \cite{Zieg}. Namely each point in
this space is uniquely determined by a nontrivial cut it creates on a given interval in the lattice of positive
primitive formulae. Taking intervals means to take open sets covering the category of finitely generated
Cohen--Macaulay modules considered as a surface. We will show that two basic open sets are sufficient to detect
all points of this space and calculate their bases of open sets in Ziegler topology.

Due to many technicalities involved in proofs, we will be quite concise when executing them in detail. Say, the
description of irreducible morphisms between finitely generated Cohen--Macaulay modules over $S$ is known to experts
in this area, but no written account seems to exist. So we rely on 'believe or check' approach when explaining this
matter. Further, because the model theoretic part of this paper is similar to the $A_{\fty}$-case, which was
carefully explained in \cite{Pun16}, we will allow themselves to be quite sketchy on some instances.

The point of this paper is to propel the method rather than calculations. It proved to be successful for some
classes of finite dimensional algebras and some Cohen--Macaulay singularities. The reason for this is obvious: both
finite dimensional modules, and finitely generated Cohen--Macaulay modules over complete Cohen--Macaulay rings are
pure injective.

\section{The ring}\label{S-ring}

In this part we will introduce the object of our interest, the ring $S$, and mention its basic properties which 
will be used in the paper. Let $F$ be an algebraically closed field whose characteristic is different from 2, 
and let $S= F[[x,y]]/(x^2y)$ be the factor of the ring of power series $F[[x,y]]$ by the ideal generated by $x^2 y$,
the so-called \emph{$D_{\fty}$ plane singularity}. Many properties of $S$ can be extracted from fairly general
commutative algebra.

In particular, $S$ is a local complete 1-dimensional commutative noetherian ring with zero divisors. Namely
if $P$ is a prime ideal of $S$ then $x^2y= 0$ yields $x\in P$ or $y\in P$ (or both). It follows easily that
$P= xS$, $P= yS$ or $P= \mm= (x,y)$, the maximal ideal of $S$.

Note that there is a natural isomorphism $S/x^2S\cong R= F[[x,y]]/(x^2)$, therefore every $R$-module is an
$S$-module. Further $x^2S\cong S/yS\cong F[[x]]$, hence writing $F[[x]]$ or $F((x))$, the field of Laurent
power series, we mean the corresponding $S$-module. Also $xyS\cong S/xS\cong F[[y]]$, hence similar applies to
$F[[y]]$ and $F((y))$.

The ring theoretical properties of $R$ and its modules can be found in \cite{Pun16}, - we will use them freely.
For instance the quotient ring $Q_R$ of $R$ is a uniform module, and the integral closure $\wt R$ of $R$ in
$Q_R$ is a non-noetherian valuation ring of Krull dimension one. Further $\wt R$ can be identified with
$R+ xQ_R$.

Because $S$ is 1-dimensional, it suffices to invert any nonzero divisor, say $x+y$, to obtain its ring of quotients
$Q_S$, for instance $Q_S= S[(x+y)^{-1}]$. Namely $T= (x+y)^{-k}$, $k= 1, 2,\dots$ is a multiplicative closed set,
and the localization $ST^{-1}$ has only two prime ideals $xST^{-1}$ and $yST^{-1}$, both consisting
of zero divisors. It follows that each nonzero divisor in this ring is invertible, hence $ST^{-1}= Q_S$.

For basic facts in the theory of Cohen--Macaulay rings and modules see \cite{L-W} or \cite{Yosh}. We will use
the homological definition of this notion. Because $S$ is 1-dimensional, a finitely generated $S$-module $M$ is
said to be \emph{Cohen--Macaulay}, $\CM$, if $\Hom(F,M)=0$. Clearly this is the same as $\Soc(M)=0$, i.e.
$mx= my=0$ for $m\in M$ implies $m=0$. For instance the ring $S$ itself is \emph{Cohen--Macaulay}.

Furthermore $S$ is \emph{Gorenstein}, that is of injective dimension $1$. Namely $x+y$ is a nonzero divisor, and
the ring $S/(x+y)S\cong F[[x]]/(x^3)$ has simple socle, hence (see \cite[Prop. 21.5]{Eis}) is zero-dimensional
Gorenstein. Then $S$ is Gorenstein by \cite[Prop. 3.1.19 b)]{B-H}.

Also $S$ has uniform dimension $2$, therefore its injective envelope $I(S)$ is a decomposable module. In fact,
because $S$ is noetherian, by \cite[Prop. 2.9]{B-H}, we have the following injective resolution.

$$
0\to S\to I(S/xS)\oplus I(S/yS)\to I(S/\mm)\to 0\,.
$$

From $S/yS\cong F[[x]]$ and $x^2y=0$ it follows that $I(S/yS)$ is annihilated by $y$, hence $I(S/yS)\cong F((x))$.
Similarly $S/xS\cong F[[y]]$ implies that $x^2$ annihilates $I(S/xS)$, therefore $I(S/xS)\cong Q_R$, the quotient
ring of $R$.

Comparing this with the injective resolution $0\to S\to Q_S\to Q_S/S\to 0$ (see Matlis \cite[Thm. 13.1]{Mat}) we
conclude that $Q_S\cong F((x))\oplus Q_R$ and $Q_S/S\cong I(S/\mm)$, the minimal injective cogenerator in the
category of $S$-modules.

By $\wt S$ we will denote the integral closure of $S$ in $Q_S$. If $u= x(x+y)^{-1}$ then $u^2= u^3$, hence
$u\in \wt S$, and $e= u^2$ is an idempotent in $\wt S$. Now we can see the above decomposition of $Q_S$ more clearly.

\begin{lemma}\label{qs-dec}
$Q_S= x^2 Q_S\oplus yQ_S$. Further $x^2 Q_S= eQ_S\cong F((x))$ and $yQ_S= (1-e) Q_S\cong Q_R$.
\end{lemma}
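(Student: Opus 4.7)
The plan is to first exploit the idempotent $e=u^2$ in $Q_S$ to obtain an abstract direct sum decomposition, and then identify each summand explicitly.

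First, since $u^2=u^3$ is already noted in the text, one immediately checks that $e=u^2$ is an idempotent in $Q_S$. This yields the algebraic decomposition $Q_S=eQ_S\oplus (1-e)Q_S$, so it suffices to match $eQ_S$ with $x^2Q_S$ and $(1-e)Q_S$ with $yQ_S$. For the first equality, write $e=u^2=x^2(x+y)^{-2}$ to get $eQ_S\seq x^2Q_S$, and conversely $x^2=(x+y)^2 e$ to get $x^2Q_S\seq eQ_S$. For the second, rewrite $1-e=(1-u)(1+u)$ and use $1-u=y(x+y)^{-1}$ to see that $1-e\in yQ_S$, whence $(1-e)Q_S\seq yQ_S$; in the other direction, the relation $x^2y=0$ forces $ey=x^2y(x+y)^{-2}=0$, so $y=(1-e)y\in (1-e)Q_S$.

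Next I would identify the summands as $S$-modules. For $eQ_S=x^2Q_S$, the relation $x^2y=0$ says that $y$ annihilates this module, so $(x+y)$ acts on it as multiplication by $x$. Consequently $x^2Q_S=x^2 S[(x+y)^{-1}]$ coincides with the localization of the $S$-module $x^2S\cong F[[x]]$ at $x$, which is $F((x))$. For $yQ_S$, the relation $x^2y=0$ gives $\Ann_S(y)=x^2S$, so $yS\cong S/x^2S=R$, and hence $yQ_S\cong R[(x+y)^{-1}]$. It remains to identify this localization with $Q_R$.

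The main (mild) obstacle is the last identification $R[(x+y)^{-1}]\cong Q_R$, which I would settle by a prime-ideal count. The primes of $R=F[[x,y]]/(x^2)$ are exactly $xR$ and $\mm_R=(x,y)R$, and only the maximal one contains $x+y$; hence $R[(x+y)^{-1}]$ has the unique prime $xR[(x+y)^{-1}]$, whose elements are nilpotent (as $x^2=0$). Since the nonzero divisors of $R$ coincide with the elements outside the unique minimal prime $xR$, i.e.\ with the non-nilpotent elements, each of them is already a unit in $R[(x+y)^{-1}]$. Thus $R[(x+y)^{-1}]=Q_R$, completing the identification and the lemma.
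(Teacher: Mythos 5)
Your proof is correct and follows essentially the same route as the paper's: both start from the idempotent $e=u^2$ to get $Q_S=eQ_S\oplus(1-e)Q_S$, identify $eQ_S=x^2Q_S$ and $(1-e)Q_S=yQ_S$ by explicit calculation (the paper uses $1-e=y(2x+y)(x+y)^{-2}$ where you factor $1-e=(1-u)(1+u)$), and then identify the summands with $F((x))$ and $Q_R$. Your prime-ideal argument for $R[(x+y)^{-1}]=Q_R$ is simply a fuller justification of what the paper dispatches with the phrase ``by sending $y$ to $1$.''
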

\begin{proof}
Since $e\in Q_S$ is an idempotent, we have $Q_S= eQ_S\oplus (1-e)Q_S$. Clearly $eQ_S= x^2Q_S\cong F((x))$. Further
$1-e= y(2x+y)(x+y)^{-2}\in yQ_S$, therefore $(1-e)Q_S\seq yQ_S$. Also $ye=0$ implies $y\in (1-e)Q_S$, from which
the inverse inclusion follows.

Finally $yQ_S\cong Q_R$ by sending $y$ to $1$.
\end{proof}

We will consider $Q_S$ as the \emph{universal domain} which contains all objects of our interest. In particular
we identify $R_S$ with $yS\subset Q_S$; and $Q_R$, considered as an $S$-module, with $yQ_S$.

Recall that an \emph{overring} of $S$ is any subring of the integral closure $\wt S$ containing $S$. Since the
ideal  $\mm^2$ requires 3 generators, it follows (see \cite[Thm. 4.18]{L-W}) that $S$ is not \emph{Bass}, i.e.
there is an overring of $S$ which is not Gorenstein. We will pinpoint such an overring.

\begin{lemma}\label{s-pr}
There is the least proper overring $S'= \End_S(\mm)$ of $S$ generated over $S$ by $z= x^2(x+y)^{-1}$. Further $S'$
is a local non-Gorenstein ring which coincides with the endomorphism ring of $S'$ considered as an $S$-module.
\end{lemma}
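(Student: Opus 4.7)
The plan is to identify $S':=\End_S(\mm)$ explicitly as the subring $S+Sz$ of $Q_S$, and then to read off the remaining properties from the fact that $S'/S$ is an $F$-vector space of dimension one.

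First I would verify that $z$ lies in $\End_S(\mm)\cap\wt S$ but not in $S$. The key computations are $zx=x^2$ and $zy=0$ (using $x^2y=0$), which place $z\in\End_S(\mm)$; similarly $z^2=x^2\in S$, exhibiting $z$ as a root of the monic polynomial $T^2-x^2\in S[T]$ and so an element of $\wt S$. If $z$ were in $S$ then $zy=0$ would force $z\in\ann_S(y)=x^2S$, whence $zx\in x^3S$, contradicting $zx=x^2$. The relation $z^2=x^2$ then gives $S[z]=S+Sz$ as $S$-modules.

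The main technical step is $S'\subseteq S+Sz$: given $q\in Q_S$ with $q\mm\subseteq\mm$, use $q\cdot x^2y=0$ and $\ann_S(x^2)=yS$ to show $qy=\tilde d y$ for some $\tilde d\in S$, then compare $qx\cdot y=qy\cdot x$ via the relations $yx=xy$, $x^2y=0$ in $\mm$ to force $qx=\tilde d x+Hx^2$ for some $H\in S$; multiplication by $\tilde d+Hz\in S+Sz$ agrees with $q$ on the nonzerodivisor $x+y\in\mm$, hence $q=\tilde d+Hz$. Since $\mm z=x^2S\subseteq S$, the quotient $S'/S$ is annihilated by $\mm$ and generated by the class of $z$, so $S'/S\cong F$; in particular no ring strictly lies between $S$ and $S'$.

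That $S'$ is contained in every proper overring reduces to checking that the socle of $\wt S/S$ is the one-dimensional subspace $F\bar z$: for any $t\in T\setminus S$ one finds $s\in S$ with $st\in S'\setminus S$ -- a suitable power of $y$ clears a Laurent tail in the $Q_R$-coordinate of $t$, and $s=x$ handles the case when the two $Q_S$-coordinates of $t$ disagree at the origin -- so $st-s_0=\lambda z$ for some $\lambda\in F^{\times}$ and $s_0\in S$, placing $z\in T$. Locality is then immediate: $\mm':=\mm+Sz$ is the maximal ideal, with every $1+bz$ inverted by $(1-b^2x^2)^{-1}(1-bz)\in S'$ (using $z^2=x^2$), and I expect the verification of these socle and essentiality facts to be the main obstacle.

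For non-Gorensteinness I would pick the nonzerodivisor $x+y\in\mm'$; computing $(x+y)S'=(x+y)S+x^2S$ gives $S'/(x+y)S'$ the $F$-basis $\{1,x,z\}$, on which the generators of $\mm'$ kill $x$ and $z$, yielding a $2$-dimensional socle and Cohen--Macaulay type $2>1$. The identity $\End_S(S')=S'$ is routine: present $S'=S^2/\lan(0,y),(-x^2,x+y)\ran$, then any $\phi\in\End_S(S')$ with $\phi(1)=a+bz$ is forced by compatibility with the two relations (together with $zx=x^2$ and $zy=0$) to satisfy $\phi(z)=az+bx^2=(a+bz)z$, so $\phi$ is multiplication by $a+bz\in S'$.
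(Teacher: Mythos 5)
Your overall approach is a more explicit, computational version of the paper's argument, and most of it is correct and carefully worked out. You prove $\End_S(\mm)=S+Sz$ by hand (the $(x+y)$-cancellation trick is clean and correct), you observe $S'/S\cong F$ directly, you compute $S'/(x+y)S'$ and its $2$-dimensional socle in the same way the paper does, and your argument that $\End_S(S')=S'$ via the presentation $S^2/\lan(0,y),(-x^2,x+y)\ran$ is more detailed than the paper's one-line remark but arrives at the same conclusion. The locality verification is also fine.

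The gap is exactly where you flag it: the claim that $S'$ is the \emph{least} proper overring. The paper does not prove this at all; it invokes the ``rejection lemma'' (which the Kiev school attributes to the Gorenstein property of $S$): if a one-dimensional Gorenstein local ring is not regular, then $\End_S(\mm)$ is the unique minimal proper overring. You instead try to show that $S'/S$ is the essential socle of $\wt S/S$, but your description of how to find $s\in S$ with $st\in S'\setminus S$ (``a suitable power of $y$ clears a Laurent tail \dots $s=x$ handles the case when the two $Q_S$-coordinates disagree at the origin'') is only a gesture, not an argument, and $\wt S$ is not a finitely generated $S$-module here (the paper notes $\wt R$ is non-noetherian), so this is genuinely delicate. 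There is a clean way to close it in your framework: any $t\in T\sm S$ generates a finitely generated $S$-subalgebra $S[t]\subseteq T$ (since $t$ is integral), $S[t]/S$ is a nonzero finitely generated torsion $S$-module and hence has nonzero socle; that socle injects into $\Soc(\wt S/S)$, which you should check equals $S'/S$ using that $\mm t\subseteq S$ forces $t\mm\subseteq\mm$ (otherwise $\mm$ would be invertible, hence principal, contradicting that $S$ is not regular). Since $S'/S$ is simple, this puts $z\in S[t]\subseteq T$. Without this (or a citation of the rejection lemma), the minimality assertion is not proved.
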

\begin{proof}
Because $S$ is Gorenstein, it follows (this fact is called the rejection lemma in Kiev school) that $S$ has the
least proper overring $S'$. It can be identified with the endomorphism ring of $\mm$: $S'= S[z]$ where
$z= x^2(x+y)^{-1}\in \wt S$. For instance multiplication by $z$ defines the endomorphism of $\mm$ which sends $x$ to
$x^2$ and $y$ to zero.

In fact (see \cite[Exam. 14.23]{L-W}) $S'$ is isomorphic to the ring obtained from $F[[x,y,z]]$ by imposing
relations $z^2= zx=x^2$ and $yz=0$, the so-called \emph{$E_{\fty}$-singularity}. Clearly $S'$ is a local ring whose
Jacobson radical is generated by $x, y, z$.

Further $S'$ is no longer Gorenstein. Namely $x+y$ is a nonzero divisor in $S'$, and $S'/(x+y)S'$ is isomorphic to the
ring obtained from $F[[x,z]]$ by imposing relations $z^2= zx=x^2=0$. This ring has a $2$-dimensional socle, hence is
not Gorenstein, therefore the same holds true for $S'$.

Since the image of each $f\in \End_S(S')$ is uniquely determined by $f(1)\in S'$ it follows that $\End_S(S')$
coincides with $S'$, in particular $S'_S$ is an indecomposable module.
\end{proof}

Next we will describe the integral closure of $S$. Recall that $u= x(x+y)^{-1}\in \wt S$ and $e= u^2= x^2(x+y)^{-2}$
is an idempotent.

\begin{lemma}\label{int}
$\wt S= S[u]+ xy Q_S$.
\end{lemma}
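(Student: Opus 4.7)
The plan is to prove the two inclusions separately, using the orthogonal decomposition $Q_S= eQ_S\oplus (1-e)Q_S\cong F((x))\oplus Q_R$ from Lemma~\ref{qs-dec}. The easy inclusion $\wt S\sep S[u]+ xyQ_S$ rests on three points: $S\seq\wt S$ trivially; $u$ satisfies the monic equation $u^3- u^2= 0$, so $u\in\wt S$ and therefore $S[u]\seq\wt S$; and for any $q\in Q_S$ the element $xyq$ satisfies $(xyq)^2= y(x^2y)q^2= 0$, so it is integral over $S$ and $xyQ_S\seq\wt S$.

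For the reverse inclusion, I would take $\al\in\wt S$ and split $\al= e\al+ (1-e)\al$; both summands lie in $\wt S$ because $e= u^2$ and $1-e$ are in $\wt S$. Since the $S$-action on $eQ_S$ factors through $S/yS\cong F[[x]]$, the element $e\al$ is integral over the DVR $F[[x]]$ inside its quotient field $F((x))$, hence $e\al\in F[[x]]\cdot e\seq S\cdot u^2\seq S[u]$. For $(1-e)\al$, the relation $x^2(1-e)= 0$ ensures that the isomorphism $(1-e)Q_S\cong Q_R$ of Lemma~\ref{qs-dec} matches the $R$-structures, so $(1-e)\al$ corresponds to an element of the integral closure $\wt R= R+ xQ_R$ of $R$ in $Q_R$. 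Lifting back, the $R$-part yields $r(1-e)= r- ru^2\in S[u]$ for a lift $r\in S$, and the $xQ_R$-part is handled by the identity
$$
x(1-e)= xy(2x+y)(x+y)^{-2}\in xyQ_S,
$$
which exploits the explicit form $1-e= y(2x+y)(x+y)^{-2}$ already used in the proof of Lemma~\ref{qs-dec}. Summing the two parts gives $\al\in S[u]+ xyQ_S$.

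The main subtle step is this last conversion: the $x$-factor coming from $xQ_R$ on the normalization side of $R$ must be realized as an $xy$-factor on the $S$-side, and this is precisely what the displayed identity delivers by absorbing one $y$ into the idempotent $1-e$. A secondary point that will need a quick sanity check is that the $S$-module iso $(1-e)Q_S\cong Q_R$ transports integrality correctly, but this is automatic from $x^2(1-e)= 0$ combined with the fact that the iso restricts to the canonical surjection $S\to R$ on the ``constant'' subrings. Everything else amounts to straightforward bookkeeping in the two components of the decomposition.
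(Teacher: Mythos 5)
Your approach is a genuinely different route from the paper's. The paper handles the reverse inclusion by a direct element count: it observes that $x^k(x+y)^{-k}, y^k(x+y)^{-k}\in\wt S$, shows that $x^k(x+y)^{-l}$ and $y^k(x+y)^{-l}$ with $1\le k<l$ lie outside $\wt S$, and then rules out the general mixed expressions $(\sum_i\al_i x^i+\sum_j\be_j y^j)(x+y)^{-k}$. You instead reduce the question along the idempotent splitting $Q_S=eQ_S\oplus(1-e)Q_S$ to the known normalizations of $F[[x]]$ and of $R$. Both are correct; yours is the more structural argument and scales better, while the paper's is the more elementary (if fussier) case check.

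There is, however, one step you need to tighten. The isomorphism $(1-e)Q_S\cong Q_R$ from Lemma~\ref{qs-dec} sends $y\mapsto 1$ and is only an isomorphism of $S$-modules, \emph{not} of rings: for example $y\cdot y\mapsto y$ while $1\cdot 1=1$. Since integral closure is a ring-theoretic notion, this particular map does not carry $(1-e)\wt S$ onto $\wt R$ -- one can check it sends $(1-e)\wt S$ to $y^{-1}R+xQ_R\neq\wt R$ -- so the sentence ``$(1-e)\al$ corresponds to an element of $\wt R$'' does not follow from the cited lemma as stated. What you actually need (and what your closing computation with $r(1-e)$ and $x(1-e)=xy(2x+y)(x+y)^{-2}$ tacitly uses) is the \emph{ring} isomorphism $(1-e)Q_S\cong Q_S/x^2Q_S\cong R[(x+y)^{-1}]=Q_R$ obtained by restricting reduction modulo $x^2Q_S$; it sends $1-e\mapsto 1$ and carries $(1-e)S$ to $R$. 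You should also make explicit, via a short standard argument, that $(1-e)\wt S$ is the integral closure of $(1-e)S$ inside $(1-e)Q_S$. With these two points supplied, the ring isomorphism transports $\wt R=R+xQ_R$ back to $(1-e)S+xyQ_S\subseteq S[u]+xyQ_S$, which together with $e\wt S=u^2S\subseteq S[u]$ completes your argument. A parallel (small) caveat applies on the $e$-side: the identification $eQ_S\cong F((x))$ must also be taken as a ring isomorphism so that ``integral over $eS\cong F[[x]]$'' makes sense; that one is unproblematic since $e$ is the unit of $eQ_S$.
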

\begin{proof}
Clearly $xyQ_S\seq \wt S$, because this ideal is nilpotent, therefore $S[u]+ xyQ_S\seq\wt S$.

To prove the converse inclusion note that $u= x(x+y)^{-1}\in \wt S$ implies $x^k (x+y)^{-k}\in \wt S$;
and similarly $y^k(x+y)^{-k}\in \wt S$. Further clearly $x(x+y)^{-2}\notin S$. By multiplying this by $e$ we
conclude that $x^k(x+y)^{-l}$ is not in $\wt S$ as soon as $1\leq k< l$, and the same holds true for $y^k(x+y)^{-l}$.
It remains to show that each nonzero element $(\sum_{i<k} \al_i x^i+ \sum_{j< k}\be_j y^j)(x+y)^{-k}$ is not in
$\wt S$ if $k\geq 1$, but this is easily checked.
\end{proof}

From this we obtain the following decomposition of $\wt S_S$.

\begin{lemma}\label{st-dec}
$\wt S_S\cong e\wt S\oplus (1-e)\wt S$, where $e\wt S= eS$ is isomorphic to $x^2 S= F[[x]]$ and
$(1-e)\wt S= y^2(x+y)^{-1}S+ xy Q_S$ is isomorphic to $\wt R$ (as $S$-modules).
\end{lemma}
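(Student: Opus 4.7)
The plan is to split $\wt S$ via the idempotent $e$ and identify each summand by direct computation, using Lemma \ref{int} to bring $\wt S=S[u]+xyQ_S$ into play. The decomposition $\wt S=e\wt S\oplus(1-e)\wt S$ is immediate from $e^2=e$. All the calculations below hinge on two elementary facts: $u^2=u^3$ (equivalent to $x^2y=0$), hence $u^k=e$ for every $k\geq 2$; and $exy=x^3y(x+y)^{-2}=0$, again because $x^2y=0$.

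For $e\wt S$, I would first observe $eu^k=e$ for every $k\geq 0$, so $eS[u]=eS$, and second $e\cdot xyQ_S=0$, giving $e\wt S=eS$. The annihilator $\ann_S(e)$ equals $yS$: one direction follows from $ye=x^2y(x+y)^{-2}=0$, and for the other, $se=0$ forces $sx^2=0$ in $Q_S$, whence $s\in\ann_S(x^2)=yS$. Therefore $eS\cong S/yS\cong F[[x]]$, and the same identification of annihilators gives $x^2S\cong F[[x]]$.

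For $(1-e)\wt S$, I would run the parallel argument. From $(1-e)u=u-e=xy(x+y)^{-2}\in xyQ_S$ and $(1-e)u^k=(1-e)e=0$ for $k\geq 2$ one obtains $(1-e)S[u]\subseteq(1-e)S+xyQ_S$, and combined with $(1-e)\cdot xyQ_S=xyQ_S$ this yields $(1-e)\wt S=(1-e)S+xyQ_S$. The presentation with generator $y^2(x+y)^{-1}$ then follows by matching generators modulo $xyQ_S$, using that $y^2(x+y)^{-1}=y-xy(x+y)^{-1}$ and $(1-e)=(2xy+y^2)(x+y)^{-2}$ share the same class once one absorbs terms lying in $xyQ_S$.

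To identify $(1-e)\wt S$ with $\wt R$ as $S$-modules, I would transport through the isomorphism $yQ_S\cong Q_R$ of Lemma \ref{qs-dec} (which sends $y$ to $1$), under which $(1-e)\wt S\subseteq yQ_S$ injects into $Q_R$. The submodule $xyQ_S$ maps onto $xQ_R$, and the image of $y^2(x+y)^{-1}S$ produces, after an adjustment by a unit in $Q_R$ if needed, the natural copy of $R$ inside $Q_R$, so that the total image reconstructs $R+xQ_R=\wt R$. The main obstacle is this last step: pinning down the image of the generator in $Q_R$ explicitly (via $(x+y)^{-1}\equiv 1/y-x/y^2$ modulo $x^2$ and similar identities) and matching the resulting submodule with $\wt R$; the preceding reductions are routine manipulations with the relations $x^2y=0$ and $u^2=u^3$.
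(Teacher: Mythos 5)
Your treatment of $e\wt S$ is correct and parallels the paper; the annihilator argument $\ann_S(e)=yS=\ann_S(x^2)$ for the isomorphisms $eS\cong S/yS\cong x^2S\cong F[[x]]$ is a valid alternative to the paper's direct observation that multiplication by the non-zero-divisor $(x+y)^2$ carries $eS=x^2(x+y)^{-2}S$ onto $x^2S$. The reduction $(1-e)\wt S=(1-e)S+xyQ_S$ is also correctly obtained from Lemma \ref{int} and $u^2=u^3$, just as in the paper.

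The gap is the congruence you assert: $(1-e)$ and $y^2(x+y)^{-1}$ do \emph{not} lie in the same class modulo $xyQ_S$. From $(1-e)=(2xy+y^2)(x+y)^{-2}$ one may only discard the term $2xy(x+y)^{-2}\in xyQ_S$ and conclude $(1-e)\equiv y^2(x+y)^{-2}\pmod{xyQ_S}$; the difference $y^2(x+y)^{-2}-y^2(x+y)^{-1}=y^2(1-x-y)(x+y)^{-2}$ is a unit multiple of $y^2(x+y)^{-2}$, and $y^2(x+y)^{-2}\in xyQ_S$ would force $y\in xQ_S$ and hence $xy^2\in x^2yQ_S=0$, which is false. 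The paper's own proof works with $z'=y^2(x+y)^{-2}$, not with $y^2(x+y)^{-1}$; the exponent $-1$ in the lemma statement is evidently a misprint, and no amount of ``absorbing terms'' will make your version of the computation close. Your plan for the final identification (transporting through $yQ_S\cong Q_R$, $y\mapsto 1$) is sound in outline and agrees with the paper once the generator is corrected: $z'S+xyQ_S$ maps onto $y(x+y)^{-2}R+xQ_R$, and since $y(x+y)^{-2}$ is a unit of $Q_R$ congruent to $y^{-1}$ modulo $xQ_R$, this is $y^{-1}R+xQ_R=y^{-1}\wt R\cong\wt R$, which is exactly the content of the paper's terse claim that $z'\mapsto 1$ extends to an isomorphism onto $\wt R_S$.
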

\begin{proof}
Since $exy=0$ and $eu= e$, by Lemma \ref{int} we conclude that $e\wt S= eS$. Also $eS= x^2(x+y)^{-2}S$ is isomorphic
to $x^2S$ as an $S$-module.

Further $1-e= y(2x+y)(x+y)^{-2}$ equals $z'= y^2(x+y)^{-2}$ modulo $xyQ_S$. Since $z'u\in xyQ_S$, we obtain
$(1-e)\wt S= z'S+ xy Q_S$. It is easily checked that the map $z'\mapsto 1$ extends uniquely to an isomorphism from
this submodule onto $\wt R_S$.
\end{proof}

It will be easier to deal with another copy of $\wt R$ within $Q_S$. Namely is its easily shown that the submodule
$yS+ xy Q_S$ is isomorphic to $\wt R_S$ via the map $y\mapsto 1$. We will identify $\wt R_S$ with this submodule.

We extend the above definition of finitely generated Cohen--Macaulay modules to arbitrary modules. Namely
an $S$-module $M$ is said to be \emph{Cohen--Macaulay} if $\Hom(F,M)=0$, i.e. if $\Soc(M)=0$.

Clearly $\CM$-modules over $S$ form a \emph{definable class (category)}, i.e. (see \cite{CB98} or \cite{Kra}) this
class is closed with respect to direct products, direct limits and pure submodules. It is defined by the first
order sentence $\fa\, x\, (vx=0\wg vy=0\to v=0)$, where $v$ denotes a variable; therefore we obtain the
\emph{theory} $T_{CM}$ whose models are $\CM$-modules.

Recall (see \cite[Sect. 3.3.4]{Preb}) that there is a natural one-to-one correspondence between definable categories
of modules over any ring $U$ and first order theories of $U$-modules whose models are closed with respect to direct
products. We will use these terms as synonymous.

\section{Finitely generated $\CM$-modules}\label{S-fg}

In this section we recall the classification of indecomposable finitely generated $\CM$-modules over $S$. It can
be combined from \cite{Yosh, Schre, L-W}, for instance we will use notation of \cite[p. 78]{Yosh}. In particular
each indecomposable finitely generated $\CM$-module is isomorphic to either an ideal of $S$, or a submodule of
$S^2$. We will mention this realization, but prefer to define such modules using generators and relations.

\textbf{1)}. $S$, $A= x^2 S$, $B= yS$, $C= xyS$ and $D= xS$. Note that $Bx^2=Cx^2=0$, hence $B$ and $C$ are
$R$-modules.

\textbf{2)}. For each $k\geq 1$ the module $M_k$ is given by generators $m, n$ and relations $mx= ny^k$, $nx=0$;
for instance here is a diagram for $M_2$.

\vspace{3mm}

\begin{center}
$
M_2\hspace{10mm}
\xymatrix@C=14pt@R=10pt{%
*+={\circ}\ar[dd]_x\ar@{}+<0pt,10pt>*{_m}&&*+={\circ}\ar[ld]_y\ar[dd]^x\ar@{}+<0pt,10pt>*{_n}\\
&*+={\circ}\ar[ld]_(.3)y&\\
*+={\circ}&&*+={\bullet}\ar@{}+<10pt,0pt>*{_0}
}
$
\end{center}

\vspace{3mm}

It is easily seen that the map $m\mapsto y^{k+1}$ and $n\mapsto xy$ gives an isomorphism from $M_k$ onto the ideal
$(y^{k+1},xy)$ of $S$. Further $M$ is annihilated by $x^2$, hence is a $\CM$-module over $R$.

Also $M_k$ is isomorphic to the submodule of $Q_S$ generated by $m= y$ and $n= xy(x+y)^{-k}$.

\textbf{3)}. For each $k\geq 1$ the module $Y_k$ is given by generators $m, n$ and relations $mx= ny^k$, $nxy=0$;
here is a diagram for $Y_2$.

\vspace{3mm}

\begin{center}
$
Y_2\hspace{10mm}
\xymatrix@C=14pt@R=10pt{%
*+={\circ}\ar[dd]_x\ar@{}+<0pt,10pt>*{_m}&&*+={\circ}\ar[ld]_y\ar[d]^x\ar@{}+<0pt,10pt>*{_n}\\
&*+={\circ}\ar[ld]_(.25)y&*+={\circ}\ar[d]^y\\
*+={\circ}&&*+={\bullet}\ar@{}+<6pt,-4pt>*{_0}
}
$
\end{center}

\vspace{3mm}

For instance $Y_k$ is isomorphic to the ideal $(y^k,x)$ of $S$ via the map $m\mapsto y^k$ and $n\mapsto x$,
in particular $Y_1\cong \mm$.

\textbf{4)}. The module $X_k$, $k\geq 1$ is generated by $m, n$ with relations $mxy= ny^k$ and $nx=0$; here is
a diagram for $X_1$.

\vspace{3mm}

\begin{center}
$
X_1\hspace{10mm}
\xymatrix@C=14pt@R=10pt{%
*+={\circ}\ar[d]_x\ar@{}+<0pt,10pt>*{_m}&&*+={\circ}\ar[lldd]_y\ar[dd]^x\ar@{}+<0pt,10pt>*{_n}\\
*+={\circ}\ar[d]_y&&&\\
*+={\circ}&&*+={\bullet}\ar@{}+<6pt,-4pt>*{_0}
}
$
\end{center}

\vspace{3mm}

For instance $X_1$ is isomorphic to $S'_S$ via the map $m\mapsto 1$ and $n\mapsto x-z$.

Further $X_k$ is isomorphic to the submodule of $S^2$ generated by $m= (y^k,x)$ and $n= (xy,0)$; and can be
identified with the submodule of $Q_S$ generated by $m=1$ and $n= xy(x+y)^{-k}$.

\textbf{5)}. The module $N_k$, $k\geq 1$ is generated by $m, n$ with relations $mxy= ny^{k+1}$ and $nxy=0$;
here is the diagram for $N_1$.

\vspace{3mm}

\begin{center}
$
N_1\hspace{10mm}
\xymatrix@C=14pt@R=10pt{%
*+={\circ}\ar[d]_x\ar@{}+<0pt,10pt>*{_m}&&*+={\circ}\ar[lldd]_(.4){y^2}\ar[d]^x\ar@{}+<0pt,10pt>*{_n}\\
*+={\circ}\ar[d]_y&&*+={\circ}\ar[d]^y\\
*+={\circ}&&*+={\bullet}\ar@{}+<6pt,-4pt>*{_0}
}
$
\end{center}

\vspace{3mm}

For instance $N_k$ is isomorphic to the submodule of $S^2$ generated by $m= (y^k,x)$ and $n=(x,0)$.

The following is the $\AR$-quiver of the category of finitely generated $\CM$-modules over $S$ extrapolated from
Yoshino \cite[p. 78]{Yosh} and \cite[p. 25]{Schre}, except of the precise form of irreducible maps.

\vspace{3mm}

\begin{center}
$
\xymatrix@C=14pt@R=20pt{%
*+{A}\ar[rd]\ar@{--}[d]&*+{Y_1}\ar[l]\ar[r]\ar@{--}[d]\ar[ldd]&*+{M_1}\ar[dl]\ar[r]\ar@{--}[d]&
*+{Y_2}\ar[dl]\ar[r]\ar@{--}[d]&*+{M_2}\ar[dl]\ar[r]\ar@{--}[d]&*+{\dots}&&*+{D}\ar@/_{6pt}/[d]\\
*+{B}\ar[ur]&*+{X_1}\ar[l]\ar[r]&*+{N_1}\ar[lu]\ar[r]&*+{X_2}\ar[lu]\ar[r]&*+{N_2}\ar[ul]\ar[r]\ar@{--}[u]
&*+{\dots}&&*+{C}\ar@/_{6pt}/[u]\\
*+{S}\ar[ru]&&&&
}
$
\end{center}

\vspace{3mm}

The irreducible morphisms on this diagram resemble irreducible morphisms between string modules (see \cite{CB89})
given by graph maps: first factor, and then embed. They could be guessed and checked. Here is a complete list.

\textbf{1)}. $Y_k\to M_k$ is given by factoring out $nx$.

\vspace{2mm}

\textbf{2)}. $Y_k\to N_{k-1}$, $k\geq 2$ sends $m\mapsto my$ and $n\mapsto n$.

\vspace{2mm}

\textbf{3)}. $M_k\to Y_{k+1}$ sends $m\mapsto m$ and $n\mapsto ny$.

\vspace{2mm}

\textbf{4)}. $M_k\to X_k$ is given by $m\mapsto my$ and $n\mapsto n$.

\vspace{2mm}

\textbf{5)}. $X_k\to N_k$ is provided by $m\mapsto m$ and $n\mapsto ny$.

\vspace{2mm}

\textbf{6)}. $X_k\to M_{k-1}$, $k\geq 2$ is given by annihilating $mx- ny^{k-1}$.

\vspace{2mm}

\textbf{7)}. $N_k\to Y_k$ is provided by gluing $mx$ with $ny^k$.

\vspace{2mm}

\textbf{8)}. $N_k\to X_{k+1}$ is given by annihilating $nx$.

\vspace{4mm}

\begin{center}
\parbox{4cm}{%
$
N_k\hspace{5mm}
\xymatrix@C=14pt@R=10pt{%
*+={\circ}\ar[d]_x\ar@{}+<0pt,10pt>*{_m}&&*+={\circ}\ar[lldd]_(.3){y^{k+1}}\ar[d]^x\ar@{}+<0pt,10pt>*{_n}\\
*+={\circ}\ar[d]_y&&*+={\blacksquare}\ar[d]_y\\
*+={\circ}&&*+={\circ}\ar@{}+<6pt,-4pt>*{_0}
}
$
}
\hspace{-4mm}
$\Longrightarrow$
\hspace{3mm}
\parbox{4cm}{%
$
\xymatrix@C=14pt@R=10pt{%
*+={\circ}\ar[d]_x\ar@{}+<0pt,10pt>*{_m}&&*+={\circ}\ar[lldd]_(.3){y^{k+1}}\ar[dd]^x\ar@{}+<0pt,10pt>*{_n}\\
*+={\circ}\ar[d]_y&&&\\
*+={\circ}&&*+={\circ}\ar@{}+<6pt,-4pt>*{_0}
}
\hspace{-3mm} X_{k+1}
$
}
\end{center}

\vspace{4mm}

Here we show by the black square the element of the first module which is send to zero by this morphism.

\textbf{9)}. $B\to Y_1$ sends $y\mapsto m$, hence is the inclusion $yS\subset \mm$.

\vspace{2mm}

\textbf{10)}. $Y_1\to A$ is given by $n\mapsto x^2$ and $m\mapsto 0$. If we identify $Y_1$ with $\mm$ by setting
$m=y$ and $n=x$, then this map is given by multiplication by $z$.

\vspace{2mm}

\textbf{11)}. $X_1\to B$ is given by gluing $mx$ and $n$, hence by factoring out $z$ in $S'=X_1$.

\vspace{2mm}

\textbf{12)}. $S\to X_1$ sends $1\mapsto m$, hence is the inclusion $S\subset S'$.

\vspace{2mm}

\textbf{13)}. $Y_1\to S$ sends $m\mapsto y$ and $n\mapsto x$, hence is the inclusion $\mm\subset S$.

\vspace{2mm}

\textbf{14)}. $A\to X_1$ is given by sending $x^2\mapsto mx-n$.

\vspace{2mm}

\textbf{15)}. $C\to D$ is the inclusion and $D\to C$ is given by multiplication by $y$.

\vspace{2mm}

On Figure \ref{fig1} we show the unbridged version of the $\AR$-quiver, which includes the relations between
irreducible morphisms. We mark the finitely generated $\CM$-modules over $R$ by solid circles, - they comprise
roughly a quarter of the category of finitely generated $\CM$-modules over $S$.

\begin{figure}[t]
$$
\vcenter{%
\xymatrix@C=14pt@R=12pt{%
&&&&*+={\circ}\ar@{}+<0pt,10pt>*{_D}\ar@{}+<6pt,6pt>*{.}\ar@{}+<10pt,10pt>*{.}\ar@{}+<14pt,14pt>*{.}&&&&&\\
&&&*+={\bullet}\ar[ur]_{_{\subset}}\ar@{}+<0pt,10pt>*{_C}&&&&&&&\\
&&*+={\circ}\ar@{}+<0pt,10pt>*{_D}\ar[ur]_y&&&&
*+={\circ}\ar[dr]\ar@{}+<0pt,10pt>*{_{Y_3}}\ar@{}+<6pt,6pt>*{.}\ar@{}+<10pt,10pt>*{.}\ar@{}+<14pt,14pt>*{.}
\ar@{}+<-8pt,8pt>*{.}\ar@{}+<-14pt,14pt>*{.}\ar@{}+<-20pt,20pt>*{.}&&&&\\
&*+={\bullet}\ar@{}+<0pt,10pt>*{_C}\ar[ur]_{_{\subset}}&&&&
*+={\bullet}\ar[dr]\ar[ur]\ar@{}+<0pt,10pt>*{_{M_2}}\ar@{}+<-8pt,8pt>*{.}\ar@{}+<-14pt,14pt>*{.}\ar@{}+<-20pt,20pt>*{.}&&
*+={\circ}\ar[dr]\ar@{}+<0pt,10pt>*{_{N_2}}\ar@{}+<6pt,6pt>*{.}\ar@{}+<10pt,10pt>*{.}\ar@{}+<14pt,14pt>*{.}&&&&\\
*+={\circ}\ar@{}+<0pt,10pt>*{_D}\ar[ur]_y\ar@{}+<-6pt,-6pt>*{.}\ar@{}+<-10pt,-10pt>*{.}\ar@{}+<-14pt,-14pt>*{.}&&
&&*+={\circ}\ar[dr]\ar[ur]\ar@{}+<0pt,10pt>*{_{Y_2}}\ar@{}+<-8pt,8pt>*{.}\ar@{}+<-14pt,14pt>*{.}\ar@{}+<-20pt,20pt>*{.}
&&
*+={\circ}\ar[dr]\ar[ur]\ar@{}+<0pt,10pt>*{_{X_2}}&&
*+={\circ}\ar[dr]\ar@{}+<0pt,10pt>*{_{Y_2}}\ar@{}+<6pt,6pt>*{.}\ar@{}+<10pt,10pt>*{.}\ar@{}+<14pt,14pt>*{.}&&&\\
&&&*+={\bullet}\ar[dr]\ar[ur]\ar@{}+<0pt,10pt>*{_{M_1}}\ar@{}+<-8pt,8pt>*{.}\ar@{}+<-14pt,14pt>*{.}\ar@{}+<-20pt,20pt>*{.}&&
*+={\circ}\ar[dr]\ar[ur]\ar@{}+<0pt,10pt>*{_{N_1}}&&
*+={\bullet}\ar[dr]\ar[ur]\ar@{}+<0pt,10pt>*{_{M_1}}&&
*+={\circ}\ar[dr]\ar@{}+<0pt,10pt>*{_{N_1}}\ar@{}+<6pt,6pt>*{.}\ar@{}+<10pt,10pt>*{.}\ar@{}+<14pt,14pt>*{.}&&&\\
&&*+={\circ}\ar[dr]\ar[ur]\ar[r]\ar@{}+<0pt,10pt>*{_{Y_1}}
\ar@{}+<-10pt,-10pt>*{.}\ar@{}+<-14pt,-14pt>*{.}\ar@{}+<-6pt,-6pt>*{.}
\ar@{}+<-8pt,8pt>*{.}\ar@{}+<-14pt,14pt>*{.}\ar@{}+<-20pt,20pt>*{.}
&
*+={\circ}\ar[r]\ar@{}+<0pt,6pt>*{_S}&
*+={\circ}\ar[dr]\ar[ur]\ar@{}+<0pt,10pt>*{_{X_1}}&&*+={\circ}\ar[dr]\ar[ur]\ar[r]\ar@{}+<0pt,10pt>*{_{Y_1}}&
*+={\circ}\ar[r]\ar@{}+<0pt,6pt>*{_S}&*+={\circ}\ar[dr]\ar[ur]\ar@{}+<0pt,10pt>*{_{X_1}}&&
*+={\circ}\ar@{}+<0pt,10pt>*{_{Y_1}}\ar@{}+<6pt,0pt>*{.}\ar@{}+<10pt,0pt>*{.}\ar@{}+<14pt,0pt>*{.}&\\
&&&*+={\circ}\ar[ur]\ar@{}+<0pt,-10pt>*{_A}\ar@{}+<-6pt,0pt>*{.}\ar@{}+<-10pt,0pt>*{.}\ar@{}+<-14pt,0pt>*{.}&&
*+={\bullet}\ar[ur]\ar@{}+<0pt,-10pt>*{_B}&&
*+={\circ}\ar[ur]\ar@{}+<0pt,-10pt>*{_A}&&
*+={\bullet}\ar[ur]\ar@{}+<0pt,-10pt>*{_B}\ar@{}+<6pt,0pt>*{.}\ar@{}+<10pt,0pt>*{.}\ar@{}+<14pt,0pt>*{.}&&
}}
$$
\caption{}\label{fig1}
\end{figure}

\vspace{5mm}

For instance there are the following almost split sequences in the category of finitely generated $\CM$-modules.

$$
0\to M_k\to Y_{k+1}\oplus X_k\to N_k\to 0 \qquad \text{and} \qquad
0\to N_k\to Y_k\oplus X_{k+1}\to M_k\to 0\,.
$$

$$
0\to Y_k\to M_k\oplus N_{k-1}\to X_k\to 0,\ k\geq 2 \qquad \text{and} \qquad
0\to Y_1\to M_1\oplus S\oplus A\to X_1\to 0\,.
$$

Symmetrically we obtain the following $\AR$-sequences.

$$
0\to X_k\to M_{k-1}\oplus N_k\to Y_k\to 0,\ k\geq 2 \qquad \text{and} \qquad
0\to X_1\to N_1\oplus B\to Y_1\to 0\,.
$$

$$
0\to A\to X_1\to B\to 0\qquad \text{and} \qquad
0\to B\to Y_1\to A\to 0\,.
$$

Note that $S$ is projective and $\Ext$-injective in the category of finitely generated $\CM$-modules, hence no
almost split sequence starts or ends in $S$. Also neither $C$ nor $D$ is a source or sink of an $\AR$-sequence
in this category.

Thus the $\AR$-quiver consists of 2 components: the top one consisting of modules $C$ and $D$, and the bottom one.
To explain why $C$ or $D$ are located as shown on Figure \ref{fig1}, note that the composite morphism
$Y_{k+1}\to N_k\to Y_k$ is just the inclusion of ideals $(y^{k+1},x)\subset (y^k, x)$. The inverse limit (i.e.
the intersection) along the coray of morphisms going through the $Y_k$ in the northwestern direction equals
$xS= D$. This is why $D$ is positioned at the end of this coray.

Similarly the composite map $M_k\to X_k\to M_{k-1}$ is the inclusion of ideals $(y^{k+1},xy)\subset (y^k, xy)$,
therefore $C= xyS$ is the inverse limit of the coray of morphisms going through the $M_k$ in the northwestern
direction.

Note (see \cite[Thm. 21.21]{Eis} for a general statement) that there is a duality of the category of finitely
generated $\CM$-modules over $S$ given by the functor $\Hom(-, S)$. It is easily checked that this duality
interchanges $X_k$ with $Y_k$, but fixes the $M_k, N_k$ and $A, B, C, D, S$.

\section{Some basics in model theory}

In this section we will recall few notions from the model theory of modules which are relevant to this paper.
We will use Prest's book \cite{Preb} as a main source for references.

Suppose that $U$ is a commutative ring. A \emph{positive primitive formula} $\phi(v)$ in one free variable $v$ is
an existential formula $\ex\, \ov w\, (\ov w A=  v \bar b)$ in the language of $U$-modules, where
$\ov w= (w_1, \dots, w_k)$ is a tuple of bound variables, $A$ is a $k\times l$ matrix over $U$, and $\ov b$ is
a column of height $l$ with entries in $U$. If $M$ is an $U$-module and $m\in M$ then we say that
\emph{$M$ satisfies $\phi(m)$}, written $M\ms \phi(m)$, if there exists a tuple $\ov m= (m_1, \dots, m_k)$ of
elements in $M$ such that $\ov m A= m\ov b$ holds.

For instance, if $r\in U$, then the \emph{divisibility formula} $r\mid v\doteq \ex\, w\, (wr= v)$ is satisfied by an
element $m\in M$ iff $m\in Mr$. Similarly, the \emph{annihilator formula} $vs=0$ holds on $m\in M$ iff $ms=0$.
Let $\phi$ and $\psi$ be pp-formulae. We say that $\phi$ \emph{implies} $\psi$, written $\phi\to \psi$, if, for
each pair $m\in M$, from $M\ms \phi(m)$ it follows that $M\ms \psi(m)$. For instance the divisibility formula
$rs\mid v$ implies $s\mid v$.

The implication relation defines a partial ordering $\leq$ on the set of pp-formulas. We say that the pp-formulae
$\phi$ and $\psi$ are \emph{equivalent} if both implications $\phi\to \psi$ and $\psi\to \phi$ hold. If we
identify equivalent pp-formulas, the resulting poset is a modular lattice $L$ called the \emph{lattice of pp-formulae}
(in one variable) over $U$. For instance the zero formula $v=0$ is the least element in this lattice, and the
trivial formula $v=v$ is the largest element of $L$. Further the meet in this lattice is the conjunction of
pp-formulae, and the join is the sum of pp-formulae: $(\phi+ \psi)(v)\doteq \ex\, w\, (\phi(w)\wg \psi(v-w))$.

If $\phi$ is a pp-formula and $M$ is an $U$-module, then $\phi(M)= \{m\in M\mid M\ms\phi(m)\}$ is the
\emph{definable submodule} of $M$. For instance the divisibility formula $r\mid v$ defines the submodule $Mr$,
and the annihilator formula $vs=0$ defines the submodule consisting of $m\in M$ such that $ms=0$.
This way we will obtain the \emph{lattice of pp-definable submodules of $M$}, $L(M)$, which is the factor of
the lattice $L$ of pp-formulae by certain congruence relation.

The correspondence $M\mapsto \phi(M)$ defines the finitely generated subfunctor of the functor $\Hom(U,-)$
from the category of finitely presented $U$-modules to the category of abelian groups, in fact the pp-formulae
can be introduced as such additive functors.

There are 'minimal' realizations of pp-formulae, called free realizations. Namely we say that the pointed module
$(M, m)$ is a \emph{free realization of a pp-formula $\phi$} if 1) $M\ms\phi(m)$ and 2) if $M\ms \psi(m)$ for 
some pp-formula $\psi$, then $\phi$ implies $\psi$. Though minimal realization is never unique, each pp-formula
has a free realization $(M,m)$ such that $M$ is a finitely presented module. For instance $(U,r)$ is a free
realization of the divisibility formula $r\mid x$, and $\ov 1\in U/sU$ freely realizes the annihilator formula
$vs=0$. We will often write $m\in M$ to denote the pointed module $(M,m)$. Further each finitely presented pointed
module $m\in M$ is a free realization of some pp-formula $\phi$ (which generates the pp-type of $m$ in $M$, - see
definitions below).

If $M$ is finitely presented, then the above condition 2) means that for each pointed module $(N,n)$ satisfying
$\phi$ there is a \emph{pointed morphism} from $(M,m)$ to $(N,n)$, i.e. a morphism $f: M\to N$ such that
$f(m)= n$.

All this can be relativized to any definable class of modules. For instance let us consider the class of
Cohen--Macaulay modules over $S$. Some nonequivalent pp-formulae in $L$ get identified when restricted to
$\CM$-modules, hence we obtain the \emph{lattice} of $\CM$-formulae, $L_{CM}$, as a proper factor of $L$. The
following arguments show that this definable class is \emph{covariantly finite} (see \cite{H-R}), i.e. admits
finitely generated free realizations.

Take any pp-formulae $\phi$ over $S$ and choose a free realization $m\in M$ for this formula, where $M$ is a
finitely generated $S$-module. Factor out the socle of $M$ obtaining $M'= M/\Soc(M)$, then kick off the socle
of $M'$ and so on. Since $M$ is noetherian, in finitely many steps we will obtain a $\CM$-module $\ov M$ over
$S$. Let $\ov m$ denote the canonical image of $m$ in this module and let a pp-formula $\phi_{CM}$ generate the
pp-type of $\ov m$ in $\ov M$. Directly from the definition we obtain the following.

\vspace{2mm}

1) $\ov \phi$ implies $\phi$ in the theory of all $S$-modules.

\vspace{2mm}

2) $\ov \phi$ is equivalent to $\phi$ in the theory of $\CM$-modules over $S$.

\vspace{2mm}

We will call the formulae of the form $\phi_{CM}$ the \emph{$\CM$-formulae} and use pointed modules
$\ov m\in \ov M$ to denote them. Thus the lattice $L_{\CM}$ can be described as follows. It consists of equivalence
classes of pointed finitely generated $\CM$-modules over $S$, where $(M,m)\leq (N,n)$ if there exists a pointed
morphism from the latter module to the former. For instance the formula $x^2\in S$ implies $x\in S$, but these
formulae are not equivalent.

Further, the sum of $\CM$-formulas $m\in M$ and $n\in N$ is given by the pointed module $(m,n)\in M\oplus N$, and
their conjunction is defined using the following pushout.

\vspace{2mm}

\begin{center}
$
\xymatrix@C=16pt@R=24pt{%
*+{(R,1)}\ar[r]\ar[d]&*+{(M,m)}\ar@{-->}[d]&\\
*+{(N,n)}\ar@{-->}[r]&*+{(K,k)}\ar[r]^{\pi}&*+{(\ov K, \ov k)}
}
$
\end{center}

\vspace{3mm}

\section{The first interval}\label{S-cut1}

In this section we will create the first cut along the category $\CM_{fg}$ of finitely generated $\CM$-modules
over $S$, i.e.\ we calculate a particular interval in the lattice of $\CM$-formulae, which later will be used as
a net to catch indecomposable pure injective modules. For this we use the following notion borrowed (with some
adjustments) from Ringel \cite[Sect. 3]{Rin80}, where he calculates the hammock functors.

Suppose that $(M,m)$ is a pointed module, where $M$ is an indecomposable finitely generated $\CM$-module over $S$.
Consider the set $P(M)$ of all pointed modules $(N,n)$ such that $N$ is an indecomposable finitely generated
$\CM$-module, and there is a pointed morphism from $(M,m)$ to $(N,n)$. In particular if $(M,m)$ realizes a
pp-formula $\phi$ freely and $(N,m)$ is a free realization of $\psi$, then $\psi$ implies $\phi$.

We introduce the partial order on $P$ by setting $(N,n)\geq (K,k)$ if there is a pointed morphism from the former
module to the latter. Finally we factor this collection of pointed modules by the corresponding equivalence
relation ($\leq$\, and\, $\geq$) to obtain the \emph{pattern} of $M$. Thus $P(M)$ is a partially ordered set with
the least element (corresponding to the zero morphism), and the largest element $1_M$; however it is not clear
that this poset is a lattice.

We keep in mind the interpretation of this poset in terms of pp-formulae: if $(M,m)$ is a free realization of
$\phi$, then $P(M)$ consists of formulae in the interval $[v=0, \phi]_{CM}$ with indecomposable free realizations,
i.e. of formulae which are \emph{$+$-irreducible}. Thus the pattern is the 'frame' of the corresponding interval
in the lattice of $\CM$-formulae.

Because (see Section \ref{S-fg}) we know indecomposable finitely generated $\CM$-modules, the calculation of
patterns is straightforward. Here is an important example.

\begin{prop}\label{1-patt}
The diagram on Figure \ref{fig2} shows the pattern of the pointed module $(S,x^2)$.
\end{prop}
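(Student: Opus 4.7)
The plan is to unwind the definition of the pattern into a concrete enumeration. A pointed morphism $f\colon (S,x^2) \to (N,n)$ is determined, by cyclicity of $S$, by $n' = f(1) \in N$ subject only to the single compatibility $n' x^2 = n$. Hence $P(S,x^2)$ is identified, modulo pattern equivalence, with the collection of pairs $(N,n)$ where $N$ is an indecomposable finitely generated $\CM$-module and $n \in Nx^2$; the order is inherited from the existence of pointed homomorphisms.

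First I would prune the classification of Section~\ref{S-fg}. The modules $B$, $C$, and every $M_k$ are annihilated by $x^2$ (they all live in $R = S/x^2S$), so $Nx^2 = 0$ for them and they contribute only the least element. In the remaining five families $S$, $A$, $D$, $Y_k$, $X_k$, $N_k$ I would read off $Nx^2$ directly from the presentations and realizations in Section~\ref{S-fg}: in each case $Nx^2$ is either an $F[[x]]$-cyclic submodule (for $S$, $A$, $D$, $Y_k$, $X_k$) or a sum of two such (for $N_k$), generated by explicit elements such as $x^2$, $x^3$, $nx^2$, $mx^2$. Because $F$ is algebraically closed, any element of valuation $j$ in one of these $F[[x]]$-parts is pattern-equivalent to the normalised element $x^j$: a unit in $F[[x]]$ extends to an automorphism of the ambient $\CM$-module, preserving the pp-type. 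This collapses each $Nx^2$ to a discrete ladder of pointed elements.

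Next I would determine the order by comparing these normalised pointed modules pairwise. With the explicit generator-relation presentations (or the embeddings into $S$ and $S^2$) in hand, each test whether a morphism $N \to N'$ can send a given $n$ to a given $n'$ reduces to a divisibility/annihilator computation inside $S$. The list of irreducible morphisms recorded after Figure~\ref{fig1}, together with the composites along the rays and corays already identified in Section~\ref{S-fg}, furnishes all the comparisons needed: each candidate comparison either factors through one of these irreducibles or is ruled out by an annihilator obstruction in $S$. Assembling the resulting Hasse diagram and matching it with Figure~\ref{fig2} completes the verification.

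The main obstacle will be the book-keeping: five infinite families must be compared pairwise, and one has to be careful not to miss a comparison coming from a composite morphism, nor to introduce a spurious equivalence between two pointed modules that only accidentally share a pp-type. The explicit coordinate descriptions of $Y_k$, $X_k$, $N_k$ as submodules of $S$ or $S^2$ make each individual test mechanical, but ensuring that the enumeration is exhaustive and non-redundant is the delicate part.
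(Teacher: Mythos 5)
Your strategy is a legitimate direct enumeration and differs from the paper's route. The paper's proof exploits Auslander--Reiten theory: it traces covers in the poset by observing that any non-iso pointed morphism out of $(M,m)$ factors through the left almost split map $M\to\bigoplus M_i$, so that the image of $m$ under the almost split map identifies the immediate predecessors in the pattern. For instance, after $(S,x^2)$ covers $(X_1,mx^2)$ via the irreducible $S\to X_1$, the next cover is found by applying the almost split map $X_1\to N_1\oplus B$ and observing that $mx^2$ lands in the $N_1$-summand. This lets one walk down the pattern one step at a time without ever enumerating all pointed modules. Your approach instead reduces pointed morphisms from $S$ to elements of $Nx^2$ (correct, and a good observation), prunes the $R$-modules killed by $x^2$, normalises inside the remaining $F[[x]]$-cyclic pieces using units of $S$, and then proposes pairwise comparisons from the explicit presentations. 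That is sound in principle and more elementary since it does not require knowing the $\AR$-quiver in advance, but it is considerably more laborious, and you should be wary of one subtle point: you appeal to ``the irreducible morphisms and their composites along rays and corays'' as furnishing all needed comparisons, yet the pattern on Figure~\ref{fig2} contains relations realised only in the limit (for instance $(D,x^3)$ sits between the ray through the $X_k$ and the coray through the $Y_k$, and the corresponding morphism $S\xrightarrow{\cdot x}D$ is not a finite composite of irreducibles). Your pairwise-comparison scheme, done honestly by checking the existence of an $S$-linear map with the prescribed pointed data, does not actually need that factorisation heuristic and will find such morphisms; but the sentence as written suggests a reliance on a false premise. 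Drop that appeal and check existence directly from the coordinate descriptions, and the argument goes through.
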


\begin{figure}
$$
\vcenter{%
\xymatrix@C=12pt@R=14pt{%
&*+={\circ}\ar@{}+<20pt,0pt>*{_{(S,x^2)}}\ar[d]&\\
&*+={\circ}\ar@{}+<24pt,0pt>*{_{(X_1,mx^2)}}\ar[d]&\\
&*+={\circ}\ar@{}+<24pt,0pt>*{_{(N_1,mx^2)}}\ar[d]&\\
&*+={\circ}\ar@{}+<24pt,0pt>*{_{(X_2,mx^2)}}
\ar@{}+<0pt,-6pt>*{.}\ar@{}+<0pt,-12pt>*{.}\ar@{}+<0pt,-18pt>*{.}&\\
&*+={_{(D,x^3)}}\ar@{}+<0pt,8pt>*{}="s1"&\ar@{}+<-4pt,8pt>*{_{p_1}}="p1"\\
&*+={\circ}\ar@{}+<24pt,0pt>*{_{(Y_2,nx^2)}}\ar[d]
\ar@{}+<0pt,6pt>*{.}\ar@{}+<0pt,12pt>*{.}\ar@{}+<0pt,18pt>*{.}&\\
&*+={\circ}\ar@{}+<24pt,0pt>*{_{(N_1,nx^2)}}\ar[d]&\\
&*+={\circ}\ar@{}+<24pt,0pt>*{_{(Y_1,nx^2)}}\ar[dr]\ar[ld]&\\
*+={\circ}\ar@{}+<-20pt,0pt>*{_{(A,x^4)}}\ar[dr]&&*+={\circ}\ar@{}+<20pt,0pt>*{_{(S,x^3)}}\ar[dl]\\
&*+={\circ}\ar@{}+<24pt,-2pt>*{_{(X_1,mx^3)}}
\ar@{}+<0pt,-6pt>*{.}\ar@{}+<0pt,-12pt>*{.}\ar@{}+<0pt,-18pt>*{.}&\\
&*+={_{(D,x^4)}}\ar@{}+<0pt,8pt>*{}="s2"&\ar@{}+<-4pt,8pt>*{_{p_2}}="p2"\\
&*+={\circ}\ar@{}+<20pt,0pt>*{_{(N_,nx^3)}}\ar[d]\ar@{}+<0pt,6pt>*{.}\ar@{}+<0pt,12pt>*{.}\ar@{}+<0pt,18pt>*{.}&\\
&*+={\circ}\ar@{}+<20pt,2pt>*{_{(Y_1,nx^3)}}\ar[dl]\ar[dr]&\\
*+={\circ}\ar@{}+<-20pt,0pt>*{_{(A,x^5)}}\ar[dr]&&
*+={\circ}\ar@{}+<20pt,0pt>*{_{(S,x^4)}}\ar[dl]\\
&*+={\circ}\ar@{}+<24pt,-2pt>*{_{(X_1,mx^4)}}\ar@{}+<0pt,-6pt>*{.}\ar@{}+<0pt,-10pt>*{.}\ar@{}+<0pt,-14pt>*{.}\\
&_0\ar@{}+<0pt,8pt>*{}="s3"&\ar@{}+<-4pt,8pt>*{_q}="q"\\
\ar@{->}"p1"+<-6pt,0pt>;"s1"
\ar@{->}"p2"+<-6pt,0pt>;"s2"
\ar@{->}"q"+<-6pt,0pt>;"s3"
}}
$$
\caption{}\label{fig2}
\end{figure}

\begin{proof}
To calculate this pattern we should look at the image of $x^2$ with respect to morphisms $f$ from $S$ to
indecomposable finitely generated $\CM$-modules $M$.  Further we will identify morphism $f$ and $g$ such that
$f(x^2)$ and $g(x^2)$ are free realizations of equivalent pp-formulas. For example if $f(x^2)= 0$ then $f$
represent the zero element in the pattern. Also the irreducible morphism $S\to X_1$ which sends $1\mapsto m$ shows
that $(S, x^2)$ covers $(X_1, mx^2)$ in the pattern.

Note that there is an almost split morphism $f: X_1\to N_1\oplus B$, and the image of $mx^2$ via this map is
$(mx^2,0)$; hence $f$ is equivalent to the irreducible morphism $X_1\to N$ given by the first component of $f$.
It follows that $mx^2\in N_1$ is the largest formula strictly below $mx^2\in X_1$. The rest is by inspection.
\end{proof}

We have already noticed that Figure \ref{fig2} represents the frame of the interval below the formula $x^2\mid v$
in the lattice of $\CM$-formulae. We will recover the whole structure of this interval. First we will prove an
auxiliary lemma.

\begin{lemma}\label{dist}
The interval $[v=0, x^2\mid v]$ in the lattice $L_{CM}$ of all $\CM$-formulae over $S$ is distributive.
\end{lemma}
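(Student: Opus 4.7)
The approach is to combine modularity of the pp-lattice (automatic for any ring) with a direct analysis of $+$-irreducibles in the pattern of Figure~\ref{fig2}, ruling out any $M_3$ sublattice. By the classical theorem that a modular lattice is distributive iff it contains no copy of $M_3$, this will suffice.

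The plan proceeds in three stages. First, I would observe that the $+$-irreducible elements of the interval $[v=0,\, x^2\mid v]_{CM}$ are exactly those listed in Proposition~\ref{1-patt}: any $\CM$-formula in this interval admits a finitely generated $\CM$ free realization, which decomposes as a direct sum of indecomposables, expressing the original formula as a finite join of the pattern's $+$-irreducible formulae. Hence the interval is join-generated by the pattern.

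Second, I would examine the pattern combinatorially. It is a ``fence'' of finite chains glued at the branch pairs $\{(A,x^{k+3}),(S,x^{k+2})\}$, each such pair sitting strictly between $(Y_1,nx^{k+1})$ above and $(X_1,mx^{k+2})$ below. The key feature is that the pattern has width two: any antichain in it has at most two elements, and the only incomparable pairs are these branch pairs. I would then verify directly, using the almost split sequence $0\to Y_1\to M_1\oplus S\oplus A\to X_1\to 0$ from Section~\ref{S-fg} together with the pushout recipe for conjunction of $\CM$-formulae, that for each $k$ the identities $(A,x^{k+3})+(S,x^{k+2})=(Y_1,nx^{k+1})$ and $(A,x^{k+3})\wg(S,x^{k+2})=(X_1,mx^{k+2})$ hold in $L_{CM}$; hence each branch pair generates a Boolean $2\times 2$ sublattice rather than a copy of $M_3$.

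The main obstacle lies in the third stage: lifting this local Boolean behaviour to exclude $M_3$ globally. Given three pairwise incomparable elements $a,b,c$ of the interval forming a potential $M_3$ with common join $e$ and common meet $d$, each is a join of $+$-irreducibles from the pattern lying in $[d,e]$. Using modularity and the width-two structure, I would argue that any such triple admits a refinement to three pairwise incomparable $+$-irreducibles with the same meet and join, contradicting width two. Making this refinement precise — in particular, tracking how the modular identities reduce the $M_3$ configuration to one lying within a single branch pair — is the delicate point, but once carried out, distributivity of the entire interval follows.
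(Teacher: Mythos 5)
Your approach is genuinely different from the paper's. The paper does not do lattice combinatorics at all: it invokes a module-theoretic criterion (\cite[Prop.~4.4]{P-P}) reducing distributivity of the interval to the statement that, for each indecomposable finitely generated $\CM$-module $M$, the pp-definable subgroup $Mx^2$ is uniserial as a module over $\End(M)$. This is then checked case by case, the only nontrivial case being $N_k$, where one exhibits two particular endomorphisms and reads off the chain $Vmx^2\supset Vnx^2\supset Vmx^3\supset\cdots$ of endosubmodules. That route is short precisely because it bypasses any direct computation of joins or meets in $L_{CM}$.

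Your proposal has two concrete problems. First, in stage two the identity $(A,x^{k+3})+(S,x^{k+2})=(Y_1,nx^{k+1})$ is false: the paper explicitly records (in the paragraph following the lemma) that $(A,x^{k+1})+(S,x^k)$ is a new element \emph{strictly below} $(Y_1,nx^{k-1})$, and exhibits the simple interval between them. Moreover the worry that a branch pair might ``generate a copy of $M_3$'' is misplaced — a two-element antichain together with its join and meet can only span a $2\times2$ Boolean sublattice or a chain; $M_3$ needs three pairwise incomparable middle elements, so this verification buys nothing.

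Second, stage three — which you rightly identify as the crux — is left as a sketch. There is in fact a clean finish: in any lattice in which every element is a join of join-irreducibles and the join-irreducibles form a poset of width two, there is no $M_3$ sublattice (given $a,b,c$ pairwise incomparable with common meet $d$, pick a join-irreducible $a_0\leq a$ with $a_0\not\leq d$, likewise $b_0,c_0$; width two forces two of them comparable, say $a_0\leq b_0$, whence $a_0\leq a\wedge b=d$, contradiction). Combined with modularity this gives distributivity without the modular-identity gymnastics you anticipate. But this still rests on a hypothesis you have not verified: that the pattern formulae are actually join-irreducible in $L_{CM}$. Having an indecomposable free realization is not automatically the same as being join-irreducible; the paper derives the relevant ``only trivial implications'' statement from distributivity in the very next paragraph, so using it here would be circular. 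One can argue join-irreducibility from local endomorphism rings, but that argument needs to be supplied — and at that point the effort is comparable to simply invoking the \cite{P-P} uniseriality criterion, which is what the paper does.
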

\begin{proof}
Recall that a module is said to be \emph{uniserial} if its lattice of submodules is a chain.

By \cite[Prop. 4.4]{P-P} it suffices to prove that the formula $x^2\mid v$, when evaluated on an indecomposable
finitely generated $\CM$-module $M$ over $S$, defines a submodule which is uniserial as a module over the 
endomorphism ring of $M$. For instance, there is nothing to prove for $M_k$, since $M_k x^2=0$.

The most difficult case is $N_k$, - let $V= \End(N_k)$. Note that there exists an endomorphism of this module
sending $m\mapsto n$,  $n\mapsto 0$; and there is an endomorphism such that $m\mapsto my-n$, $n\mapsto mx$.
Using this we conclude that nonzero endosubmodules of $N_k$ contained in $N_k x^2$ form the following chain:
$V mx^2\supset V nx^2\supset Vmx^3\supset \dots$. 
\end{proof}

Now we are in a position to describe the lattice of $\CM$-formulae in the above interval. Since every finitely
generated $\CM$-module is a direct sum of indecomposable modules, it follows that each formula in this interval
is equivalent to a sum $\phi_1+ \dots+ \phi_n$ of formulae with free realization given by modules on
Figure \ref{fig2}. Of course if $\phi_i$ implies $\phi_j$ in this sum, the former formula can be skipped.
Because most formulas on the diagram are comparable, it remains to consider only what happens if we add,
say, $x^4\in A$ to $x^3\in S$.

This situation admits a very general analysis. By using distributivity (Lemma \ref{dist}) and the fact that the
endomorphism ring of each indecomposable $\CM$-module on the diagram is local, as in \cite[Prop. 4.3]{Pun16} we
derive the following. If $\phi= \sum_i \phi_i$ and $\psi= \sum_j \psi_j$, where $\phi_i$ and $\psi_j$ are
pp-formulae on the diagram, then $\phi$ implies $\psi$ only by the trivial reason: if for each $i$ there exists $j$
such that $\phi_i\leq \psi_j$ in the pattern.

Thus to get the interval $[v=0, x^2\mid v]$ from the pattern we need only insert additional formulas
$(A,x^{k+1})+ (S,x^k)$, $k\geq 3$ above each summand and strictly below the formula $(Y_1, nx^{k-1})$. For
instance we will obtain the following diagram below $(Y_1, nx^2)$, where the top interval is simple. It
corresponds to the projection $Y_1\to A\oplus S$ onto the first two coordinates of the left almost split
morphism $Y_1\to A\oplus S\oplus M_1$.
\vspace{3mm}

\begin{center}
$
\xymatrix@C=16pt@R=12pt{%
&*+={\circ}\ar@{-}[d]\ar@{}+<20pt,4pt>*{_{(Y_1,nx^2)}}&\\
&*+={\circ}\ar@{-}[ld]\ar@{-}[rd]\ar@{}+<32pt,3pt>*{_{(A,x^4)+(S,x^3)}}&\\
*+={\circ}\ar@{-}[rd]\ar@{}+<-20pt,0pt>*{_{(A,x^4)}}&&*+={\circ}\ar@{-}[ld]\ar@{}+<20pt,0pt>*{_{(S,x^3)}}\\
&*+={\circ}\ar@{}+<22pt,-6pt>*{_{(X_1,mx^3)}}&
}
$
\end{center}

\vspace{3mm}

It also follows that the conjunction of pp-formulas in this interval coincides with the intersection in the pattern.
Thus the pattern consists of $+$-irreducible elements of the interval, and is a lattice by itself, but the sums
in these lattices may be different.

\section{The Ziegler spectrum}\label{S-zieg}

In this section we classify points of the Cohen--Macaulay part of the Ziegler spectrum of $S$.

Let $U$ be a commutative ring. An inclusion of $U$-modules $M\seq N$ is said to be \emph{pure} if, for each
pp-formula $\phi$ and an element $m\in M$, from $N\ms \phi(m)$ it follows that $M\ms \phi(m)$. For instance
each injective module is pure in any module containing it. This definition is obviously extended to embedding
of modules.

A module $M$ is said to be \emph{pure injective} if it is injective with respect to pure embeddings. For instance
each injective module is pure injective, so as (see \cite{Rin00}) any module which is linearly compact over its
endomorphism ring. Since $U$ is commutative, each linearly compact module is pure injective.

The \emph{Ziegler spectrum of $U$}, $\Zg_U$, is a topological space whose points are indecomposable pure injective
modules. The topology on this space is given by \emph{basic open sets} $(\phi/\psi)$, where $\phi$ and $\psi$ are
pp-formulae. Here $(\phi/\psi)$ consists of points $M\in \Zg_S$ such that $\phi(M)$ is not contained in $\psi(M)$.
For instance if $r\in U$, then the pair $vr=0$ over $v=0$ defines the open set consisting of indecomposable pure
injective modules with $r$-torsion. Each basic open set is compact, in particular $\Zg_U$ is a quasi-compact
space which is usually non-Hausdorff.

Definable classes (categories) $\mD$ of $U$-modules can be identified with closed subsets of the Ziegler spectrum
whose points are indecomposable pure injective modules in $\mD$, and the topology is induced from the whole
spectrum. This way we obtain the Ziegler spectrum of this category. For instance, by considering the category
of Cohen--Macaulay $S$-modules we will obtain the \emph{Cohen-Macaulay part of the Ziegler spectrum} of $S$,
written $\ZCM_S$. To investigate this topological space is the main objective of this paper.

First we create a sufficient supply of points in this space.

\begin{lemma}
Each indecomposable finitely generated $\CM$-module over $S$ is pure injective, hence is the point of $\ZCM_S$.
Further the same holds true for $Q_R$, $\wt R$, and the generic modules $G_y= F((y))$, $G_x= F((x))$.
\end{lemma}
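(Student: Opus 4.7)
The statement has four groups of modules --- indecomposable finitely generated $\CM$-modules, the injective hulls $Q_R$ and $G_x = F((x))$, the generic module $G_y = F((y))$, and the integral closure $\wt R$ --- which I would handle separately. The finitely generated case is the quickest: because $S$ is complete local noetherian, every f.g.\ $S$-module $M$ is $\mm$-adically separated and complete and realized as $M \cong \lim_n M/\mm^n M$, an inverse limit of finite-length modules. Such an inverse limit is linearly compact over $S$, and the paragraph immediately preceding the lemma records that linearly compact modules over a commutative ring are pure injective.

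For $Q_R$ and $F((x))$ the conclusion is immediate from Section~\ref{S-ring}, where these are identified with $I(S/xS)$ and $I(S/yS)$ respectively, both injective $S$-modules. For $F((y))$ and $\wt R$ I would invoke the principle that pure injectivity transfers along a ring surjection $S \twoheadrightarrow T$: for a $T$-module $M$, every $S$-pp-formula reduces modulo the kernel to a $T$-pp-formula defining the same subgroup of $M$, so $T$-pure-injectivity implies $S$-pure-injectivity. The $S$-action on $F((y))$ factors through $S/xS \cong F[[y]]$, and the fraction field of the $1$-dimensional Gorenstein ring $F[[y]]$ is injective (it is the middle term of the minimal injective resolution $0 \to F[[y]] \to F((y)) \to E(F) \to 0$), hence pure injective. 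Similarly $\wt R \subset Q_R$ is annihilated by $x^2$ and so is an $R$-module, and I would cite \cite{Pun16} for its pure injectivity as $R$-module, then transfer back to $S$.

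Indecomposability in each case follows from a local endomorphism ring --- e.g.\ $\End_S(F((y))) = F((y))$ (a field), and the injective hulls $Q_R$ and $F((x))$ are indecomposable as injective hulls of uniform modules --- and the $\CM$ property is a one-line socle check showing no nonzero element is annihilated by both $x$ and $y$. The main obstacle is the case of $\wt R$. Unlike the other modules, it is neither finitely generated nor an injective hull of a simple; in fact the short exact sequence $0 \to xF((y)) \to \wt R \to F[[y]] \to 0$ does not split as $R$-modules, so $\wt R$ is not even injective over $R$. Its pure injectivity rests on the more delicate analysis of \cite{Pun16}, essentially recognizing $\wt R$ as a maximal (linearly compact) valuation ring.
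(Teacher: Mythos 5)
Your proof is correct, and it is organized somewhat differently from the paper's. The paper's own proof is a one-liner: it asserts that every module in the list is linearly compact (citing \cite{Pun16} only for the $R$-modules among them) and then invokes the remark, recorded just before the lemma, that linearly compact modules over a commutative ring are pure injective. You use linear compactness only for the finitely generated modules (via $\mm$-adic completeness of f.g.\ modules over the complete local ring $S$), and you then peel off $Q_R$ and $G_x = F((x))$ by observing from Section~\ref{S-ring} that they are literally the injective $S$-modules $I(S/xS)$ and $I(S/yS)$, and treat $G_y$ and $\wt R$ by transferring pure injectivity along the surjections $S \twoheadrightarrow F[[y]]$ and $S \twoheadrightarrow R$. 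All of these steps are sound, and the transfer principle you invoke (pure injectivity of a $T$-module descends to $S$ along a surjection $S \twoheadrightarrow T$) is standard. The trade-off is that the paper's uniform linear-compactness argument is slightly more efficient, since it dispenses with the case split, while yours makes visible exactly which identification or which prior result is doing the work in each case; in particular you correctly isolate $\wt R$ as the one module whose pure injectivity genuinely requires the nontrivial valuation-ring analysis of \cite{Pun16} rather than an injectivity or completeness argument. Your supplementary remarks on indecomposability (via local endomorphism rings or uniformity of injective hulls) and on the $\CM$ property (socle check) fill in details the paper leaves implicit but does not misstate anything.
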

\begin{proof}
It is not difficult to check that each module in the above list is linearly compact, which implies pure injectivity.
For instance for $R$-modules in this list, hence for $B, C, M_k$ and $Q_R, \wt R, G_y$, this follows from
\cite{Pun16}.
\end{proof}

We need to add one more module to this list. If we identify the module $X_k$ with the submodule of $Q_S$ generated
by $1$ and $n_k= xy(x+y)^{-k}$, $k=1, \dots$, then these modules form the natural ray of inclusions, via
$n_k\mapsto n_{k+1} y$, going in the northeastern direction on Figure \ref{fig1}. The direct limit along this ray
will give us the $\CM$-module $N$, which is the submodule of $Q_S$ generated by $1$ and the $n_k$.

\vspace{4mm}

\begin{center}
$
N\hspace{10mm}
\xymatrix@C=20pt@R=20pt{%
*+={\circ}\ar[d]_x\ar@{}+<0pt,10pt>*{_1}&&*+={\circ}\ar[dl]_y\ar@{}+<36pt,0pt>*{_{n_2= xy(x+y)^{-2}}}
\ar@{}+<10pt,10pt>*{.}\ar@{}+<15pt,15pt>*{.}\ar@{}+<20pt,20pt>*{.}\\
*+={\circ}\ar[d]_y&*+={\circ}\ar[dl]^y\ar@{}+<36pt,0pt>*{_{n_1= xy(x+y)^{-1}}}&\\
*+={\circ}\ar@{}+<16pt,-6pt>*{_{n_0= xy}}&&
}
$
\end{center}

\vspace{3mm}

\begin{lemma}\label{n}
$N$ is indecomposable and pure injective as an $S$-module, hence is a point in $\ZCM_S$.
\end{lemma}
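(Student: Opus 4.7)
The plan is two-pronged: indecomposability will come from identifying $\End_S(N)$ with $N$ itself as a local commutative ring, while pure-injectivity will follow from verifying linear compactness of $N$ in a suitable linear topology. The starting point for both is that $N$ sits inside the commutative ring $Q_S$, so inherits a commutative subring structure.

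First I would isolate the submodule $V= \{v\in N\mid vx= 0\}$. Using the relations $n_k y= n_{k-1}$ for $k\geq 1$ and $n_0 y= xy^2$, which hold inside $Q_S$ because $x^2y=0$, one sees that $V$ is the directed union $\bigcup_k n_k F[[y]]$ of cyclic free $F[[y]]$-modules, with transition maps given by multiplication by $y$; consequently $V\cong F((y))$ as an $S$-module (with $x$ acting trivially). The quotient $N/V$ is cyclic on the class of $1$ with annihilator $xyS$, which yields the short exact sequence $0\to F((y))\to N\to S/xyS\to 0$.

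Next I would show $\End_S(N)\cong N$ as commutative rings. The decisive observations are that $V\cdot V= 0$ inside $Q_S$ (since $n_kn_j= x^2y^2(x+y)^{-k-j}= 0$) and that $y$ acts invertibly on $V\cong F((y))$. These imply that every $S$-endomorphism $f$ of $N$ is determined by $f(1)$, and conversely that for each $c\in N$ the multiplication map $v\mapsto cv$ (computed in $Q_S$) preserves $N$ and defines an $S$-endomorphism. Writing $c= s+v$ with $s\in S\cdot 1$ and $v\in V$, the identity $(s+v)(s^{-1}- s^{-2}v)= 1$ (valid because $v^2=0$) shows $c$ is a unit whenever $s\notin\mm$; if instead $s\in\mm$ then the image of $c$ in $N/V\cong S/xyS$ is a non-unit. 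Hence $\mm\cdot 1+ V$ is the unique maximal ideal of $N$, so $\End_S(N)$ is local and $N$ is indecomposable.

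For pure-injectivity I would equip $N$ with the linear topology generated by the submodules $\mm^k\cdot 1+ y^k F[[y]]\cdot n_0$, $k\geq 0$. Their intersection is zero, so the topology is Hausdorff, and the induced topologies on $V$ and on $N/V$ are respectively the standard linearly compact topology on $F((y))$ (the completion of the valuation subring $F[[y]]\cdot n_0$) and the $\mm$-adic topology on the complete local ring $S/xyS$. Linear compactness of $N$ will then follow by a filtered-inverse-limit argument along the short exact sequence, giving pure-injectivity because the chosen topology refines the pp-topology. The main obstacle is precisely this last step: extensions of linearly compact modules need not be linearly compact, so I must verify that the topology on $N$ restricts to and projects onto the desired topologies on the ends of the exact sequence, with no undesirable mixing between the $S\cdot 1$ and $V$ components.
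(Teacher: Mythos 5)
Your argument for indecomposability is essentially what the paper gestures at with ``it is straightforward to check that $\End_S(N)$ is $N$ itself, which is a local ring'': you correctly identify $V=N'=xyQ_S\cong F((y))$, compute $v^2=0$ for $v\in V$, reduce every endomorphism to multiplication by an element of $N$, and use the split into $s+v$ with $s^{-1}-s^{-2}v$ as an explicit inverse when $s\notin\mm$. The short exact sequence $0\to F((y))\to N\to S/xyS\to 0$ is exactly the paper's $0\to N'\to N\to D\to 0$, since $D=xS\cong S/xyS$.

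Your stated obstacle for pure injectivity, however, is not an obstacle. In the discrete topology (which is the relevant one here), linearly compact modules \emph{are} closed under extensions; this is classical, going back to Zelinsky (\emph{Linearly compact modules and rings}, Amer.\ J.\ Math.\ 75 (1953)) and recorded e.g.\ in Jensen--Lenzing or Prest's book. So there is no need to construct and track a specific linear topology through the exact sequence: once you know $N'\cong G_y=F((y))$ is linearly compact (a uniserial module complete in its natural filtration) and $N/N'\cong D=xS$ is linearly compact (a finitely generated module over the complete noetherian local ring $S$), extension-closure gives linear compactness of $N$ immediately, hence pure injectivity. That is precisely the paper's one-line argument; the elaborate topology you propose and the worry about ``undesirable mixing'' can simply be dropped.
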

\begin{proof}
Let $N'$ be the submodule of $N$ generated by the $n_k$, hence $N'= xyQ_S$. Clearly $N'$ is isomorphic to $G_y$,
hence uniserial. Further $N'$ is a linearly compact module, so as the factor module $N/N'\cong D=xS$. It follow
that $N$ is linearly compact, hence pure injective.

It is straightforward to check that the endomorphism of $N_S$ is $N$ itself, which is a local ring. Thus $N_S$ is
an indecomposable module, hence is a point in $\ZCM_S$.
\end{proof}

The following property is similar to what is known for finite dimensional algebras (see \cite[Cor. 5.3.37]{Preb}).

\begin{lemma}\label{dense}
Finitely generated points are dense in the $\CM$-part of the Ziegler spectrum of $S$.
\end{lemma}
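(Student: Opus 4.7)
The plan is to verify density by showing that each nonempty basic open set $(\phi/\psi)$ in $\ZCM_S$ contains a finitely generated indecomposable $\CM$-module. By the preceding lemma each such module is already a point of $\ZCM_S$, so this is exactly what is required.

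First I would reduce to the $\CM$-theory. Since $(\phi/\psi)$ is nonempty, there exists some $N\in\ZCM_S$ and an element $n\in N$ with $N\ms\phi(n)$ and $N\not\ms\psi(n)$. Replace $\phi$ by its associated $\CM$-formula $\phi_{CM}$, which by the construction recalled in Section 4 is equivalent to $\phi$ on all $\CM$-modules and implies $\phi$ generally. Pick a free realization $(\ov M,\ov m)$ of $\phi_{CM}$ with $\ov M$ a finitely generated $\CM$-module; this is possible because the definable class of $\CM$-modules is covariantly finite, as observed earlier.

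Next I would argue that $\ov M\not\ms\psi(\ov m)$. Indeed, since $N$ is a $\CM$-module and $N\ms\phi(n)$, we have $N\ms\phi_{CM}(n)$; the free realization property then produces a pointed morphism $f\colon (\ov M,\ov m)\to (N,n)$. If $\ov M\ms\psi(\ov m)$ held, then applying $f$ would force $N\ms\psi(n)$, contradicting the choice of $n$.

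Finally, because $S$ is a complete local noetherian ring, Krull--Schmidt holds for finitely generated $S$-modules, so we decompose $\ov M=\bigoplus_{i=1}^{k}M_i$ into indecomposable finitely generated $\CM$-summands and write $\ov m=\sum_i m_i$ with $m_i\in M_i$. Pp-formulae commute with finite direct sums, so $\phi(\ov M)=\bigoplus_i\phi(M_i)$ and $\psi(\ov M)=\bigoplus_i\psi(M_i)$. Consequently each $m_i\in\phi(M_i)$, and since $\ov m\notin\psi(\ov M)$ there is some index $j$ with $m_j\notin\psi(M_j)$. Then $M_j$ is a finitely generated indecomposable $\CM$-module lying in $(\phi/\psi)$, which completes the proof. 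The only potentially delicate point is knowing that free realizations of $\phi$ can be chosen inside the class of finitely generated $\CM$-modules, but this is exactly the content of the $\CM$-formula reduction already set up in Section 4; the remaining steps are formal.
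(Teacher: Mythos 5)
Your argument is correct, but it takes a genuinely different route from the paper's. The paper dispatches density in one line: every $\CM$-module over $S$ is the directed union of its finitely generated submodules, each of which is again $\CM$; since pp-definable subgroups commute with direct limits, any nonempty basic open set $(\phi/\psi)$ hit by some $\CM$-module is already hit by one of its finitely generated $\CM$-submodules, and Krull--Schmidt then produces a finitely generated indecomposable in $(\phi/\psi)$. You instead invoke the covariant-finiteness apparatus from Section 4: you pass to the $\CM$-formula $\phi_{CM}$, take its finitely generated $\CM$ free realization $(\ov M,\ov m)$, and use the pointed-morphism property to show $\ov M$ genuinely opens the pair before decomposing by Krull--Schmidt. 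Both routes are sound; the paper's direct-limit argument is shorter and needs only the elementary fact that finitely generated submodules of a $\CM$-module are $\CM$, whereas yours leans on the free-realization machinery the paper developed for other purposes. Either way the conclusion stands, and you correctly noted the reliance on the preceding lemma to know that finitely generated indecomposable $\CM$-modules are actually points of $\ZCM_S$.
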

\begin{proof}
Clearly each $\CM$-module over $S$ is a direct limit of its finitely generated submodules, which are
Cohen--Macaulay, hence the result follows.
\end{proof}

To approach the classification of points in $\ZCM_S$ we need more explanations. Let $M$ be a module over a
commutative ring $U$, and let $m$ be a (usually nonzero) element of $M$. The \emph{positive primitive type}
$p= pp_M(m)$ consists of all pp-formulae which are satisfied by $m$ in $M$. Clearly this set form a \emph{filter},
i.e.\ it is closed with respect to finite conjunctions and implications. The converse is also true: for each
filter $p$ in the lattice of pp-formulae there exists a module $M$ and an element $m\in M$ such that $p=pp_M(m)$.
There always exists a minimal pure injective module $M$ realizing $p$, which is unique up to an isomorphism over
the realization. This module is called the \emph{pure injective envelope of $p$}, written $\PE(p)$.

We say that a pp-type $p$ is \emph{indecomposable} if its pure injective envelope is an indecomposable module.
There is an important syntactical criterion which recognizes the indecomposability of a type. By $p^-$ we will
denote the complement of $p$, i.e.\ the collection of pp-formulae which are not in $p$. For an indecomposable
pp-type this collections is almost a cofilter.

\begin{fact}\label{indec-crit}(see \cite[Thm. 4.4]{Zieg})
A pp-type $p$ is indecomposable iff for each $\phi_1, \phi_2\in p^-$ there exists $\phi\in p$ such that
$(\phi_1\wg \phi)+ (\phi_2\wg \phi)\in p^-$.
\end{fact}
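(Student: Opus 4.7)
The plan is to prove the two directions separately, with the converse being the subtler one.

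For the direction $(\Leftarrow)$ I would argue by contradiction. Assume the combinatorial criterion on $p$ holds and suppose $\PE(p)=N_1\oplus N_2$ is a nontrivial decomposition. Write the canonical realization $m$ of $p$ as $m=m_1+m_2$ with $m_i\in N_i$. Since pp-formulae are preserved under projections (they are existential positive), $pp(m_i)\sep p$ for $i=1,2$. By the minimality property of the pure injective envelope, if $pp(m_1)$ were equal to $p$ then $\PE(p)$ would purely embed into $N_1$, forcing $N_2=0$; so $pp(m_i)\supsetneq p$, and we may pick $\phi_i\in pp(m_i)\sm p$. Applying the criterion to this pair yields $\phi\in p$ with $(\phi_1\wg\phi)+(\phi_2\wg\phi)\notin p$. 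But $m_1$ satisfies $\phi_1\wg\phi$ and $m_2$ satisfies $\phi_2\wg\phi$, so $m=m_1+m_2$ satisfies their sum, contradicting that sum being outside $p$.

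For the direction $(\Ra)$ I would again go by contrapositive: assume $\phi_1,\phi_2\in p^-$ witness the failure of the criterion, so $(\phi_1\wg\phi)+(\phi_2\wg\phi)\in p$ for every $\phi\in p$. For each such $\phi$ this means one can decompose $m=a_\phi+b_\phi$ inside $\PE(p)$ with $a_\phi\models\phi_1\wg\phi$ and $b_\phi\models\phi_2\wg\phi$. The key step is to replace this $\phi$-dependent family by a single simultaneous decomposition $m=a+b$, so that $a$ realizes the filter generated by $\phi_1\cup p$ and $b$ realizes the filter generated by $\phi_2\cup p$. Once that is achieved, each of $pp(a),pp(b)$ strictly refines $p$, the pure injective envelopes of $a$ and $b$ sit inside $\PE(p)$ as pure (hence split) submodules, and their sum contains $m=a+b$. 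By the minimality of $\PE(p)$ over $m$, one deduces $\PE(p)=\PE(pp(a))\oplus\PE(pp(b))$ with both summands nonzero, contradicting indecomposability.

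The main obstacle is the ``single pair of witnesses'' step. The pp-2-type in variables $(u,v)$ consisting of the equation $u+v=m$ together with all $\phi_1\wg\phi$ (in $u$) and $\phi_2\wg\phi$ (in $v$), as $\phi$ ranges over $p$, is finitely realized in $\PE(p)$ precisely by the hypothesis; one therefore needs the fact that pure injective modules realize every finitely realized pp-type (over parameters) --- the appropriate saturation/compactness property of pure injectives. This is a standard but nontrivial ingredient, and it is the step one must quote carefully from the general theory, e.g.\ from \cite{Zieg} or \cite[Sect. 4.2]{Preb}.
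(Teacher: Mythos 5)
The paper does not prove this statement: it is quoted as a Fact from Ziegler's Theorem 4.4, so there is no in-paper argument to compare against, and your proof has to stand on its own.

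Your $(\Leftarrow)$ direction is essentially sound. From a nontrivial decomposition $\PE(p)=N_1\oplus N_2$ you write $m=m_1+m_2$, observe $pp(m_i)\supseteq p$, and argue $pp(m_i)\supsetneq p$; then the criterion applied to witnesses $\phi_i\in pp(m_i)\sm p$ gives a contradiction since $m=m_1+m_2$ realizes $(\phi_1\wg\phi)+(\phi_2\wg\phi)$. The only step needing a touch more care is ``$pp(m_1)=p\Rightarrow N_2=0$'': the cleanest justification is that the idempotent $\iota_1\pi_1$ would then be an endomorphism of $\PE(p)$ preserving the pp-type of $m$, hence a split mono on the hull of $m$, hence the identity, forcing $N_2=0$. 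The substance of what you wrote is right.

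In the $(\Ra)$ direction there is a genuine gap at the final step. The compactness step is correct and is one of the two key ideas: the pp-2-type over the parameter $m$ (containing $u+v=m$, $\phi_1(u)$, $\phi_2(v)$ and $\phi(u),\phi(v)$ for $\phi\in p$) is finitely realized by the failure of the criterion, and pure injectivity of $\PE(p)$ realizes it, giving a simultaneous pair $a,b$ with $m=a+b$, $pp(a)\supseteq p\cup\{\phi_1\}$, $pp(b)\supseteq p\cup\{\phi_2\}$. But the conclusion you draw, ``by minimality, $\PE(p)=\PE(pp(a))\oplus\PE(pp(b))$,'' does not follow. The hulls $H(a),H(b)$ of $a,b$ inside $\PE(p)$ (which are the modules $\PE(pp(a)),\PE(pp(b))$) are indeed direct summands, but nothing forces their intersection to be $0$, and minimality of $\PE(p)$ over $m$ is a statement about direct summands containing $m$, not about the subgroup $H(a)+H(b)$, which need not be pure, a direct sum, or equal to $\PE(p)$. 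In fact in the very case you want to rule out --- $\PE(p)$ indecomposable --- the hull of any nonzero element is all of $\PE(p)$, so $H(a)=H(b)=\PE(p)$ and the proposed direct sum decomposition is vacuous.

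The standard way to finish is via the local endomorphism ring: $\PE(p)$ indecomposable pure injective implies $\End\PE(p)$ is local. By pure injectivity again there are $f,g\in\End\PE(p)$ with $f(m)=a$, $g(m)=b$ (any pure injective realizing a pp-type extending $p$ receives a morphism from the hull $\PE(p)$ over the realization). Then $(f+g)(m)=m$, so $f+g$ preserves the pp-type of $m$ and is therefore an automorphism; while $f$ and $g$ each strictly increase $pp(m)$ and so are not automorphisms. This contradicts locality. You should replace the last sentence of your $(\Ra)$ argument by this.
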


Suppose that $\phi$ and $\psi$ are pp-formulae such that $\phi$ does not imply $\psi$, hence we have the
nontrivial interval $[\psi\wg \phi, \phi]$ in the lattice of pp-formulae. Suppose that $p$ is a pp-type such
that $\phi\in p$ and $\psi\in p^-$. Then $p$ defines, by restriction, a \emph{cut} on this interval, whose upper
part (i.e. the formulae in $p$ from this interval) is a filter. The following fact shows that (like analytic
functions) the pure injective envelope of $p$ can be recovered form any cut, whatever small it be.

\begin{fact}\label{recov}(see \cite[Cor. 4.16]{Zieg})
Let $p$ be an indecomposable pp-type with the pure injective envelope $M$. If $\phi\in p$ and $\psi$ in $p^-$ are
pp-formulae, then the isomorphism type of $M$ is uniquely determined by the cut defined by $p$ on the interval
$[\phi\wg \psi; \phi]$.

Further the open sets $(\phi'/\psi')$, where $\psi\leq \psi'< \phi'\leq \phi$ and $\phi'\in p$, $\psi'\in p^-$,
form a basis of open sets for $M$ in Ziegler topology.
\end{fact}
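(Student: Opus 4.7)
The proof naturally splits along the two assertions, and my plan is to establish the basis statement first, since the uniqueness of $M$ then falls out of it together with the standard fact that $\ZCM_S$ is $T_0$.

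For the basis statement, I would start from an arbitrary Ziegler-basic neighborhood $(\alpha/\beta)$ of $M$. Because $M=\PE(p)$, the witness element realizing $p$ shows that $\alpha\in p$ and $\beta\in p^-$. The obvious first move is to replace $\alpha$ by $\phi'\doteq \alpha\wg\phi\in p$, which already lies below $\phi$; and to replace $\beta$ by $\psi'\doteq \phi'\wg(\beta+(\phi\wg\psi))\in p^-$. The containment $(\phi'/\psi')\seq(\alpha/\beta)$ is then a direct lattice check: if $n\in N$ satisfies $\phi'$ but not $\psi'$, then $\alpha(n)$ holds (since $\phi'\leq\alpha$), and modularity combined with $\phi'(n)$ forces $\neg\beta(n)$. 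Likewise $\phi\wg\psi\leq\psi'\leq\phi'\leq\phi$ is built into the definition. The delicate point is to ensure that this $\psi'$ genuinely lies in $p^-$; both summands $\beta\wg\phi'$ and $(\phi\wg\psi)\wg\phi'$ lie in $p^-$ separately, but a sum of two formulas in $p^-$ need not be in $p^-$. Here Ziegler's indecomposability criterion (Fact \ref{indec-crit}) intervenes: applied to these two formulas, it produces some $\chi\in p$ such that $(\beta\wg\phi'\wg\chi)+((\phi\wg\psi)\wg\phi'\wg\chi)\in p^-$, and one shrinks $\phi'$ to $\phi'\wg\chi$ and recomputes $\psi'$. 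The resulting pair is the desired refinement.

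Once the basis statement is in hand, the uniqueness assertion is formal. If $p_1$ and $p_2$ are two indecomposable pp-types with $\phi\in p_i$ and $\psi\in p_i^-$, and they induce the same cut on $[\phi\wg\psi,\phi]$, then the described family of open sets $(\phi'/\psi')$ with $\phi'\in p_i$ and $\psi'\in p_i^-$ inside this interval coincides for $i=1,2$. By part two this family is a basis of Ziegler-neighborhoods of $\PE(p_i)$, so $\PE(p_1)$ and $\PE(p_2)$ have exactly the same Ziegler-open neighborhoods in $\ZCM_S$. Since the Ziegler spectrum of any ring is $T_0$ on indecomposable pure injectives (two pure injective envelopes with the same open neighborhoods are isomorphic), we conclude $\PE(p_1)\cong \PE(p_2)$.

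The main technical obstacle is the step in the second paragraph: making sure the candidate $\psi'$ stays outside $p$ without destroying either the placement inside $[\phi\wg\psi,\phi]$ or the inclusion $(\phi'/\psi')\seq(\alpha/\beta)$. This is precisely where the indecomposability of $p$ is indispensable, and it is the content of Ziegler's criterion. Everything else, modulo patience with the modular lattice manipulations, is bookkeeping.
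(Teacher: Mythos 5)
The paper presents this as a \emph{Fact} with a bare citation to Ziegler's Corollary 4.16 and supplies no proof of its own, so there is no in-paper argument for you to be compared against. That said, your proof attempt has a genuine gap right at its foundation. You begin the basis argument with the claim that if $(\alpha/\beta)$ is a basic Ziegler-open neighborhood of $M=\PE(p)$, then ``the witness element realizing $p$ shows that $\alpha\in p$ and $\beta\in p^-$.'' This is false in general. The condition $M\in(\alpha/\beta)$ only asserts that \emph{some} element $m'\in M$ satisfies $\alpha$ but not $\beta$; it says nothing about the distinguished element $m$ realizing $p$. Concretely, take $M$ to be the $p$-adic integers $\widehat{\mathbb{Z}}_p$ over $\Z$, $m=1$, and $p=pp_M(1)$; then $p\mid v\notin p$, yet $M\in (p\mid v\,/\,v=0)$ because the element $p\cdot 1$ is a nonzero multiple of $p$. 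So one cannot pass from $M\in(\alpha/\beta)$ to $\alpha\in p$ and $\beta\in p^-$ without further work, and the whole reduction in your second paragraph collapses.

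What closes this gap is exactly the hard content of Ziegler's result: in an indecomposable pure injective module every two nonzero elements have \emph{linked} pp-types, so any pair $(\alpha/\beta)$ that opens $M$ can be ``transported'' across a chain of partial linkings to a pair inside the given interval $[\phi\wg\psi,\phi]$ that is itself cut nontrivially by $p$. This linking machinery (developed around \cite[Thm.\ 4.9, Cor.\ 4.10]{Zieg} and used to deduce Cor.\ 4.16) is the essential missing ingredient; invoking Fact \ref{indec-crit} later merely handles the far smaller issue of keeping a sum out of $p$, and does not substitute for it. A secondary, more cosmetic issue: your chosen $\psi'=\phi'\wg(\beta+\phi\wg\psi)$ satisfies $\phi\wg\psi\leq\psi'$ but need not satisfy $\psi\leq\psi'$ as the statement demands, though this can be repaired by adjusting the target formulation or replacing $\psi$ by $\psi\wg\phi$ throughout.
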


For instance, the irreducibility criterion from Fact \ref{indec-crit} can be used locally. Namely suppose that
$p$ is predefined by a filter in a given interval, and is indecomposable in there, i.e. satisfies the above
conditions with the $\phi_i$ and $\phi$ within the interval. Then $p$ can be extended to an indecomposable pp-type,
and all such extensions (whatever different) lead to isomorphic indecomposable pure injective modules.

All this relativizes to any definable class of modules, with obvious changes. From this we derive the general
approach to a classification of $\CM$-modules over $S$. First choose a 'nice' interval whose lattice structure
is known. Then using Ziegler's criterion describe indecomposable pp-types in this interval (or rather cuts of
such types). Finally find indecomposable pure injective modules which realize these pp-types locally. This will
complete the classification.

Now we are in a position to classify points of $\ZCM_S$.

\begin{theorem}\label{points}
Let $M$ be an indecomposable pure injective Cohen-Macaulay module over $S$. Then either it is finitely generated,
or $M$ is isomorphic to one of modules $Q_R$, $\wt R$, $N$, $G_y= F((y))$ and $G_x= F((x))$.
\end{theorem}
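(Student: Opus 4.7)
The plan is to split into two cases according to whether $Mx^2 = 0$ or $Mx^2 \neq 0$. In the first case, $M$ is naturally a module over the quotient ring $R = S/x^2 S$; via the embedding $\ZCM_R \hookrightarrow \ZCM_S$ induced by the surjection $S \to R$ (mentioned in the introduction), $M$ is an indecomposable pure injective $\CM$-module over $R$, and by the classification established in \cite{Pun16} it must be isomorphic to one of $B$, $C$, $M_k$ ($k \geq 1$), $Q_R$, $\wt R$, or $G_y$, all of which appear in the statement of the theorem.

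Suppose now that $Mx^2 \neq 0$, and pick any $m' \in M$ with $m'x^2 \neq 0$; setting $m = m'x^2$, the element $m$ is a nonzero member of $Mx^2$. The pp-type $p = pp_M(m)$ then contains the formula $x^2 \mid v$ but not $v = 0$. Since $M$ is indecomposable pure injective, Fact~\ref{indec-crit} ensures that $p$ is indecomposable, and by Fact~\ref{recov} the restriction of $p$ to the interval $[v = 0,\, x^2 \mid v]$ determines $M$ uniquely up to isomorphism. So it suffices to enumerate the indecomposable cuts on this interval, whose lattice structure was computed in Proposition~\ref{1-patt} (the pattern of Figure~\ref{fig2}) together with the additional joins $(A, x^{k+1}) + (S, x^k)$ supplied by the discussion following Lemma~\ref{dist}.

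The principal cuts (filters generated by a single $+$-irreducible formula on Figure~\ref{fig2}) correspond exactly to those finitely generated indecomposable $\CM$-modules that are not annihilated by $x^2$, namely $S$, $A$, $D$, and $Y_k, X_k, N_k$ for $k \geq 1$; these are automatically indecomposable pp-types since each such module has local endomorphism ring. The non-principal cuts arise at the accumulation loci visible on Figure~\ref{fig2}: the loci $p_1, p_2, \ldots$ along the northwestern corays should all collapse to a single pp-type realized by the module $N$ of Lemma~\ref{n} (whose generator $1 \in N \subset Q_S$ satisfies $1 \cdot x^2 = x^2$ and exactly witnesses these accumulations), while the bottom locus $q$ is realized by the element $1 \in G_x = F((x))$ (which satisfies $x^n \mid v$ for every $n$ together with $vy = 0$).

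The main obstacle will be to verify, via the irreducibility criterion of Fact~\ref{indec-crit} applied at each branching locus $(Y_1, nx^{k-1})$ with successors $(A, x^{k+1})$ and $(S, x^k)$, that no further indecomposable cuts arise beyond those listed, and that the limit cuts at $p_k$ for different $k$ really do collapse to a single isomorphism class of pure injective envelope rather than producing a family of new points. A careful local analysis exploiting the distributivity of the interval (Lemma~\ref{dist}) should reduce both verifications to inspection.
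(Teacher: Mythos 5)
Your proposal follows essentially the same route as the paper's own proof: split on whether $Mx^2=0$ (reducing to the $R$-classification of \cite{Pun16}) or $Mx^2\neq 0$ (reducing via Facts~\ref{indec-crit} and \ref{recov} to a classification of indecomposable cuts on the distributive interval $[v=0,\,x^2\mid v]$ of Figure~\ref{fig2}), and then identify the principal cuts with the finitely generated modules $S,A,D,Y_k,X_k,N_k$ and the non-principal cuts $p_i$ and $q$ with $N$ and $G_x$ respectively. One small imprecision: the cuts $p_1,p_2,\dots$ are genuinely distinct pp-types on the interval (realized by the distinct elements $x^2,x^3,\dots\in N$) rather than collapsing to a single type, but as your next sentence correctly notes the relevant point is that all of them have the same pure injective envelope $N$, which is exactly what the paper verifies.
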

\begin{proof}
If $M$ is annihilated by $x^2$, then it is pure injective indecomposable $\CM$-module over $R$. It follows from
\cite{Pun16} that $M$ is one of the finitely generated modules $B, C$, $M_k$; or one of the modules $Q_R, \wt R$
or $G_y$.

Otherwise there exists a nonzero $m\in Mx^2$. Thus if $p= pp_M(m)$ then $x^2\mid v\in p$ and $v=0\in p^-$, hence
this type (therefore the module) is visible within the interval on Figure \ref{fig2}. Furthermore to apply the
Ziegler criterion for indecomposability (see Fact \ref{indec-crit}) note that this interval is a distributive
lattice. Thus a pp-type $p$ within this interval is indecomposable iff $p^-$ is a \emph{cofilter}, i.e.
$\phi_1+ \phi_2\in p^-$ for all pp-formulas $\phi_i$ in the negative part of $p$.

It easily follows that any cut on the frame gives an indecomposable pp-type. The \emph{principal cut} whose upper
part is generated by one pp-formula is realized in the corresponding indecomposable finitely generated
$\CM$-module. For instance the cut whose positive part is generated by $(D,x^3)$ corresponds to the module $D$.

What remains is to identify the modules which are produced by the infinitely generated cuts $p_i$, $i\geq 1$, and
by $q$ (see Figure \ref{fig2}). The latter case is easy to resolve. Namely the element $1\in G_x= F((x))$ is
divisible by any power of $x$, hence defines the required cut. Thus $\PE(q)\cong G_x$.

Now it is easily checked that the element $x^2\in N$ realizes the pp-type whose restriction to our interval is
$p_1$; furthermore $x^k$, $k> 2$ realizes in $N$ the pp-type, whose restriction coincides with $p_{k-1}$. Thus
we have filled in all vacant positions for points.
\end{proof}

Recall that a pair $(\phi/\psi)$ of pp-formulae is said to be \emph{minimal} in a theory $T$ if $\psi< \phi$, and
there is no pp-formula strictly between $\psi$ and $\phi$ in the lattice of pp-formulas relativized to $T$.

From Fact \ref{recov} it is easy to construct a basis of open sets for each point visible on Figure \ref{fig2}.
For instance we immediately obtain the following.

\begin{lemma}\label{isol}
Each finitely generated point which occurs on Figure \ref{fig2}, but $D$, is isolated by a minimal pair in the
theory $T_{CM}$ of $\CM$-modules over $S$.
\end{lemma}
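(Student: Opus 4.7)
The plan is to attach to each finitely generated point $M$ of Figure \ref{fig2} (other than $D$) a minimal pair $(\phi/\psi)$ in $L_{CM}$ whose open set in $\ZCM_S$ contains $M$; isolation then follows from Fact \ref{recov}, since for a minimal pair the cut on the two-element interval $[\psi,\phi]$ is forced, the isomorphism type of the pure injective envelope is uniquely determined, and by construction it equals $M$.

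For $\phi$ I would take the pp-formula freely realized by the pointed module $(M,m)$ labelling a node on Figure \ref{fig2} whose principal cut has PE $\cong M$. For $\psi$ I use the description of $L_{CM}$ given in Section \ref{S-cut1}. If $(M,m)$ lies on one of the linearly ordered segments of the pattern --- the descending chains $(X_k,mx^j), (N_k,mx^j)$ down to $(D,x^{j+1})$ and the short segments $(Y_2,nx^j), (N_1,nx^j), (Y_1,nx^j)$ below --- then $\psi$ is the node immediately below $(M,m)$ in the pattern. At a branching node $(Y_1,nx^j)$ the correct $\psi$ is the sum $(A,x^{j+2}) + (S,x^{j+1})$, which the discussion after Lemma \ref{dist} inserts as the unique cover of $(Y_1,nx^j)$ from below. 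For either branch summand $(A,x^{j+2})$ or $(S,x^{j+1})$ the correct $\psi$ is their common successor $(X_1,mx^{j+1})$.

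The core verification is that each such pair really is a cover in $L_{CM}$. The key structural input, recalled at the end of Section \ref{S-cut1}, is that every element of the interval $[v=0, x^2\mid v]$ of $L_{CM}$ is a sum of $+$-irreducible elements of the pattern, and that the only sums not equivalent to a single pattern element are the formulas $(A,x^{k+1}) + (S,x^k)$ for $k \geq 3$. Along the linear segments this gives the cover immediately, since no such exotic sum can fall strictly between two consecutive pattern nodes there; indeed, by distributivity (Lemma \ref{dist}) any pattern $+$-irreducible strictly below the upper node already lies below the chosen lower node. At a branching node the three-step cover chain $(Y_1,nx^j) > (A,x^{j+2}) + (S,x^{j+1}) > (A,x^{j+2})$ (or $(S,x^{j+1})$) $> (X_1,mx^{j+1})$ displayed after Lemma \ref{dist} is exactly the cover structure; here distributivity combined with $+$-irreducibility reduces the check to the observation that any pattern element strictly below $(A,x^{j+2})$ or $(S,x^{j+1})$ already lies below $(X_1,mx^{j+1})$, ruling out any intermediate sum.

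The module $D$ is excluded because its principal cut is generated by $(D,x^{j+1})$, which sits at the infimum of the descending chain $(X_k,mx^j), (N_k,mx^j)$ in the pattern. Every pp-formula in the interval strictly above $(D,x^{j+1})$ dominates some entry of this chain, so no cover of $(D,x^{j+1})$ from above exists in $L_{CM}$, and consequently no minimal pair whose top formula lies in the principal cut of $D$ can be produced from this interval. The only genuinely delicate point in the argument is the branching cover check, and distributivity (Lemma \ref{dist}) is the essential tool that makes it go through.
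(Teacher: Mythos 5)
Your handling of the finitely generated points other than $D$ is essentially the route the paper (tersely) intends: for each occurrence $(M,m)$ of $M$ on Figure \ref{fig2} take $\phi = (M,m)$ and let $\psi$ be its cover in the pattern, then check minimality in $L_{CM}$ using distributivity (Lemma \ref{dist}) together with the description at the end of Section \ref{S-cut1} that the only non-pattern formulas in the interval are the sums $(A,x^{k+1})+(S,x^k)$; Fact \ref{recov} then forces any indecomposable pure injective opening the pair to be isomorphic to $M$. That part of the proposal is correct and matches the paper's one worked example ($X_1$ isolated by $(X_1,mx^2)$ over $(N_1,mx^2)$).

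The exclusion of $D$, however, is argued for a wrong reason. You claim there is no cover of $(D,x^{j+1})$ from above and infer that no minimal pair whose top formula lies in the principal cut of $D$ comes from this interval. Both the inference and the conclusion fail: $(S,x^2)$ over $(X_1,mx^2)$ is a minimal pair in $L_{CM}$, and $(S,x^2) > (D,x^3)$ lies in the principal filter generating $p_D$. The real obstruction is on the opposite side. Since $p_D$ is principal, the only way to get a minimal pair opened at $D$ is to take $\phi = (D,x^3)$ and some $\psi$ covered by $(D,x^3)$ (any $\phi > (D,x^3)$ has its cover $\psi$ still $\geq (D,x^3)$, since otherwise $(D,x^3)$ would lie strictly between and contradict minimality, so such a pair is not opened at $D$). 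But Figure \ref{fig2} shows dots between $(D,x^3)$ and $(Y_2,nx^2)$ indicating an infinite strictly increasing chain $(Y_1,nx^2) < (N_1,nx^2) < (Y_2,nx^2) < \cdots$ approaching $(D,x^3)$ from below, so $(D,x^3)$ has no cover from below in $L_{CM}$; this is precisely the strictly descending basis of open sets for $D$ recorded in the proof of Lemma \ref{d}. Relatedly, the three-node segment $(Y_2,nx^j),(N_1,nx^j),(Y_1,nx^j)$ that you treat as a finite piece misreads those dots --- the chain continues unboundedly, and that infinite tail below $(D,x^{j+1})$ is exactly what prevents $D$ from being isolated.
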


For example $X_1$ is isolated by the pair $mx^2\in X_1$ over $mx^2\in N_1$, which corresponds to the irreducible
map $X_1\to N_1$.

Now we proceed with calculating the Cantor--Bendixson rank of points. The \emph{Cantor--Bendixson analysis} of a
closed subset $T$ of the Ziegler spectrum runs as follows. The first derivative $T'$ is obtained from $T$ by deleting
isolated points. Those points are assigned the $\CB$-rank 0. Further, $T''$ is obtained from $T'$ by deleting its
isolated points, which are assigned the $\CB$-rank 1, and so on. If $T'''$ is an empty set, then we say that the
$\CB$-rank of $T$ equals $2$.

There is a parallel analysis of the corresponding definable category which leads to the notion of the
\emph{Krull--Gabriel dimension} (see \cite[Ch. 7]{Kra}). Suppose that the \emph{isolation condition} holds true,
i.e. (see \cite[Sect. 5.3.2]{Preb}) on each stage of the $\CB$-analysis, each point is isolated by the minimal pair.
Then both procedures give the same value. This means that that lattice of pp-formulae of the next derivative of $T$
is obtained from the previous one by collapsing intervals of finite length. The corresponding dimension of the
lattice is called the \emph{$m$-dimension} in \cite[Sect. 7.7.2]{Preb}, and coincides with the Krull--Gabriel
dimension of the definable category.

We apply this analysis to the theory $T_{CM}$ of $\CM$-modules over $S$. We will see below that each isolated
point in $T_{CM}$ is isolated by the minimal pair. It follows that the lattice of pp-formulae $L_1$ of $T'_{CM}$
is obtained from the lattice $L$ of $\CM$-formulae over $S$ by collapsing intervals of finite length.

All this is applicable to any given interval, say, to the one given by Figure \ref{fig2}. We conclude that the 
interval $[v=0, x^2\mid v]$ in $T'_{CM}$ is the following chain of order type $1+ \om^*$.

\vspace{2mm}

$$
\vcenter{%
\xymatrix@C=12pt@R=14pt{%
*+={\circ}\ar@{}+<20pt,0pt>*{_{(S,x^2)}}\ar@{-}[d]\\
*+={\circ}\ar@{}+<20pt,0pt>*{_{(D,x^3)}}\ar@{-}[d]\\
*+={\circ}\ar@{}+<20pt,0pt>*{_{(S,x^3)}}\ar@{-}[d]\\
*+={\circ}\ar@{}+<20pt,0pt>*{_{(D,x^4)}}\ar@{}+<0pt,-6pt>*{.}\ar@{}+<0pt,-10pt>*{.}\ar@{}+<0pt,-14pt>*{.}\\
*+={\circ}\ar@{}+<10pt,0pt>*{_0}
}}
$$

\vspace{3mm}

Now we are in a position to calculate the rank of $D$.

\begin{lemma}\label{d}
$D$ is isolated in the first derivative $T'_{CM}$ by the minimal pair $(D,x^3)$ over $(S,x^3)$, in particular
the Cantor--Bendixson rank of $D$ equals 1.
\end{lemma}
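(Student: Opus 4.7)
The plan is to show three things in order: (a) that the pair $(D,x^3)/(S,x^3)$ is a cover in the lattice of $\CM$-formulae relativized to $T'_{CM}$; (b) that $D$ is the unique point of $T'_{CM}$ in the basic open set $((D,x^3)/(S,x^3))$; and (c) deduce the Cantor--Bendixson rank. For (a), I would simply read off the collapsed chain $(S,x^2)>(D,x^3)>(S,x^3)>(D,x^4)>\ldots$ displayed just before the lemma: since every pp-formula implying $(D,x^3)$ is itself implied by $x^2\mid v$, any strict intermediate formula would have to lie in that interval, and that interval is precisely the displayed chain in $T'_{CM}$, in which $(D,x^3)$ covers $(S,x^3)$.

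For (b), the first move is to pin down the candidates. By Lemma \ref{isol} every indecomposable finitely generated $\CM$-point visible on Figure \ref{fig2} other than $D$ is isolated in $T_{CM}$ by a minimal pair, while the remaining finitely generated points $B$, $C$, $M_k$ are annihilated by $x^2$ and, via the $R$-case from \cite{Pun16}, are isolated as well. Hence the only candidates in $T'_{CM}$ are $D$ together with the infinite points $Q_R$, $\wt R$, $N$, $G_y$, $G_x$ from Theorem \ref{points}. That $D$ itself sits in the open set is immediate from the free realization: $x^3\in D$ witnesses $(D,x^3)$, while $x^3\notin D\cdot x^3$ shows $x^3\mid v$ fails. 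Next, $Q_R$, $\wt R$ and $G_y$ are all $R$-modules, so the formula $(D,x^3)$, which implies $x^2\mid v$, is satisfied only by $0$, and $0$ trivially satisfies $(S,x^3)$. In $G_x=F((x))$ the action of $y$ is zero and every element is divisible by every power of $x$, so the two formulae coincide on $G_x$. The delicate case is $N$: here I would compute $\ann_N(xy)$, noting that as an $F$-space it is spanned by the $x^k$ (\,$k\geq 1$\,) and the $n_j$; observe that $n_j\cdot x^2=0$ because $x^3y=0$; and conclude that $(D,x^3)(N)\seq x^3\cdot N$, so every witness in $N$ for $(D,x^3)$ also satisfies $(S,x^3)$.

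For (c), I would place $D$ in $T'_{CM}$ by reading Figure \ref{fig2}: $(D,x^3)$ appears there as the meet $p_1$ of the infinite descending chain $(X_k,mx^2)$, which prevents any minimal pair in $T_{CM}$ from isolating $D$, so $D\in T'_{CM}$. Combined with (a) and (b) this gives $\CB(D)=1$. The main obstacle is the bookkeeping for $N$ in step (b); the other infinite points are knocked out almost formally by the action of $x^2$ or of $y$, and the minimality step reduces to quoting the interval structure already derived.
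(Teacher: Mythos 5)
Your steps (a) and (b) are essentially correct and in fact carry out more carefully than the paper's terse proof the check that no other point of $T'_{CM}$ lies in $((D,x^3)/(S,x^3))$. In particular your computation that $\ann_N(xy)\cdot x^2\seq Nx^3$ (so $N$ is excluded from the open set) is right, once one also accounts for the elements $xy^j$ in $\ann_N(xy)$, which you omit from your spanning set but which are killed by $x^2$ anyway and so do not change the conclusion. The minimality claim in (a) is read off, as in the paper, from the collapsed chain in $L_1$ displayed just before the lemma.

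Step (c) is where the argument goes wrong. You claim $D$ is not isolated in $T_{CM}$ because $(D,x^3)$ is the meet $p_1$ of the descending chain $(X_k,mx^2)$. But $p_1$ is not the formula $(D,x^3)$: it is the \emph{cut} whose positive part consists exactly of $(S,x^2)$ together with all the $(X_k,mx^2)$, $(N_k,mx^2)$, and the paper identifies $p_1$ as the pp-type realized by $x^2\in N$, not by any element of $D$ (see the proof of Theorem \ref{points}). The module $D$ realizes the \emph{principal} cut at $(D,x^3)$, so its basis of Ziegler neighborhoods from Fact \ref{recov} is $(\phi'/\psi')$ with $\phi'\geq(D,x^3)$ and $\psi'$ strictly below $(D,x^3)$. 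The formulae $(X_k,mx^2)$ all lie \emph{above} $(D,x^3)$, hence inside the pp-type of $x^3\in D$; they never appear as the $\psi'$ you need and place no obstruction whatsoever on the isolation of $D$. The actual obstruction is on the other side: the ascending chain $(Y_2,nx^2)<(Y_3,nx^2)<\cdots$ accumulating at $(D,x^3)$ from below. Each basic open set $((D,x^3)/(Y_k,nx^2))$ already contains $Y_{k+1}$, $N_k$, $Y_{k+2},\ldots$, so none is a singleton and $D\in T'_{CM}$. That is precisely the basis of open sets the paper reads off Figure \ref{fig2} (``pairs of formulae $x^3\in D$ over $nx^k\in Y_k$''). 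The chain you cite is instead the obstruction to the isolation of $N$ (Lemma \ref{n-isol}), not of $D$.
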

\begin{proof}
From Figure \ref{fig2} we conclude that the basis of open sets for $D$ in the Ziegler topology is given by
pairs of formulae $x^3\in D$ over $nx^k\in Y_k$, $k= 1, 2, \dots$, in particular $D$ is not isolated. It remains
to look at the above diagram.
\end{proof}

To include into consideration the module $N$ we need one more definition. A point $M$ in a closed set $T$ is said
to be \emph{neg-isolated} (relative to $T$), if there is a nonzero $m\in M$ such that the negative part of the
pp-type of $m$ contains the largest formula. By \cite[Prop. 5.3.46]{Preb} this notion does not depend on the
choice of $m$.

\begin{lemma}\label{n-isol}
$N$ is isolated in $T'_{CM}$ by the minimal pair $x^2\in S$ over $x^3\in D$, in particular $N$ has $\CB$-rank 1.
Further $N$ is neg-isolated in $T_{CM}$.
\end{lemma}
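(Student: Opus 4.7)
The plan is to take $x^2\in N$ as the witness element and exploit the chain description of $[v=0,\,x^2\mid v]$ in $T'_{\CM}$ just obtained. Since $(S,x^2)$ covers $(D,x^3)$ in that chain, the pair is automatically minimal in $T'_{\CM}$; what remains is to check that $N$ lies in the open set $(S,x^2)/(D,x^3)$, that no other point of $T'_{\CM}$ does, and then to establish the neg-isolation separately.

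For membership, observe that on $\CM$-modules the formula $(S,x^2)$ collapses to $x^2\mid v$, since the clause $vy=0$ is automatic from $x^2y=0$. Because $1\in N$, the element $x^2=1\cdot x^2$ realizes this. For non-satisfaction of $(D,x^3)$, whose $\CM$-definable subgroup is $\{wx^2:wxy=0\}$, I would exploit that every $w\in N$ has the form $s+\sum_k c_k n_k$ with $s,c_k\in S$, together with $n_k x=0$ for $k\geq 1$. Then $wx^2=sx^2$ and $wxy=sxy$, so $wx^2=x^2$ forces $s\in 1+\Ann(x^2)=1+yS$ while $wxy=0$ forces $s\in\Ann(xy)=xS$; these conditions are incompatible, as the first demands constant term $1$ and the second forbids it.

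To pin $N$ down as the unique point of $T'_{\CM}$ in this open set I would traverse Theorem \ref{points}. The modules $B$, $C$, $M_k$, $Q_R$, $\wt R$ and $G_y$ are annihilated by $x^2$, so $(S,x^2)$ defines the zero submodule on them and they fall outside; the remaining finitely generated points of Figure \ref{fig2} other than $D$ are isolated in $T_{\CM}$ by Lemma \ref{isol} and hence absent from $T'_{\CM}$; on $D$ both pp-formulae define the same submodule $x^3S$; and on $G_x=F((x))$, where $x$ acts invertibly and $y$ trivially, both define the whole module. Only $N$ survives. As $N$ is absent from the finitely generated list, it was not isolated in $T_{\CM}$, and the above isolation in the first derivative gives $\CB$-rank exactly $1$.

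For neg-isolation of $N$ in $T_{\CM}$ I would realize $N$ as the direct limit of the ray $X_1\hookrightarrow X_2\hookrightarrow\cdots$ in Figure \ref{fig1}, with $x^2=\varinjlim mx^2$, so that $pp_N(x^2)=\bigcup_k pp_{X_k}(mx^2)$; this displays the pp-type of $x^2$ as the upper part of the cut $p_1$ on Figure \ref{fig2}. The natural candidate for the largest formula in $pp^-_N(x^2)$ is $(D,x^3)$, which is the unique top $+$-irreducible element of the lower part of $p_1$ within the displayed interval. The principal obstacle is to upgrade this intra-interval observation to a global statement in $L_{\CM}$: an arbitrary $\chi\notin pp_N(x^2)$ would be reduced to $\chi\wedge(x^2\mid v)$, which sits inside the interval, and the conclusion $\chi\wedge(x^2\mid v)\leq(D,x^3)$ would follow from meet-irreducibility of $(D,x^3)$ in $L_{\CM}$, itself a consequence of the unique cover of $(D,x^3)$ by $(S,x^2)$ in the pattern combined with the distributivity of Lemma \ref{dist}.
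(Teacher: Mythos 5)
Your proposal is correct and reaches the lemma by essentially the same route as the paper: one works inside the interval $[v=0,\,x^2\mid v]$ of Figure \ref{fig2} and then inside the derived chain preceding Lemma \ref{d}.  The difference is one of emphasis: the paper's proof reads the basis of open sets for $N$ (pairs $mx^k\in X_k$ over $x^3\in D$) straight off Figure \ref{fig2} via Fact \ref{recov}, from which neg-isolation and non-isolation are immediate, and then looks at the $1+\om^*$ chain to see that the collapsed pair $(S,x^2)/(D,x^3)$ is minimal and opens $N$.  You instead verify the key containments element by element (computing that $1\cdot x^2$ witnesses $(S,x^2)$ on $N$, that no $w=s+\sum c_k n_k$ can satisfy $(D,x^3)$, and that every other point of Theorem \ref{points} lies outside the pair), which is valid and arguably more self-contained but re-proves by hand what Fact \ref{recov} packages for free.

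One piece of your final paragraph is phrased with the wrong lattice concept.  What is really at work is not ``meet-irreducibility of $(D,x^3)$'' nor a ``unique cover of $(D,x^3)$ by $(S,x^2)$'' (in the pattern $(S,x^2)$ does not cover $(D,x^3)$ --- there is an infinite chain of the $(X_k,mx^2)$ between them; they are only adjacent after the interval is collapsed in $L_1$).  The correct mechanism is $+$-irreducibility together with Lemma \ref{dist}: any $\chi\wedge(x^2\mid v)\in p^-\cap[0,x^2\mid v]$ decomposes as a finite sum of $+$-irreducibles (pointed indecomposable modules from Figure \ref{fig2}); since $p$ is a filter each summand is again in $p^-$, and the only $+$-irreducibles of the interval lying in $p^-$ are those $\leq(D,x^3)$, because $(D,x^3)$ is the infimum of the chain $(X_k,mx^2)$ whose members all lie in $p$.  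Distributivity then gives $\chi\wedge(x^2\mid v)\leq(D,x^3)$.  With that replacement your argument closes cleanly.
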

\begin{proof}
The basis of open sets for $N$ is given by pairs $mx^k\in X_k$, $k=1, 2, \dots$ over $x^3\in D$, in particular
this point is neg-isolated and not isolated. It remains to look at the above diagram.
\end{proof}

Thus the only remaining point in the above interval is $G_x= \PE(q)$.

\begin{lemma}\label{gx}
$G_x$ has Cantor--Bendixson rank $2$.
\end{lemma}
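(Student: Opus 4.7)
The plan is to verify that $G_x$ satisfies the three conditions characterizing Cantor--Bendixson rank $2$: namely that $G_x$ is not isolated in $T_{CM}$, not isolated in $T'_{CM}$, but is isolated in $T''_{CM}$.

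First I would confirm that $G_x$ is not isolated in $T_{CM}$. The element $1\in G_x=F((x))$ realizes a pp-type extending the cut $q$ from Figure \ref{fig2}, and by Fact \ref{recov} a basis of open sets for $G_x$ within the interval $[v=0,\,x^2\mid v]$ consists of sets of the form $(\alpha/(v=0))$ for $\alpha\in q$. Every such $\alpha$ lies above $(D,x^k)$ for some $k$, so each of these basic open sets also contains the point $D$; hence $G_x$ cannot be isolated.

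Next, to see $G_x$ is not isolated in $T'_{CM}$, I would use the explicit description of the collapsed interval given immediately before Lemma \ref{d}. In $T'_{CM}$ the interval in question has become the chain $(S,x^2)>(D,x^3)>(S,x^3)>(D,x^4)>\ldots>0$ of order type $1+\om^*$. Any basic open set around $G_x$ restricted to this chain is forced to be of the form $(\alpha/0)$ for some $\alpha$ strictly above $0$, and every such set still contains the rank-$1$ points $D$ and $N$ identified in Lemmas \ref{d} and \ref{n-isol}. So $G_x\in T''_{CM}$. Finally, passing to $T''_{CM}$ collapses all finite-length intervals; since each cover in the above chain has length one, all of its nonzero members are identified, while $0$ remains separate because $[0,(S,x^2)]$ still has infinite length in $T'_{CM}$. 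Hence $(x^2\mid v)/(v=0)$ becomes a minimal pair in $T''_{CM}$, and by Theorem \ref{points} together with Lemmas \ref{d} and \ref{n-isol}, $G_x$ is the unique remaining point on which this pair is nontrivial. Therefore $\CB(G_x)=2$.

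The main obstacle is checking that the rank computation performed inside the single interval $[v=0,\,x^2\mid v]$ really determines the rank of $G_x$ in the full space. This rests on two points: first, that the pp-type of $G_x$ is already pinned down inside this interval (as used in the proof of Theorem \ref{points}), so by Fact \ref{recov} any isolating neighbourhood may be chosen within it; and second, the isolation condition for $T_{CM}$ flagged in the discussion preceding Lemma \ref{d}, which guarantees that each derivative of the lattice $L_{CM}$ is obtained purely by collapsing intervals of finite length, so the interval-level calculation accurately tracks the global Ziegler-topology $\CB$-analysis.
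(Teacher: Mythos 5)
Your argument is correct and follows the same strategy as the paper: read off from Figure \ref{fig2} that the cut $q$ determining the pp-type of $1\in G_x$ has only $0$ in its negative part within $[v=0,\,x^2\mid v]$, so a basis of \nbhs{} for $G_x$ is $(\alpha/0)$ with $\alpha\in q$; observe that $D$ and $N$ (rank $1$) remain in every such open set, so $G_x$ survives to $T''_{CM}$; and then note that the $m$-dimension collapse of the chain of order type $1+\om^*$ turns $[0,\,x^2\mid v]$ into a two-element interval in $L_2$, leaving $G_x$ as the unique point isolated by the minimal pair $(x^2\mid v)/(v=0)$. The paper's proof is terser but relies on precisely the same ingredients (criticality of $q$, the isolation condition, and the $m$-dimension computation); your write-up merely spells out which lower-rank points witness non-isolation at levels $0$ and $1$.
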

\begin{proof}
From Figure \ref{fig2} we see that the basis of open sets for $G_x$ is given by pairs $x^k\in S$, $k= 1, 2, \dots$
over $0$, hence $q$ is a pp-type maximal over $0$ (usually called \emph{critical}). It easily follows that
$G_x$ is not isolated on levels zero and one of the $\CB$-analysis.

By applying the $m$-dimension analysis to the interval from the above diagram, we obtain the two-point interval.
It follows that $G_x$ is the only point in the interval $x^2\in S$ over $0$ in $T''_{CM}$.
\end{proof}

\section{The second interval}\label{S-cut2}

To complete the description of topology of $\ZCM_S$ we need to calculate the bases of open sets for points from
this space which are not visible on Figure \ref{fig2}, hence for indecomposable pure injective $\CM$-modules
over $R$. Here is a complete list of these points.

\vspace{2mm}

\textbf{1)} $B$ and $M_k$, $k\geq 1$; \quad \textbf{2)} $C$ and $\wt R$; \quad \textbf{3)} $G_y= F((y))$ and $Q_R$.

\vspace{2mm}

Furthermore (from \cite{Pun16}) we know that the $\CB$-ranks of these points in the relative topology of $\ZCM_R$
are $0, 1$ and $2$, but we have recalculate these ranks in the ambient space $\ZCM_S$. For this we construct
another interval in the lattice $L$ of $\CM$-formulae, which captures points in question.

\begin{prop}\label{cut2}
The diagram on Figure \ref{fig3} represents the pattern of the pointed module $(C, xy)$.
\end{prop}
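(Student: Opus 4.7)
The plan is to argue exactly as in Proposition \ref{1-patt}. First, since $\ann_S(xy)=xS$, one has $C\cong S/xS$ with $xy$ corresponding to the image of $1$, so the pointed module $(C,xy)$ is a free realization of the $\CM$-formula $vx=0$. Consequently a pointed morphism $(C,xy)\to(N,n)$, with $N$ an indecomposable finitely generated $\CM$-module, is exactly a choice of a nonzero element $n\in N$ with $nx=0$, the morphism being $xy\cdot s\mapsto n\cdot s$. Thus the pattern of $(C,xy)$ is the poset of equivalence classes of such pairs $(N,n)$, ordered by existence of a pointed morphism.

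The covers of this poset are detected by the irreducible morphisms of the AR-quiver in Figure \ref{fig1}. Inside the top component, the inclusion $C\hookrightarrow D$ sends $xy\in C$ to $xy\in D$, and multiplication by $y$ from $D$ to $C$ sends $xy\in D$ to $xy^2\in C$; iterating yields an infinite descending chain $(C,xy)>(D,xy)>(C,xy^2)>(D,xy^2)>\dots$ internal to the top component. To exit into the bottom component, one uses the inclusion $C\hookrightarrow B$, which gives a cover $(B,xy)$, and then proceeds via the irreducible morphism $B\to Y_1$ sending $xy$ to $ny\in Y_1$. At each subsequent vertex one substitutes the current pointed element into the left almost-split morphism leaving that vertex, discards the summands on which the image is zero, and records the surviving summands as the covers; a bifurcation appears whenever more than one component is nonzero, exactly as at $(Y_1,nx^2)$ in the proof of Proposition \ref{1-patt}. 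The explicit list of irreducible morphisms \textbf{1)}--\textbf{15)} together with the almost-split sequences in Section \ref{S-fg} makes this computation routine.

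The main difficulty is bookkeeping rather than anything conceptual: one needs to verify systematically that every pointed morphism out of $(C,xy)$ factors through one of the covers so produced, and that two distinct branches yield pointed modules in genuinely distinct equivalence classes. As in Lemma \ref{dist}, the relevant interval $[v=0,\,vx=0]$ of $L_{\CM}$ is distributive, the proof being the same uniseriality-over-the-endomorphism-ring argument applied now to the $x$-torsion submodule of each indecomposable finitely generated $\CM$-module appearing in the pattern. This rules out any further identifications that would collapse the diagram, and the resulting Hasse diagram is precisely Figure \ref{fig3}.
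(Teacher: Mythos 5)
Your overall strategy matches the paper's own (terse) approach: identify $(C,xy)$ as a free realization of the annihilator formula $vx=0$, and then compute covers in the pattern essentially by inspection using the AR-quiver. That much is fine, and the distributivity remark mirrors what the paper itself says immediately after the figure.

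However, there is a concrete error in the execution. You claim that one ``exit[s] into the bottom component'' via ``the inclusion $C\hookrightarrow B$, which gives a cover $(B,xy)$.'' This is wrong on two counts. First, $C\to B$ is not an irreducible morphism of the AR-quiver (Figure \ref{fig1}), which conflicts with your own stated principle that the covers are detected by irreducible morphisms; the only irreducible morphisms touching $C$ are $C\hookrightarrow D$ and $D\xr{y}C$, and those touching $B$ are $B\to Y_1$ and $X_1\to B$. Second, $(B,xy)$ is simply not a cover of $(C,xy)$: one has strict pointed-morphism inequalities $(B,xy) < (X_1,n) < (M_1,n) < (X_2,n) < (M_2,n) < \cdots < (C,xy)$, so $(B,xy)$ sits several (in fact infinitely many) rungs down the poset, exactly as Figure \ref{fig3} shows — it is covered by $(X_1,n)$, which is obtained from the irreducible morphism $X_1\to B$ (gluing $mx$ and $n$, hence $n\mapsto xy$). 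The top of the pattern has the unique cover $(D,xy)$, and the branch through the modules $B$, $X_k$, $M_k$, $Y_k$, $N_k$ is reached only after passing to the limit of that $C$--$D$ coray; there is no finitely generated cover of $(C,xy)$ linking the two components directly.

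So while your setup (free realization of $vx=0$, distributivity, inspection via the AR-quiver) is the right one, the attempt to enter the second component through a fabricated cover $(B,xy)$ is a genuine gap: the bifurcation you posit at the top of the pattern simply does not happen, and following that route would produce a Hasse diagram disagreeing with Figure \ref{fig3}.
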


\begin{figure}[b]
$$
\vcenter{%
\xymatrix@C=16pt@R=14pt{%
&&&&*+={\circ}\ar@{}+<16pt,4pt>*{_{xy\in C}}\ar[dr]&&&\\
&&&&&*+={\circ}\ar@{}+<16pt,4pt>*{_{xy\in D}}\ar[dr]&&&\\
&&*+={\circ}\ar@{}+<-16pt,4pt>*{_{n\in M_1}}\ar[dr]\ar[dl]\ar@{}+<8pt,8pt>*{.}\ar@{}+<14pt,14pt>*{.}\ar@{}+<20pt,20pt>*{.}
&&&&*+={\circ}\ar@{}+<16pt,4pt>*{_{xy^2\in C}}\ar[dr]\\
&*+={\circ}\ar@{}+<-16pt,4pt>*{_{n\in X_1}}\ar[dr]\ar[dl]&&
*+={\circ}\ar@{}+<-18pt,0pt>*{_{ny\in Y_2}}\ar[dl]\ar[dr]\ar@{}+<8pt,8pt>*{.}\ar@{}+<14pt,14pt>*{.}\ar@{}+<20pt,20pt>*{.}
&&&&*+={\circ}\ar@{}+<17pt,4pt>*{_{xy^2\in D}}\ar@{}+<8pt,-8pt>*{.}\ar@{}+<14pt,-14pt>*{.}\ar@{}+<20pt,-20pt>*{.}&\\
*+={\circ}\ar@{}+<-16pt,4pt>*{_{xy\in B}}\ar[dr]&&*+={\circ}\ar@{}+<-18pt,0pt>*{_{ny\in N_1}}\ar[dr]\ar[dl]&&
*+={\circ}\ar@{}+<-18pt,0pt>*{_{ny\in M_2}}\ar[dr]\ar[dl]\ar@{}+<8pt,8pt>*{.}\ar@{}+<14pt,14pt>*{.}\ar@{}+<20pt,20pt>*{.}
&&&&&\\
&*+={\circ}\ar@{}+<-18pt,0pt>*{_{ny\in Y_1}}\ar[d]\ar[dr]&&*+={\circ}\ar@{}+<-18pt,0pt>*{_{ny\in X_2}}\ar[dr]\ar[dl]&
&*+={\circ}\ar@{}+<-18pt,0pt>*{_{ny^2\in Y_3}}\ar[dl]\ar@{}+<8pt,8pt>*{.}\ar@{}+<14pt,14pt>*{.}\ar@{}+<20pt,20pt>*{.}
\ar@{}+<8pt,-8pt>*{.}\ar@{}+<14pt,-14pt>*{.}\ar@{}+<20pt,-20pt>*{.}
&&&&\\
&*+={\circ}\ar@{}+<-18pt,0pt>*{_{xy\in S}}\ar[d]&*+={\circ}\ar@{}+<18pt,0pt>*{_{ny\in M_1}}\ar[dr]\ar[dl]&&
*+={\circ}\ar@{}+<20pt,0pt>*{_{ny^2\in N_2}}\ar[dl]\ar@{}+<8pt,-8pt>*{.}\ar@{}+<14pt,-14pt>*{.}\ar@{}+<20pt,-20pt>*{.}
&&&\\
&*+={\circ}\ar@{}+<-18pt,0pt>*{_{xy\in X_1}}\ar[dr]\ar[dl]&&
*+={\circ}\ar@{}+<18pt,0pt>*{_{ny^2\in Y_2}}\ar[dl]\ar@{}+<8pt,-8pt>*{.}\ar@{}+<14pt,-14pt>*{.}\ar@{}+<20pt,-20pt>*{.}
&&&&&\\
*+={\circ}\ar@{}+<-18pt,0pt>*{_{xy^2\in B}}\ar[dr]&&
*+={\circ}\ar@{}+<20pt,0pt>*{_{ny^2\in N_1}}\ar[dl]\ar@{}+<8pt,-8pt>*{.}\ar@{}+<14pt,-14pt>*{.}\ar@{}+<20pt,-20pt>*{.}
&&&&&&\\
&*+={\circ}\ar@{}+<18pt,0pt>*{_{ny\in Y_2}}\ar@{}+<8pt,-8pt>*{.}\ar@{}+<14pt,-14pt>*{.}\ar@{}+<20pt,-20pt>*{.}&&&&
&&&\\
&&&&&&&&&&\\
&&&&&
}}
$$
\caption{}\label{fig3}
\end{figure}

\begin{proof}
By inspection. A very similar proof (with a similar resulting diagram) is given in \cite[Prop. 4.3]{Pun16}.
\end{proof}

Note that the top pair $xy\in C$ is a free realization of the formula $vx=0$. It follows that the interval
$[v=0, vx=0]$ in $L$ is generated by pp-formulae on the figure with respect to free sums, for instance it is
distributive. This diagram allows us to calculate the ranks of remaining points.

\begin{lemma}\label{mk}
Each finitely generated point $M_k$ is isolated in $\ZCM_S$ by a minimal pair, and the same holds true for $B$.
\end{lemma}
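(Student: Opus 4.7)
The plan is to exhibit, for each $M_k$ and for $B$, a minimal pair in $L_{\CM}$ whose open set in $\ZCM_S$ contains exactly the desired module; by Fact \ref{recov} such a pair automatically yields a unique isolated indecomposable pure injective point, so it suffices to produce the pair and to check that the target module satisfies it.

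The candidate pairs are read off from the pattern on Figure \ref{fig3}, which represents the interval $[v=0,vx=0]$ in $L_{\CM}$. For each target module I locate its pattern position and identify the pattern-covers, which correspond to the images of the chosen element under the irreducible morphisms in the AR-quiver. For $M_k$, located in the pattern at $(M_k, ny^{k-1})$ (so at $(M_1,n)$, $(M_2,ny)$, and so on), the irreducible morphisms $M_k \to X_k$ (which sends $n\mapsto n$) and $M_k\to Y_{k+1}$ (which sends $n\mapsto ny$) carry $ny^{k-1}$ to $ny^{k-1}\in X_k$ and $ny^k\in Y_{k+1}$ respectively, so I propose the pair
\[
(M_k, ny^{k-1})\,/\,(X_k, ny^{k-1}) + (Y_{k+1}, ny^k),
\]
where $+$ denotes the join in $L_{\CM}$. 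For $B$, the only irreducible morphism out is the inclusion $B\hookrightarrow Y_1=\mm$, which sends $xy\mapsto ny$, yielding the pair $(B, xy)\,/\,(Y_1, ny)$.

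Minimality is established by the method of Section \ref{S-cut1}. First I show the interval $[v=0,vx=0]$ in $L_{\CM}$ is distributive: on each indecomposable finitely generated $\CM$-module $M$ occurring on Figure \ref{fig3}, the submodule $\{m\in M: mx=0\}$ must be uniserial over $\End(M)$, and the only nontrivial case is $M=N_k$, where one writes down generating endomorphisms analogously to the proof of Lemma \ref{dist}. Distributivity together with the $+$-irreducibility of every pattern free realization then yields, as in Section \ref{S-cut1}, that a sum $\sum_i\phi_i$ of pattern-realizations implies a pattern-realization $\psi$ iff some $\phi_i\leq\psi$ in the pattern. Since the pattern-covers of $(M_k, ny^{k-1})$ on Figure \ref{fig3} are precisely $(X_k, ny^{k-1})$ and $(Y_{k+1}, ny^k)$, while the sole cover of $(B, xy)$ is $(Y_1, ny)$, the proposed pairs are minimal in $L_{\CM}$.

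The principal obstacle is the distributivity/uniseriality step, most notably the case $M=N_k$. Once minimality is established, Fact \ref{recov} produces a unique indecomposable pure injective point of each open set, and since $M_k$ (resp.\ $B$) manifestly satisfies the upper formula but fails the lower one — the generating element of the indecomposable cannot decompose as a sum of images coming from the two cover summands without inducing a splitting via the corresponding almost split sequence — these must be the isolated points.
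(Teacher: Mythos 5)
Your proof is correct and follows essentially the same route as the paper: in both cases the minimal pair is read off from the pattern on Figure \ref{fig3}, which encodes the left almost split morphisms $M_k\to X_k\oplus Y_{k+1}$ and $B\to Y_1$. The paper simply states this in one line — $M_k$ and $B$ are sources of $\AR$-sequences in $\CM_{fg}$, so they are isolated by the corresponding pairs — whereas you spell out the minimality check via the distributivity of $[v=0,vx=0]$ (already noted after Prop.~\ref{cut2}) and the $+$-irreducibility argument of Section~\ref{S-cut1}; this is more detail, not a different idea.
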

\begin{proof}
Directly from the diagram, because the $M_k$ and $B$ are sources of almost split sequences in the category
of finitely generated $\CM$-modules over $S$.

For instance, the module $M_1$ is isolated by the pair $n\in M_1$ over $(X_1,n)+ (Y_2,ny)$ which corresponds
to the left almost split map $M_1\to X_1\oplus Y_2$.
\end{proof}

It also follows that none of the remaining point is isolated in $T_{CM}= \ZCM_S$, in particular each isolated
point is isolated by a minimal pair. For instance, the lattice $L_1$ of pp-formulae in $T'_{CM}$ coincides with
the lattice obtained from the lattice $L$ of $\CM$-formulae by collapsing intervals of finite length. In
particular this holds true for each interval.

\begin{lemma}\label{2prime}
The interval $xy\in C$ over zero in $L_1$ is the following chain of order type $1+ \Z+ \om^*$.

\vspace{2mm}

\begin{center}
$
\xymatrix@C=20pt@R=12pt{%
*+={\circ}\ar@{}+<-18pt,0pt>*{_{xy\in C}}\ar@{-}[dr]&&\\
&*+={\circ}\ar@{}+<-18pt,-2pt>*{_{xy\in D}}\ar@{-}[dr]&\\
&&*+={\circ}\ar@{}+<-18pt,-2pt>*{_{xy^2\in C}}\ar@{}+<4pt,-3pt>*{.}\ar@{}+<8pt,-6pt>*{.}\ar@{}+<12pt,-9pt>*{.}\\
&&\\
&&*+={\circ}\ar@{}+<-18pt,0pt>*{_{n\in M_1}}\ar@{-}[dl]\ar@{}+<4pt,3pt>*{.}\ar@{}+<8pt,6pt>*{.}\ar@{}+<12pt,9pt>*{.}\\
&*+={\circ}\ar@{}+<-18pt,0pt>*{_{n\in X_1}}\ar@{-}[dl]&\\
*+={\circ}\ar@{}+<-18pt,0pt>*{_{xy\in B}}\ar@{}+<0pt,-4pt>*{.}\ar@{}+<0pt,-8pt>*{.}\ar@{}+<0pt,-12pt>*{.}&&\\
*+={\circ}\ar@{}+<8pt,0pt>*{_0}&&
}
$
\end{center}

\end{lemma}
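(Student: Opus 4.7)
The plan is to mimic the analysis of the first interval that culminated in Lemmas~\ref{d}--\ref{gx}. The approach proceeds by first establishing distributivity of $[v=0,\, xy\in C]$ in $L_{\CM}$, then identifying which pattern entries survive the passage to $L_1$, and finally reading off the order type of the resulting chain.

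For the first step I would repeat the proof of Lemma~\ref{dist}: by \cite[Prop.\ 4.4]{P-P}, distributivity reduces to checking that the submodule $\{m \in M : mx = 0\}$ of each indecomposable finitely generated $\CM$-module $M$ appearing in Figure~\ref{fig3} is uniserial over $\End_S(M)$, which is routine using the generators-and-relations descriptions from Section~\ref{S-fg}; the argument for $N_k$ is essentially the one given in Lemma~\ref{dist}. With distributivity in hand, the reasoning after Lemma~\ref{dist} shows that $[v=0,\, xy \in C]$ is recovered from the pattern of Figure~\ref{fig3} by adding only predictable finite sums that do not introduce new order relations beyond the ``trivial'' ones.

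Next I would identify the survivors in $L_1$. Lemma~\ref{mk} already isolates $B$ and the $M_k$ by minimal pairs in $T_{\CM}$. The same analysis applied to each $X_k,Y_k,N_k$ appearing as a pattern entry, each being the source or sink of an almost split sequence with two middle summands, shows that these are also isolated. Hence after collapsing finite intervals, the classes of pattern entries that survive in $L_1$ are precisely those lying on the three infinite corays/rays visible in Figure~\ref{fig3}: the descending $C$-$D$-coray $(C,xy),(D,xy),(C,xy^2),(D,xy^2),\dots$ at the top; the ascending $M$-ray $(M_1,n),(M_2,ny),(M_3,ny^2),\dots$; and the descending $B$-coray $(B,xy),(B,xy^2),\dots$. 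Reading these off: $(C,xy)$ gives the top ``$1$''; the $C$-$D$-coray immediately below it is an $\om$-chain descending, supplying $\om^*$; the ascending $M$-ray and the descending $B$-coray, linked through the collapsed bridge elements $(X_1,n),(S,xy),(X_1,xy)$, fuse into a bi-infinite $\Z$-chain in the middle; and $0$ supplies the bottom ``$1$''. This yields the order type $1+\Z+\om^*$ from smallest to largest.

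The main obstacle is verifying the $\Z$-structure in the middle: one must check that every survivor on the $M$-ray is strictly above every survivor on the $B$-coray in $L_1$, that the bridge elements $(X_1,n),(S,xy),(X_1,xy)$ indeed collapse to a single bridging equivalence class, and that no further pattern entry survives between the $C$-$D$-coray and the top of the $\Z$-chain. This amounts to routine pp-type comparison guided by Figure~\ref{fig3}, cross-referenced with the fact that the infinitely generated pure injective $\CM$-modules $\wt R$, $Q_R$, and $G_y$ must realize the critical cuts corresponding to the top $\om^*$, the middle $\Z$, and the bottom of the $\Z$-chain respectively.
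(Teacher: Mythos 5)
Your high-level strategy --- determine which pp-formulae collapse to a common class in $L_1$ and then read off the chain --- is the same as the paper's, but the actual mechanism you propose for identifying the collapsing classes is both different from the paper's and, in one place, concretely incorrect.

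The paper's proof is entirely by \emph{direct finite-length calculations}: it computes that the interval $[(N_1,ny),(X_1,n)]$ in $L$ has length 2 (hence collapses), and more generally that each consecutive interval along the southeastern ray $(X_1,n)\to(N_1,ny)\to(X_2,ny)\to\dots$ has finite length, so that entire ray collapses into a single class of $L_1$; on the other hand $[(B,xy),(X_1,n)]$ contains the infinite descending chain $(B,xy)+(X_{k+1},ny^k)$, hence survives as a simple interval in $L_1$. Your proposal replaces this with an inference from \emph{isolation of the points} $X_k,Y_k,N_k$ to \emph{collapse of intervals}. That inference is not automatic: isolation of a module $M$ by a minimal pair tells you one particular cover $\psi<\phi$ disappears in $L_1$; it does not, by itself, determine which pp-formulae become identified with which. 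To compute $L_1$ you still have to check interval lengths in $L$, including the sums, and this is what ``by direct calculations'' actually covers. So you would need to add the interval-length computations in any case, at which point the appeal to isolation is doing no work.

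There is also a concrete error in your description of the middle $\Z$-chain. You assert that the ``collapsed bridge elements $(X_1,n),(S,xy),(X_1,xy)$'' fuse into a single class sitting between $(M_1,n)$ and $(B,xy)$. But consulting Figure~\ref{fig3}, the entries $(S,xy)$ and $(X_1,xy)$ lie strictly \emph{below} $(B,xy)$ in the pattern (they sit on the descent $(B,xy)>(Y_1,ny)>(S,xy)>(X_1,xy)>(B,xy^2)$), whereas $(X_1,n)$ lies strictly \emph{above} $(B,xy)$. An equivalence class in $L_1$ is an interval of $L$, so these cannot share a class with $(X_1,n)$ without also absorbing $(B,xy)$. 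The single element of the chain between $(M_1,n)$ and $(B,xy)$ is the class of $(X_1,n)\sim(N_1,ny)\sim(X_2,ny)\sim\dots$, i.e.\ the collapsed southeastern ray, exactly as the paper computes; $(S,xy)$ and $(X_1,xy)$ belong to classes further down. Consequently your description of the survivors as lying on exactly the three rays $C$-$D$, $M$, $B$ also misses this collapsed ray class, whose representative appears explicitly (as $n\in X_1$) in the statement's chain.

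Finally, the appeal to distributivity via Lemma~\ref{dist} is not a step the paper takes in this proof. Distributivity of $[v=0,\,vx=0]$ is noted once after Proposition~\ref{cut2} as a consequence of the free-sum generation, and is relevant for reading the lattice structure off the pattern, but the argument for Lemma~\ref{2prime} is the finite-length computation. Your first step is therefore not wrong, but it does not substitute for the work that the lemma actually requires.
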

\begin{proof}
By direct calculations. For instance the interval $n\in X_1$ over $ny\in N_1$ includes only one additional formula
$(B,xy)+ (N_1,ny)$, hence has length 2, and therefore becomes trivial in $L_1$. Similarly each consecutive
interval along the southeastern ray $(X_1,n)\to (N_1,ny)\to (X_2, ny)\to \dots$ has finite length in $L$, hence
gets collapsed in $L_1$.

On the other hand, the interval $(X_1,n)$ over $(B,xy)$ contains the descending chain $(B,xy)+ (X_{k+1}, ny^k)$,
$k= 1, 2, \dots$, hence produces a nontrivial (in fact simple) interval in $L_1$.
\end{proof}

Recall that $C$ and $\wt R$ have $\CB$-rank 1 in $\ZCM_R$, therefore their ranks in $\ZCM_S$ cannot be smaller.
To our surprise these ranks are preserved in the ambient space.

\begin{lemma}\label{rk-c}
$C= xyS$ is isolated in $T'_{CM}$ by the minimal pair $(C,xy)$ over $(D,xy)$, in particular $\CB(C)= 1$.
\end{lemma}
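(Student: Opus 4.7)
The plan is to prove $\CB(C)\geq 1$ and $\CB(C)\leq 1$ separately, with the claimed minimal pair emerging from Lemma \ref{2prime}.

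For the lower bound I would invoke the closed embedding $\ZCM_R\seq\ZCM_S$ from Section \ref{S-intro}. Since $C$ has Cantor--Bendixson rank $1$ inside $\ZCM_R$ by \cite{Pun16}, it cannot be isolated in the ambient space $T_{CM}=\ZCM_S$ either. Figure \ref{fig3} makes this concrete: each pointed module $n\in M_k$ lies strictly below $xy\in C$ in the pattern but is not below $xy\in D$, so every $M_k$ (and not only $C$) belongs to the minimal open set $((C,xy)/(D,xy))$, producing infinitely many distinct points of $T_{CM}$ in every basic neighbourhood of $C$.

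For the upper bound, Lemma \ref{2prime} already identifies $(D,xy)$ as the unique cover of $(C,xy)$ in the lattice $L_1$ of pp-formulae of $T'_{CM}$, so $(C,xy)/(D,xy)$ is a minimal pair there. It remains to check that the corresponding open set contains no point of $T'_{CM}$ besides $C$. Translating the two formulas: $(C,xy)$ is equivalent to $vx=0$ and $(D,xy)$ is equivalent to $vx=0\wg y\mid v$, so in each candidate $M$ I must locate $m\ne 0$ with $mx=0$ and $m\notin My$.

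By Theorem \ref{points} and Lemmas \ref{isol}, \ref{mk}, \ref{d}, \ref{n-isol}, \ref{gx} (supplemented by the ranks of $\wt R,Q_R,G_y$ inherited from \cite{Pun16} via the closed embedding), the points of $T'_{CM}$ are $C,D,N,G_x,\wt R,Q_R,G_y$. For $C$ the element $xy$ works by inspection. For $D$ one has $\Ann_D(x)=xyS=Dy$; on $G_x=F((x))$ multiplication by $x$ is invertible, so there is no $x$-torsion; for $\wt R, Q_R, G_y$ the decomposition in Lemma \ref{qs-dec} places the $x$-torsion inside the $Q_R$-component, where $y$ acts invertibly, so every $x$-annihilated element is $y$-divisible. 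The remaining module $N$ is where the argument needs care.

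For $N$ I would first observe that $\Ann_N(x)$ lies in the submodule generated by the $n_k$'s together with the $xy$-multiples of $1$, using $\Ann_S(x)=xyS$ and $n_kx=0$. Then, because the direct limit defining $N$ identifies $n_{k+1}y=n_k$ (and $n_1y=xy$), every generator of $\Ann_N(x)$ is visibly of the form $(\text{something})\cdot y$, so $\Ann_N(x)\seq Ny$. This is the main technical obstacle of the proof; once it is in place, $N$ is excluded from the open set, and $C$ is left as the unique point of $T'_{CM}$ in $((C,xy)/(D,xy))$, giving $\CB(C)=1$.
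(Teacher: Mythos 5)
Your proof is correct and follows essentially the same two-step strategy as the paper: use Figure \ref{fig3} (the $M_k$'s in every neighbourhood) to show $C$ is not isolated in $T_{CM}$, and use Lemma \ref{2prime} to identify $(C,xy)/(D,xy)$ as a minimal pair in $L_1$. The paper then simply invokes the fact that a minimal pair isolates a unique point, whereas you supplement this with an explicit case-by-case verification that no other point of $T'_{CM}$ has an $x$-torsion element outside $My$; that extra check is sound (and your translation of $(C,xy)$ and $(D,xy)$ into $vx=0$ and $vx=0\wg y\mid v$ is correct), but it is material the paper leaves to the general theory rather than a genuinely different route.
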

\begin{proof}
From Figure \ref{fig3} we see that the basis of open sets for $C$ is given by pairs $xy\in C$ over
$(D,xy)+ (M_k,n)$. By evaluating it is easily seen that any such pair is open on $M_{k+1}$, therefore $C$
is not isolated.

On the other hand in $L_1$ the interval $(C,xy)$ over $(D,xy)$ is simple, hence isolates $C$.
\end{proof}

Now we proceed with $\wt R$.

\begin{lemma}\label{tr-rank}
$\wt R$ is isolated in $T'_{CM}$ by the minimal pair $(M_1,m)$ over $(X_1,n)$, in particular $\CB(\wt R)= 1$.
Further $\wt R$ is neg-isolated.
\end{lemma}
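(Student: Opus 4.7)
The plan is to follow the template of Lemma \ref{n-isol} (for $N$) and Lemma \ref{rk-c} (for $C$): choose a suitable element of $\wt R$, show that its pp-type defines a nontrivial cut on the interval $[(X_1,n),(M_1,m)]$, describe a basis of open sets for $\wt R$ in $\ZCM_S$, and collapse finite subintervals in passing from $L_{CM}$ to $L_1$.

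Take the element $1 \in \wt R$. The witness $w = xy^{-1} \in xQ_R \subseteq \wt R$ satisfies $wy = x$ and $wx = x^2 y^{-1} = 0$ (since $x^2 = 0$ in $R$, hence in $\wt R$), so $\wt R \models (M_1, m)(1)$. Since the formula $(X_1, n)$ implies $vx = 0$ and $1 \cdot x = x \neq 0$ in $\wt R$, we have $(X_1, n) \in p^- = pp_{\wt R}(1)^-$; thus the pair $(M_1, m)/(X_1, n)$ is open on $\wt R$. Composing the irreducible morphisms $M_j \to Y_{j+1} \to M_{j+1}$ along the appropriate coray of Figure \ref{fig1} gives pointed morphisms $(M_1, m) \to (M_k, m)$ for each $k \geq 1$, and the witnesses $w_k = xy^{-k} \in xQ_R$ show $\wt R \models (M_k, m)(1)$ for all $k$. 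Direct evaluation in each finitely generated $M_{k'}$ with $k' > k$ shows that the pair $(M_k, m)/(X_1, n)$ is also open there (with witness the generator $m \in M_{k'}$); so $\wt R$ is not isolated in $T_{CM}$, and the family $\{(M_k, m)/(X_1, n) : k \geq 1\}$ is a basis of open sets for $\wt R$.

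Passing to the lattice $L_1$ of $T'_{CM}$, every $L_{CM}$-subinterval of finite length collapses. An enumeration of the pp-formulas between consecutive $(M_k, m)$, analogous to Lemma \ref{2prime}, shows each $[(M_{k+1}, m), (M_k, m)]$ has finite length in $L_{CM}$, so all $(M_k, m)$ are identified with $(M_1, m)$ in $L_1$ while $(M_1, m) > (X_1, n)$ persists. Hence $(M_1, m)/(X_1, n)$ is a minimal pair in $L_1$, and since every $M_{k'}$ has been deleted from $T'_{CM}$, it isolates $\wt R$ there, yielding $\CB(\wt R) = 1$. The basis above shares the single lower formula $(X_1, n)$, which exactly as in Lemma \ref{n-isol} witnesses that $\wt R$ is neg-isolated in $T_{CM}$.

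The main obstacle will be verifying the finite-length claim for the subintervals $[(M_{k+1}, m), (M_k, m)]$ in $L_{CM}$; this requires a case analysis using the classification of indecomposable finitely generated $\CM$-modules and the irreducible morphisms from Section \ref{S-fg}, in the spirit of the interval calculations underlying Lemmas \ref{n-isol} and \ref{rk-c}.
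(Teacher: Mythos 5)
Your choice of the base element $1\in\wt R$ takes you outside the interval the paper has actually computed, and this is where the argument breaks down. The paper's proof works with the pointed module $(\wt R, xy)$ and, crucially, stays inside the pattern of $(C,xy)$ shown on Figure \ref{fig3}, i.e.\ inside the interval $[v=0,\,vx=0]$ whose $L_1$-structure has been determined in Lemma \ref{2prime}. Since $1\cdot x\neq 0$ in $\wt R$, your cut does not live in that interval at all, so none of the computations of Section \ref{S-cut2} apply to it, and you have to build the lattice of the interval $[(X_1,n),(M_1,m)]$ from scratch.

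That interval, however, is not of the form required for your conclusion. Because there is a pointed morphism $(M_1,m)\to(C,xy)$ (send $m\mapsto xy$, $n\mapsto 0$) and every $\CM$-module is annihilated under the image of this map only after $v=0$, one gets $(X_1,n)\le(C,xy)\le(M_1,m)$, with both inequalities strict. Hence the whole descending chain $(C,xy)>(D,xy)>(C,xy^2)>\dots>(M_1,n)>(X_1,n)$ from Lemma \ref{2prime} sits strictly inside $[(X_1,n),(M_1,m)]$, and it does not collapse when passing to $L_1$. So even if your deferred computation showing each $[(M_{k+1},m),(M_k,m)]$ has finite length were carried out, the image of $(M_1,m)$ in $L_1$ would still sit above the infinite chain, and $(M_1,m)/(X_1,n)$ would not be a minimal pair. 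The step "hence $(M_1,m)/(X_1,n)$ is a minimal pair in $L_1$" is therefore false, and $\CB(\wt R)=1$ does not follow from your set-up. (The root of the trouble is that the statement reads $(M_1,m)$ where the proof and Figure \ref{fig3} — which display $n\in M_1$ and $n\in X_1$ adjacent in the pattern of $(C,xy)$, and the corresponding simple interval in Lemma \ref{2prime} — make clear the intended formula is $(M_1,n)$; working with $m$ instead of $n$, and with the element $1$ instead of an $x$-torsion element of $\wt R$, discards exactly the structure that makes the argument short.)

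To repair the argument one should instead use an $x$-torsion element of $\wt R$ — for instance the generator $xy(x+y)^{-1}$ of $xyQ_S$ inside $\wt R_S$ — so that the relevant cut lies in the interval $[v=0, vx=0]$ of Figure \ref{fig3}, and then read off, exactly as in Lemmas \ref{rk-c} and \ref{n-isol}, that the positive part of the cut runs down the southeastern $M_k$-ray while $(X_1,n)$ neg-isolates, giving the minimal pair $(M_1,n)/(X_1,n)$ in $L_1$.
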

\begin{proof}
From Figure \ref{fig3} we see that the basis for the pointed module $(\wt R, xy)$ is given by the ray heading
from $(M_1,n)$ southeastern, hence by the pairs $ny^{k-1}\in M_k$ over $n\in X_1$. In particular the latter
formula neg-isolates the pp-type of $xy$ in $\wt R$. Since each such pair opens $M_{k+1}$, this point is not
isolated. It remains to look at the above diagram.
\end{proof}

It remains to consider the points $Q_R$ and $G_y$ whose $\CB$-rank is at least 2. From the above analysis it
follows that the lattice of pp-formulae of $T''_{CM}$ coincides with the lattice $L_2$ obtained from $L_1$ by
collapsing the finite length intervals. From the above diagram we conclude the the interval $xy\in C$ over
zero in $L_2$ is the following chain of length 2.

\vspace{2mm}

\begin{center}
$
\xymatrix@C=20pt@R=16pt{%
*+={\circ}\ar@{}+<18pt,0pt>*{_{xy\in C}}\ar@{-}[d]\\
*+={\circ}\ar@{}+<18pt,0pt>*{_{xy\in B}}\ar@{-}[d]\\
*+={\circ}\ar@{}+<8pt,0pt>*{_0}
}
$
\end{center}

\vspace{3mm}

The simple intervals in this chain will catch the remaining points of $\ZCM_S$.

\begin{lemma}\label{qr}
$Q_R$ is isolated in $T''_{CM}$ by the minimal pair $(B,xy)$ over $0$, hence $\CB(Q_R)=2$. Further, $G_y$ is
isolated in $T''_{CM}$ by the minimal pair $(C,xy)$ over $(B,xy)$, hence $\CB(G_y)= 2$.
\end{lemma}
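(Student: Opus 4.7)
The plan is to leverage the $m$-dimension analysis already set up, combined with direct evaluation of the two pp-formulas on the three remaining points. By the earlier lemmas of this section, $T''_{CM} = \{Q_R, G_y, G_x\}$, and the interval $[v=0, (C,xy)]$ has already been shown to collapse in $L_2$ to the three-chain $0 < (B,xy) < (C,xy)$. Both pairs named in the statement are therefore automatically simple (hence minimal) in $L_2$, so it only remains to check which of the three points lies in each of the corresponding basic open sets.

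To this end I would first translate the pp-formulas into concrete definable subgroups via the pointed-morphism description. Since $C = xyS$ is generated by $xy$ with single relation $vx = 0$, the formula $(C, xy)$ defines $\{m \in M : mx = 0\}$ in any module $M$; and since $B = yS$ is generated by $y$ with relation $vx^2 = 0$, the formula $(B, xy)$ defines $\{m'x : m' \in M,\ m'x^2 = 0\}$.

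Next I would evaluate both definable subgroups on each of $Q_R$, $G_y$, $G_x$ using their known structure: $Q_R$ is annihilated by $x^2$ and consists of elements of the form $a + bx$ with $a, b \in F((y))$; on $G_y$ the element $x$ acts as zero; on $G_x$ it acts invertibly. A short direct calculation then shows that on $Q_R$ one has $(B, xy)(Q_R) = xQ_R \neq 0$ and that this coincides with $(C, xy)(Q_R)$ (since $(a+bx)x = ax$); on $G_y$ one has $(B, xy)(G_y) = 0$ while $(C, xy)(G_y) = G_y$; and on $G_x$ both subgroups vanish. Consequently $Q_R$ is the unique point of $T''_{CM}$ opening the pair $(B,xy)/0$, whereas $G_y$ is the unique point opening $(C,xy)/(B,xy)$, which gives both isolations by minimal pairs and hence $\CB(Q_R) = \CB(G_y) = 2$.

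The one delicate step is the coincidence $(C, xy)(Q_R) = (B, xy)(Q_R)$: the two pp-formulas are globally distinct in $L_2$ (witness $G_y$), and it is exactly this \emph{local} equality on $Q_R$ that prevents $Q_R$ from belonging to the open set that isolates $G_y$. Everything else reduces to direct inspection of the three modules.
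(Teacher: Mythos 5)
Your proof is correct, and in fact the paper gives no proof at all for this lemma---it is stated as a corollary of the displayed chain in $L_2$ immediately preceding it. Your fill-in by direct evaluation is a sound and welcome complement to that implicit argument: you translate $(C,xy)$ and $(B,xy)$ correctly (freely realizing $vx=0$ and $\exists w\,(v=wx\wedge wx^2=0)$ respectively, since $\ann_S(xy)=xS$ and $\ann_S(y)=x^2S$), your computations of the definable subgroups on $Q_R$, $G_y$, $G_x$ are all right, and you correctly single out the key point---the \emph{local} coincidence $(C,xy)(Q_R)=(B,xy)(Q_R)=xQ_R$---that separates the two isolations. The paper's implicit route goes via the $m$-dimension machinery (the two simple subintervals of the three-chain $0<(B,xy)<(C,xy)$ in $L_2$ each isolate exactly one point of $T''_{CM}$) without ever evaluating the formulas; your version makes the identification explicit at the cost of appealing to the concrete module structures, which is arguably clearer.

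One small looseness: you assert $T''_{CM}=\{Q_R,G_y,G_x\}$ "by the earlier lemmas of this section", but those lemmas only yield the inclusion $T''_{CM}\subseteq\{Q_R,G_y,G_x\}$ by showing all other points of $\ZCM_S$ have $\CB$-rank $\le 1$ (and that $G_x$ has rank $2$). The lower bound $\CB(Q_R),\CB(G_y)\ge 2$ does not come from those lemmas but from the monotonicity of Cantor--Bendixson rank along the closed embedding $\ZCM_R\subseteq\ZCM_S$ together with the fact, recalled at the start of Section~\ref{S-cut2} from \cite{Pun16}, that these two points already have rank $2$ in $\ZCM_R$. It is worth citing that explicitly; otherwise the argument as written would only prove that \emph{if} $Q_R$ and $G_y$ lie in $T''_{CM}$, they are isolated there.
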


Now we conclude on the global value of the $\CB$-rank.

\begin{theorem}\label{cb-rk}
The Cantor--Bendixson rank of the Cohen--Macaulay part of the Ziegler spectrum of $S$ equals 2. Further
the same holds true for the Krull--Gabriel dimension of the (definable) category of Cohen--Macaulay $S$-modules.
\end{theorem}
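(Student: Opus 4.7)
The plan is to assemble the results of all the preceding rank computations into a single argument. By Theorem \ref{points}, every point of $\ZCM_S$ is either a finitely generated indecomposable $\CM$-module, or one of $Q_R$, $\wt R$, $N$, $G_y$, $G_x$, so it suffices to verify that each point from this (now complete) list has been assigned a Cantor--Bendixson rank of $0$, $1$, or $2$, and that ranks $2$ actually occur. Every finitely generated indecomposable $\CM$-module appears in the pattern of Figure \ref{fig2} or in the pattern of Figure \ref{fig3}; by Lemma \ref{isol} and Lemma \ref{mk} all of them except $C$ and $D$ are isolated in $T_{CM}$ (so of $\CB$-rank $0$) by a minimal pair. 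Lemmas \ref{d}, \ref{n-isol}, \ref{rk-c} and \ref{tr-rank} then give $\CB$-rank exactly $1$ for $D$, $N$, $C$ and $\wt R$, each isolated in $T'_{CM}$ by a minimal pair. Finally Lemmas \ref{gx} and \ref{qr} give $\CB$-rank exactly $2$ for $G_x$, $Q_R$ and $G_y$, each isolated in $T''_{CM}$ by a minimal pair.

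With the classification exhausted, $T'''_{CM}$ contains no points, so the Cantor--Bendixson rank of $\ZCM_S$ equals $2$, and the value is attained (for instance by $G_x$, as noted in Lemma \ref{gx}).

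For the statement about the Krull--Gabriel dimension, I would invoke the standard comparison between the $\CB$-analysis of a closed subset of the Ziegler spectrum and the $m$-dimension analysis of the corresponding lattice of pp-formulae, as recalled in Section \ref{S-zieg}. The isolation condition is the only thing to check, and it has been established explicitly at every stage: the cited lemmas show that on each derivative $T^{(i)}_{CM}$ ($i=0,1,2$) each isolated point is isolated by a minimal pair. Under this condition the $m$-dimension of the lattice of pp-formulae of $T_{CM}$ (equivalently the Krull--Gabriel dimension of the definable category generated by finitely generated $\CM$-modules over $S$) agrees with the $\CB$-rank of $\ZCM_S$, and therefore also equals $2$.

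At this stage there is no genuine obstacle left: the work is entirely in the earlier lemmas, which on the one hand produce the two patterns of Figure \ref{fig2} and Figure \ref{fig3} (covering all pp-types via the two intervals $[v{=}0,\, x^2\mid v]$ and $[v{=}0,\, vx{=}0]$) and on the other hand verify at each stage that the minimal--pair isolation condition holds. The present theorem is the bookkeeping that records rank $\leq 2$ everywhere, rank $=2$ at $G_x$, $Q_R$, $G_y$, and the automatic transfer from $\CB$-rank to Krull--Gabriel dimension via the isolation condition.
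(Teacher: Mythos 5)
Your argument is correct and follows essentially the same route as the paper: assemble the point classification from Theorem \ref{points}, read off the per-point $\CB$-ranks from the lemmas of Sections \ref{S-zieg} and \ref{S-cut2} culminating in rank $2$ for $G_x$, $Q_R$, $G_y$, and then transfer to Krull--Gabriel dimension via the isolation condition, which those lemmas verify stage by stage. The only difference is that you spell out the bookkeeping and lemma citations explicitly where the paper compresses them into a single sentence.
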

\begin{proof}
The first part follows from classification of points in $\ZCM_S$ (see Theorem \ref{points}), and the values
of their ranks calculated in this and the previous section. Further at each stage of the $\CB$-analysis, each
isolated point is isolated by the minimal pair. It follows that the Krull--Gabriel dimension of the category of
$\CM$-modules coincides with the $m$-dimension of the lattice $L$, and with the $\CB$-rank of the space $\ZCM_S$,
which have been already given the value.
\end{proof}

Note that the basic open set $(vx=0/v=0)$ contains all points in $\ZCM_S$, but $A$ and $G_x$.

\section{Auslander--Reiten sequences}

It follows from the above analysis that almost split sequences we mentioned in Section \ref{S-fg} preserve their
defining properties in the category of all (finitely generated or not) $\CM$-modules.

In this section we will produce few more $\AR$-sequences that will be used in the next section to glue a surface
from the category of finitely generated $\CM$-modules over $S$. Though no direct reference is possible, the
ideology goes back to Auslander \cite{Aus} who constructed almost split sequences in the category of all
$U$-modules (over any ring $U$) targeting a finitely presented module with a local endomorphism ring. As
follows from Herzog's interpretation of this result (see \cite{Her}), the source of this $\AR$-sequence is an
indecomposable pure injective module which is neg-isolated.

This was how we guessed almost split sequences in the category of $\CM$-modules over $S$ with infinitely generated
terms. Namely there are few indecomposable infinitely generated pure injective $\CM$-modules which are neg-isolated, 
hence few candidates for sources of $\AR$-sequences. It was difficult to guess, but it is surprisingly easy to 
check the required properties.

Recall that $N$ is identified with the submodule of $Q_S$ generated by $1$ and the $n_k= xy(x+y)^{-k}$. Further
$N'$ is the submodule of $N$ generated by the $n_k$, hence $N'= xyQ_S$. Then the factor $N/N'$ is isomorphic to
$D= xS$ via the map which sends $\ov 1$ to $x$. We will also identify $\wt R$ with the submodule $yS+ N'$ of $Q_S$.
In particular $\wt R x= Sxy$ and $\wt R/\wt Rx\cong yS/xyS$, which is isomorphic to $S/xS\cong C= xyS$.

With this identification in mind we will produce the first $\AR$-sequence with infinitely generated terms.

\begin{lemma}\label{ar1}
The following is an almost split sequence in the category of $\CM$-modules over $S$.

$$
0\to \wt R\xr{f= (\ih_1, \pi_1)} N\oplus C\xr{u= (\pi'_1, -\ih'_1)} D\to 0\,. \label{1}
$$

Here $\ih_1$ denotes the inclusion $\wt R= yS+ N'\subset N$, and $\pi_1$ is the above epimorphism
$\wt R\to \wt R/\wt Rx\cong C$ which sends $y\in\wt R$ to $xy\in C$, hence is given by multiplication by $x$.

Further $\pi'_1$ is a composition of the epimorphism $N\to N/N'$ with the isomorphism $N/N'\cong D= xS$ such
that $1\mapsto x$, hence is given by multiplication by $x$; and $\ih'_1$ is the inclusion $C\subset D$.
\end{lemma}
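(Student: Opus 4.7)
The plan is to split the proof into three parts: exactness of the displayed sequence, non-splitness, and the right almost split property.

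\textbf{Exactness.} I would verify this by direct computation inside $Q_S$, using the realization $\wt R = yS + N'$ where $N' = xyQ_S$. Injectivity of $f = (\ih_1, \pi_1)$ is immediate because $\ih_1$ is an inclusion. Surjectivity of $u$ follows since $\pi'_1$ is surjective: $1 \in N$ maps to the generator $x$ of $D$. The composition $u \circ f$ vanishes because for $r \in \wt R \seq N$, both $\pi'_1 \circ \ih_1$ and $\ih'_1 \circ \pi_1$ send $r$ to $rx$, directly in $D$ and through the inclusion $C \hookrightarrow D$ respectively. For the remaining inclusion $\ker u \seq \im f$: given $(n,c) \in \ker u$, write $n = \alpha \cdot 1 + \beta$ with $\alpha \in S$ and $\beta \in N'$; then $\pi'_1(n) = \alpha x$ is forced to equal $c = xy\gamma \in xyS$, which gives $\alpha - y\gamma \in \Ann_S(x) = xyS$ and hence $\alpha \in yS$. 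Therefore $n \in yS + N' = \wt R$, and $f(n) = (n, nx) = (n, c)$ since $\beta x \in x^2 y Q_S = 0$.

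\textbf{Non-splitness.} If the sequence split, then $\wt R$ would be a direct summand of $N \oplus C$. Since $\wt R$ and $N$ are pure injective indecomposables (Lemma \ref{n} and Theorem \ref{points}) and $C$ is a finitely generated indecomposable $\CM$-module, Krull--Schmidt for modules with local endomorphism rings would force $\wt R \cong N$ or $\wt R \cong C$. Both are impossible: $\wt R \cdot x^2 = 0$ because $yS \cdot x^2 = 0$ and $N' \cdot x^2 \seq x^3 y Q_S = 0$, whereas $1 \cdot x^2 = x^2 \neq 0$ in $N$; and $\wt R$ is not finitely generated, ruling out $C$.

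\textbf{Right almost split property.} This is the substantive step. I would invoke the Auslander--Herzog correspondence, hinted at in the preamble of the section, linking neg-isolated points of the Ziegler spectrum to $\AR$-sequences ending in a finitely generated indecomposable with local endomorphism ring. By Lemma \ref{tr-rank}, $\wt R$ is neg-isolated in $\ZCM_S$ with witness $xy \in \wt R$, and $\End(D)$ is local, so an $\AR$-sequence ending in $D$ with source $\wt R$ ought to exist. One then checks that $u$ is right almost split by factoring any non-retraction $g: X \to D$ through it: for indecomposable finitely generated $X$ the pattern in Figure \ref{fig3} reduces this to a short finite list of cases, while for infinitely generated $X$ the classification of pure injective points from Theorem \ref{points}, together with the fact that $D$ is finitely presented, lets us reduce to the finite case via direct limits.

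\textbf{Main obstacle.} The hardest part is pinning down that $\wt R$ really is the source of the $\AR$-sequence ending in $D$ (rather than some other neg-isolated candidate like $N$) and producing explicit lifts. A helpful observation to fall back on is that the displayed square with vertices $\wt R, N, C, D$ is a pullback of $\pi'_1$ along $\ih'_1$; lifting a non-retraction $g: X \to D$ then reduces to producing $h: X \to N$ with $\pi'_1 h - g$ landing in $C$, a problem which becomes tractable once $g$ is decomposed along the radical $\rad(D) = x^2 S + xyS$.
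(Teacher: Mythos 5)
Your exactness computation is fine and essentially matches the paper's (your ``$\ker u\seq\im f$'' argument is a bit more explicit, and your non-splitness remark is a useful supplement the paper leaves implicit). The genuine divergence, and the gap, is in the almost split step.

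You propose to verify that $u$ is \emph{right} almost split, i.e.\ to factor arbitrary non-retractions $g\colon X\to D$ through $u$. The paper instead uses the local endomorphism ring of $D$ (via Auslander's Prop.\ 4.4, which the paper cites precisely for this reduction) to pass to the \emph{other} side: it suffices to show that $f$ is \emph{left} almost split, i.e.\ to factor arbitrary non-split-monos $g\colon\wt R\to K$ through $f$. Both reductions are legitimate, but the left-hand one is what makes the argument close. Given $g\colon\wt R\to K$ with $g(xy)\neq 0$: since $g$ is not a split mono it strictly enlarges the pp-type of $xy\in\wt R$, so the neg-isolating formula $n\in X_1$ (Lemma \ref{tr-rank}) lies in $pp_K(g(xy))$; by filter closure the entire northeastern ray lies in $pp_K(g(xy))$, hence $pp_N(xy)\seq pp_K(g(xy))$ and a pointed morphism $N\to K$ exists; subtracting it, one may assume $g(xy)=0$, and then $g$ kills $xyQ_S$ because $K$ is Cohen--Macaulay, so $g$ factors through $\pi_1$. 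The neg-isolation you cite is therefore not just evidence that an $\AR$-sequence with source $\wt R$ ``ought to exist'' --- it is the engine that produces the explicit lift.

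Your own right-almost-split verification is not carried out. The ``short finite list of cases'' for finitely generated indecomposable $X$ is not finite (there are the infinite families $X_k, Y_k, M_k, N_k$ to handle, and Figure \ref{fig3} is the pattern of $(C,xy)$, not of maps into $D$), and the claimed reduction for infinitely generated $X$ via direct limits is asserted but never justified --- a compatible system of partial lifts is an inverse limit of nonempty sets and may be empty without further argument. You flag exactly this as the ``main obstacle'' and then leave it open, so the proposal as written does not establish the lemma.
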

\begin{proof}
It is not difficult to check that this sequence is exact. For instance $uf(y)= u(y,xy)= xy-xy=0$. Further
the cokernel of $f$ is obtained by identifying $y\in N$ with $-xy\in C$, hence kicks off $N'$, and the resulting
module is isomorphic to $N/N'= D$.

Since the endomorphism ring of the target $D$ is local, it follows from \cite[Prop. 4.4]{Aus} that it
suffices to check that $f$ is left almost split. Suppose that $K$ is a $\CM$-module over $S$, and
$g: \wt R\to K$ is a morphism which is not a split monomorphism. We need to find $h: N\oplus C\to K$
completing the following diagram.

\begin{center}
$
\xymatrix@C=20pt@R=14pt{%
*+{\wt R}\ar[r]^(.4)f\ar[d]_g&*+{N\oplus C}\ar@{-->}[dl]^h\\
*+{K}&
}
$
\end{center}

\vspace{2mm}

Note that $f(xy)= (xy, 0)$ belongs to $N$.

Assume first that $g(xy)\neq 0$ and let $q$ be the pp-type of $g(xy)$ in $K$. Since $g$ is not a split monomorphism,
it increases the pp-type $p$ of $xy\in \wt R$, which (see Figure \ref{fig3}) is neg-isolated  by the formula
$n\in X_1$, hence this formula is in $q$. Since $q$ is a filter, by taking conjunctions we obtain that each formula
in the northeastern ray starting from $n\in X_1$ belongs to $p$. We conclude that $q$ includes each
formula in the pp-type of $xy\in N$. It follows that there exist a morphism $h: N\to K$ (considered as a morphism
$N\oplus C\to K$) such that $g(xy)= hf(xy)$.

Thus by considering $g'= g- hf: \wt R\to N\oplus K$ we may assume that $g(xy)=0$. Since $K$ is Cohen--Macaulay,
it follows that $g(xyQ_S)= 0$, therefore $g$ factors through $\wt R/xyQ_S\cong C$, hence through $\pi_1$.
\end{proof}

By swapping $C$ with $D$, and $\wt R$ with $N$, we obtain a 'dual' almost split sequence.

\begin{lemma}\label{ar2}
The following is an almost split sequence in the category of $\CM$-modules over $S$.

$$
0\to N\xr{f= (\ih_2, \pi'_1)} \wt R\oplus D\xr{u= (\pi_1, -\ih'_2)} C\to 0\,. \label{2}
$$

Here $\ih_2: N\to \wt R$ and $\ih'_2: D\to C$ are given by multiplication by $y$. The epimorphisms
$\pi_1$ and $\pi'_1$ are defines in Lemma \ref{ar1}.
\end{lemma}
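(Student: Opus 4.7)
The plan is to follow the template of Lemma \ref{ar1}, exploiting the duality $\wt R \leftrightarrow N$, $C \leftrightarrow D$ which swaps the roles of source and target. First I would verify exactness: $uf(n) = x(yn) - y(xn) = 0$; for injectivity of $f$, if $f(n) = 0$ then $xn = 0$ in $D$ forces $n \in \ker \pi'_1 = N'$, and then $yn = 0$ with $n \in N' \cong F((y))$ (where $y$ acts invertibly) forces $n = 0$; surjectivity of $u$ follows from $u(ys, 0) = xys$; and exactness in the middle is a routine verification using $\wt R = yS + N'$.

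By \cite[Prop. 4.4]{Aus}, since $\End_S(C) \cong F[[y]]$ is local, it suffices to show $f$ is left almost split in $\CM_S$. So take $g: N \to K$ with $K \in \CM_S$ not a split monomorphism, and construct $h = (h_1, h_2): \wt R \oplus D \to K$ with $g = hf$. The key element is $x^2 \in N$, whose image $f(x^2) = (yx^2, x\cdot x^2) = (0, x^3)$ lies entirely in the $D$-component because $yx^2 = 0$ in $S$.

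Suppose first that $g(x^2) \neq 0$. By Lemma \ref{n-isol}, the pp-type of $x^2 \in N$ is neg-isolated in $T_{CM}$ by the formula freely realized by $(D, x^3)$. Since $g$ is not a split monomorphism, $pp_K(g(x^2)) \supsetneq pp_N(x^2)$, and any extra formula lies in $pp_N(x^2)^-$, hence is dominated by the neg-isolating formula; since $pp_K(g(x^2))$ is a filter, this dominating formula itself belongs to it. Hence there is a pointed morphism $h_2: (D, x^3) \to (K, g(x^2))$, and after replacing $g$ by $g - h_2 \pi'_1$, we may assume $g(x^2) = 0$.

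Finally, assuming $g(x^2) = 0$, a direct calculation in $Q_S$ yields $ex = x - n_1 \in N$ (where $e = x^2(x+y)^{-2}$ and $n_1 = xy(x+y)^{-1}$), and identifies $\ker \ih_2 = \{n \in N : yn = 0\}$ with the cyclic $S$-submodule $(x - n_1)S$. The element $g(x - n_1) \in K$ satisfies both $g(x - n_1)\cdot x = g(x^2) = 0$ and $g(x - n_1)\cdot y = g((x - n_1)y) = 0$, so it lies in $\Soc(K) = 0$ by the CM-property of $K$. Hence $g$ vanishes on $\ker \ih_2$, and since $\ih_2(N) = yN = yS + N' = \wt R$, $g$ factors through $\ih_2$, giving $h_1: \wt R \to K$ with $g = h_1 \ih_2$. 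Combining, $g = h_1 \ih_2 + h_2 \pi'_1 = hf$. The main obstacle is the explicit calculation in $Q_S$ to identify $\ker \ih_2$ as $(x - n_1)S$, from which the CM-property then easily kills the obstruction to factoring.
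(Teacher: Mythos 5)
Your proof is correct and takes essentially the approach the paper intends: the paper only verifies $uf=0$ and identifies the cokernel, then delegates the rest to ``similar to the previous lemma,'' and your fill-in (left almost split via the neg-isolated pp-type of $x^2\in N$, a pointed morphism from the free realization $(D,x^3)$ to reduce to $g(x^2)=0$, then factoring through $\ih_2$ using $\Soc(K)=0$) is the expected dual adaptation of Lemma~\ref{ar1}. Your explicit computations, in particular $\ker\ih_2=(x-n_1)S$ with $(x-n_1)x=x^2$ and $(x-n_1)y=0$, all check out.
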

\begin{proof}
We will give only the beginning of the proof. The remaining part is similar to the previous lemma.

Because $uf(1)= u(y,x)= xy-xy=0$, the composite map is zero. Further, because $y\in\wt R$ is identified with
$-x\in D$, we conclude that $xy^2=0$ in the cokernel. Because this element equals $xy(x+y)$, we derive $xy=0$,
hence the submodule $xyQ_S$ of $\wt R$ is annihilated.
\end{proof}

By projecting the above almost split sequences we produce the following irreducible maps.

\begin{cor}\label{irr}
The following morphisms are irreducible in the definable category of $\CM$-modules over $S$.

1) The epimorphisms $\wt R\xr{x} C$, $N\xr{x} D$, $D\xr{y} C$ and $N\xr{y}\wt R$.

2) The monomorphisms $\wt R\subset N$ and $C\subset D$.
\end{cor}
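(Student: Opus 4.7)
The plan is to deduce each irreducibility assertion by identifying the morphism in question as a component of one of the two almost split sequences supplied by Lemma \ref{ar1} and Lemma \ref{ar2}, and then invoking the standard fact that the components of an almost split sequence (relative to an indecomposable decomposition of the middle term) are irreducible morphisms in the ambient (here, definable) category. Since each of the modules $N$, $\wt R$, $C$, $D$ is indecomposable pure injective (they appear as points of $\ZCM_S$ in Theorem \ref{points}, with local endomorphism rings established in Lemma \ref{n} and Section \ref{S-fg}), the summands $N\oplus C$ of the sequence in Lemma \ref{ar1} and $\wt R\oplus D$ of the sequence in Lemma \ref{ar2} are already in indecomposable form, so this principle applies directly.

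First I would read off the four components of the sequence in Lemma \ref{ar1}:
\begin{equation*}
0\to \wt R\xr{(\ih_1,\pi_1)} N\oplus C\xr{(\pi'_1,-\ih'_1)} D\to 0\,.
\end{equation*}
From the left-hand map we obtain irreducibility of $\ih_1\colon \wt R\subset N$ and of $\pi_1\colon \wt R\xr{x} C$. From the right-hand map we obtain irreducibility of $\pi'_1\colon N\xr{x} D$ and of $\ih'_1\colon C\subset D$ (the sign in $-\ih'_1$ does not affect irreducibility). This already accounts for the two monomorphisms in 2) and for the two epimorphisms $\wt R\xr{x}C$ and $N\xr{x}D$ in 1).

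Next I would repeat the same reading for Lemma \ref{ar2}:
\begin{equation*}
0\to N\xr{(\ih_2,\pi'_1)} \wt R\oplus D\xr{(\pi_1,-\ih'_2)} C\to 0\,.
\end{equation*}
The component $\ih_2\colon N\xr{y}\wt R$ is irreducible, and so is $\ih'_2\colon D\xr{y} C$, which completes the list of four epimorphisms in 1). (The components $\pi'_1$ and $\pi_1$ reappear here, giving a second derivation of their irreducibility and confirming consistency.)

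The only genuinely conceptual point, and the one I would state carefully, is the general principle being invoked: if $0\to A\xr{f} B\xr{g} C\to 0$ is almost split in a definable category $\mD$ and $B=\bigoplus_i B_i$ with each $B_i$ indecomposable with local endomorphism ring, then for every $i$ the component $f_i\colon A\to B_i$ is left almost split onto $B_i$ and hence irreducible, and dually $g_i\colon B_i\to C$ is right almost split and hence irreducible in $\mD$. Applying this to the two sequences covers every morphism listed.
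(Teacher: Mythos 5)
Your proposal is correct and follows essentially the same route as the paper: the text introducing the corollary reads ``By projecting the above almost split sequences we produce the following irreducible maps,'' which is precisely the step you articulate, namely that the components of the left and right almost split maps in Lemmas \ref{ar1} and \ref{ar2}, with respect to the indecomposable decomposition of the middle term, are irreducible. Your accounting of which morphism comes from which sequence ($\ih_1$, $\pi_1$, $\pi'_1$, $\ih'_1$ from Lemma \ref{ar1} and $\ih_2$, $\ih'_2$ from Lemma \ref{ar2}) is exactly right, and spelling out the general principle with the local-endomorphism-ring hypothesis is a careful touch that the paper leaves implicit.
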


\section{The Ringel quilt}\label{S-quilt}

In this section we will add indecomposable infinitely generated pure injective $\CM$-modules over $S$ to the
$\AR$-quiver of the category $\CM_{fg}$ of finitely generated $\CM$-modules to create the \emph{Ringel quilt} of
this category. The term is borrowed from Ringel \cite{Rin11}, where it is called the \emph{Auslander--Reiten quilt}.

We start with the unbridged version of the $\AR$-quiver of the category $\CM_{fg}$, - see Figure 1. Now we
compactify (add the boundary to) this diagram taking into account the direct limits of rays, and inverse limits
of corays - see Figure \ref{fig4}.

\begin{figure}[t]
$$
\vcenter{%
\xymatrix@C=16pt@R=14pt{%
&&&&*+={\circ}\ar@{}+<-14pt,4pt>*{_{xy\in C}}\ar[dr]_{\subset}&&&&\\
&&&&&*+={\circ}\ar@{}+<20pt,2pt>*{_{xy\in D}}\ar[dr]_y&&&&\\
&&*+={\circ}\ar@{}+<-16pt,4pt>*{_{n\in M_1}}\ar[dr]\ar[dl]\ar@{}+<8pt,8pt>*{.}\ar@{}+<14pt,14pt>*{.}\ar@{}+<20pt,20pt>*{.}
&&&&*+={\circ}\ar@{}+<17pt,4pt>*{_{xy^2\in C}}\ar[dr]_{\subset}&\\
&*+={\circ}\ar@{}+<-16pt,4pt>*{_{n\in X_1}}\ar[dr]\ar[dl]&&
*+={\circ}\ar@{}+<-18pt,0pt>*{_{ny\in Y_2}}\ar[dl]\ar[dr]\ar@{}+<8pt,8pt>*{.}\ar@{}+<14pt,14pt>*{.}\ar@{}+<20pt,20pt>*{.}
&&&&*+={\circ}\ar@{}+<18pt,4pt>*{_{xy^2\in D}}\ar@{}+<8pt,-8pt>*{.}\ar@{}+<14pt,-14pt>*{.}\ar@{}+<20pt,-20pt>*{.}&\\
*+={\circ}\ar@{}+<-16pt,4pt>*{_{xy\in B}}\ar[dr]&&*+={\circ}\ar@{}+<-18pt,0pt>*{_{ny\in N_1}}\ar[dr]\ar[dl]&&
*+={\circ}\ar@{}+<-18pt,0pt>*{_{ny\in M_2}}\ar[dr]\ar[dl]\ar@{}+<8pt,8pt>*{.}\ar@{}+<14pt,14pt>*{.}\ar@{}+<20pt,20pt>*{.}
&&&&&&\\
&*+={\circ}\ar@{}+<-18pt,0pt>*{_{ny\in Y_1}}\ar[dl]\ar[d]\ar[dr]&&
*+={\circ}\ar@{}+<-18pt,0pt>*{_{ny\in X_2}}\ar[dr]\ar[dl]&
&*+={\circ}\ar@{}+<-18pt,0pt>*{_{ny^2\in Y_3}}\ar[dl]\ar@{}+<8pt,8pt>*{.}\ar@{}+<14pt,14pt>*{.}\ar@{}+<20pt,20pt>*{.}
\ar@{}+<8pt,-8pt>*{.}\ar@{}+<14pt,-14pt>*{.}\ar@{}+<20pt,-20pt>*{.}
&&&&*+={\circ}\ar@{}+<-16pt,0pt>*{_{1\in G_y}}\\
*+={\circ}\ar@{}+<-18pt,0pt>*{_{xy^2\in A}}\ar[dr]&
*+={\circ}\ar@{}+<-4pt,0pt>*{_{xy\in\ \ S}}\ar[d]&*+={\circ}\ar@{}+<18pt,0pt>*{_{ny\in M_1}}\ar[dr]\ar[dl]&&
*+={\circ}\ar@{}+<21pt,0pt>*{_{ny^2\in N_2}}\ar[dl]\ar@{}+<8pt,-8pt>*{.}\ar@{}+<14pt,-14pt>*{.}\ar@{}+<20pt,-20pt>*{.}
&&&&\\
&*+={\circ}\ar@{}+<-18pt,0pt>*{_{xy\in X_1}}\ar[dr]\ar[dl]&&
*+={\circ}\ar@{}+<20pt,0pt>*{_{ny^2\in Y_2}}\ar[dl]\ar@{}+<8pt,-8pt>*{.}\ar@{}+<14pt,-14pt>*{.}\ar@{}+<20pt,-20pt>*{.}
&&&&*+={\circ}\ar@{}+<-8pt,8pt>*{_{xy\in \wt R}}%
\ar[dl]_{\subset}\ar@{}+<8pt,8pt>*{.}\ar@{}+<14pt,14pt>*{.}\ar@{}+<20pt,20pt>*{.}
\ar@{-->} '[dr]+<-4pt,4pt> '[uurr]+<40pt,16pt> '[uuuuuuull]+<6pt,30pt> '[uuuuuuulll]+<2pt,2pt>
&&\\
*+={\circ}\ar@{}+<-18pt,0pt>*{_{xy^2\in B}}\ar[dr]&&
*+={\circ}\ar@{}+<21pt,0pt>*{_{ny^2\in N_1}}\ar[dl]\ar@{}+<8pt,-8pt>*{.}\ar@{}+<14pt,-14pt>*{.}\ar@{}+<20pt,-20pt>*{.}
&&&&*+={\circ}\ar@{}+<18pt,0pt>*{_{xy\in N}}\ar[dl]_{y}
\ar@{-->} '[ddrr]+<-12pt,14pt> '[uuurrr]+<60pt,16pt> '[uuuuuuu]+<16pt,40pt> '[uuuuuuul]+<2pt,2pt>
&&\\
&*+={\circ}\ar@{}+<18pt,0pt>*{_{ny\in Y_2}}\ar@{}+<8pt,-8pt>*{.}\ar@{}+<14pt,-14pt>*{.}\ar@{}+<20pt,-20pt>*{.}
\ar@{}+<0pt,-8pt>*{.}\ar@{}+<0pt,-14pt>*{.}\ar@{}+<0pt,-20pt>*{.}&&&&
*+={\circ}\ar@{}+<16pt,0pt>*{_{xy^2\in \wt R}}\ar[ld]_{\subset}&&&&\\
&&&&*+={\circ}\ar@{}+<20pt,0pt>*{_{xy^2\in N}}\ar@{}+<-4pt,-4pt>*{.}\ar@{}+<-8pt,-8pt>*{.}\ar@{}+<-12pt,-12pt>*{.}&&&&\\
&&&*+={\circ}\ar@{}+<20pt,-2pt>*{_{xy\in Q_R}}&&&
}}
$$
\caption{}\label{fig4}
\end{figure}

Namely if we identify $M_k$ with the submodule of $Q_S$ generated by $y$ and $n_k= xy(x+y)^{-k}$ that the composite
morphism $M_k\to M_{k+1}$ on the diagram  is the inclusion. Further the pointed module $xy\in \wt R$ is the
direct limit of this ray, - we add this pointed module to its end.

Going along the parallel ray $(X_1,n)\to (N_1,ny)\to \dots$ in the northeastern direction, we obtain pointed
module $(N,xy)$ as its direct limit. Add this module to the end of this this ray and attach the irreducible map
$\wt R\subset N$ of these pointed modules. Now repeat this procedure with all parallel northeastern rays, for
instance add irreducible maps $N\xr{y} \wt R$.

Further attach the point $1\in G_y$ to the end of the ray $(C,xy)\to (D,xy)\to (C,xy^2)\to \dots$, because this
pointed module is the corresponding direct limit. On the other hand we attach the same module to the the
beginning of the coray $\dots\to (\wt R,xy)\to (N,xy)$, because the pp-type of $1\in G_y$ is the  same as the
pp-type of the inverse limit of pointed modules in this coray.

Now add the pointed module $xy\in Q_R$ to the end of the ray $(\wt R,xy)\to (N, xy)\to (\wt R, xy^2)\to \dots$
heading in southwestern direction. We will also connect $\wt R$ with the module $C$ by the irreducible map
$\wt R\xr{x} C$; and connect the next module $N$ in the ray to the next module $D$ by the irreducible map
$N\xr{x} D$.

These four modules form the following commutative square whose edges are irreducible maps.

\vspace{2mm}

\begin{center}
$
\xymatrix@C=16pt@R=16pt{%
*+{\wt R}\ar[r]^{\subset}\ar[d]_x&*+{N}\ar[d]^x\\
*+{C}\ar[r]^{\subset}&*+{D}\\
}
$
\end{center}

\vspace{3mm}

Further the next morphism $N\xr{y}\wt R$ along the southeastern ray leads to a similar commutative square.

\vspace{2mm}

\begin{center}
$
\xymatrix@C=16pt@R=16pt{%
*+{N}\ar[r]^y\ar[d]_x&*+{\wt R}\ar[d]^x\\
*+{D}\ar[r]^y&*+{C}\\
}
$
\end{center}

\vspace{3mm}

Note that the sides of these squares change the direction on Figure \ref{fig4} --- from going southeastern for
the top maps to heading southwestern for bottom morphisms.

\begin{figure}[t]
$$
\hspace{2cm}
\vcenter{%
%\def\labelstyle{\displaystyle}
%\newdir{|>}{!/4.5pt/\dir{|}*:(1.-.2)\dir^{>}*:(1.+.2)\dir_{>}}
\xymatrix@C=14pt@R=12pt{%
&&&&&&&&&&&&&&&&\\
&&&&&&&&&&&&&&&&\\
&&&&&&&*+={\circ}\ar@{}+<0pt,10pt>*{_{G_y}}&&&&\\
&&&&&&&&&&&&\\
&&&&&*+={\circ}\ar@{}+<0pt,10pt>*{_D}\ar@{}+<6pt,6pt>*{.}\ar@{}+<10pt,10pt>*{.}\ar@{}+<14pt,14pt>*{.}&&&&
*+={\circ}\ar@{}+<0pt,10pt>*{_{\wt R}}\ar[dr]_{\subset}
\ar@{}+<-6pt,6pt>*{.}\ar@{}+<-10pt,10pt>*{.}\ar@{}+<-14pt,14pt>*{.}&&&&&\\
&&&&*+={\circ}\ar[ur]_{\subset}\ar@{}+<0pt,10pt>*{_C}&&&&&&*+={\bullet}\ar@{}+<0pt,10pt>*{_N}\ar[dr]_y
\ar@{=>} '[ur]+<-5pt,-3pt> '[uuuuulll]+<0pt,0pt> '[lllllllllldd]+<4pt,0pt> [lllllllllddd]&&&\\
&&&*+={\circ}\ar@{}+<2pt,12pt>*{_D}\ar[ur]_y&&&&
*+={\circ}\ar[dr]\ar@{}+<0pt,10pt>*{_{Y_3}}\ar@{}+<6pt,6pt>*{.}\ar@{}+<10pt,10pt>*{.}\ar@{}+<14pt,14pt>*{.}
\ar@{}+<-6pt,6pt>*{.}\ar@{}+<-10pt,10pt>*{.}\ar@{}+<-14pt,14pt>*{.}&&&&
*+={\circ}\ar@{}+<0pt,10pt>*{_{\wt R}}\ar[dr]_{\subset}
&&&\\
&&*+={\circ}\ar@{}+<0pt,10pt>*{_C}\ar[ur]_{\subset}&&&&
*+={\circ}\ar[dr]\ar[ur]\ar@{}+<0pt,10pt>*{_{M_2}}\ar@{}+<-6pt,6pt>*{.}\ar@{}+<-10pt,10pt>*{.}\ar@{}+<-14pt,14pt>*{.}&&
*+={\bullet}\ar[dr]\ar@{}+<0pt,10pt>*{_{N_2}}\ar@{}+<6pt,6pt>*{.}\ar@{}+<10pt,10pt>*{.}\ar@{}+<14pt,14pt>*{.}&&&&
*+={\circ}\ar@{}+<0pt,10pt>*{_N}\ar[dr]_y&\\
&*+={\bullet}\ar@{}+<2pt,12pt>*{_D}\ar[ur]_y\ar@{}+<-6pt,-6pt>*{.}\ar@{}+<-10pt,-10pt>*{.}\ar@{}+<-14pt,-14pt>*{.}&&
&&*+={\circ}\ar[dr]\ar[ur]\ar@{}+<0pt,10pt>*{_{Y_2}}\ar@{}+<-6pt,6pt>*{.}\ar@{}+<-10pt,10pt>*{.}\ar@{}+<-14pt,14pt>*{.}
&&
*+={\bullet}\ar[dr]\ar@2{->}[ur]\ar@{}+<0pt,10pt>*{_{X_2}}&&
*+={\circ}\ar[dr]\ar@{}+<0pt,10pt>*{_{Y_2}}\ar@{}+<6pt,6pt>*{.}\ar@{}+<10pt,10pt>*{.}\ar@{}+<14pt,14pt>*{.}&&&&
*+={\circ}\ar@{}+<0pt,10pt>*{_{\wt R}}\ar@{}+<6pt,-6pt>*{.}\ar@{}+<10pt,-10pt>*{.}\ar@{}+<14pt,-14pt>*{.}&&\\
&&&&*+={\circ}\ar[dr]\ar[ur]\ar@{}+<0pt,10pt>*{_{M_1}}
\ar@{}+<-6pt,6pt>*{.}\ar@{}+<-10pt,10pt>*{.}\ar@{}+<-14pt,14pt>*{.}&&
*+={\bullet}\ar[dr]\ar@{=>}[ur]\ar@{}+<0pt,10pt>*{_{N_1}}&&
*+={\circ}\ar[dr]\ar[ur]\ar@{}+<0pt,10pt>*{_{M_1}}&&
*+={\circ}\ar[dr]\ar@{}+<0pt,10pt>*{_{N_1}}\ar@{}+<6pt,6pt>*{.}\ar@{}+<10pt,10pt>*{.}\ar@{}+<14pt,14pt>*{.}&&&&&&&&&\\
&&&*+={\bullet}\ar[dr]\ar[ur]\ar@{=>}[r]\ar@{}+<0pt,10pt>*{_{Y_1}}
\ar@{}+<-6pt,-6pt>*{.}\ar@{}+<-10pt,-10pt>*{.}\ar@{}+<-14pt,-14pt>*{.}
\ar@{}+<-6pt,6pt>*{.}\ar@{}+<-10pt,10pt>*{.}\ar@{}+<-14pt,14pt>*{.}&
*+={\bullet}\ar@{=>}[r]\ar@{}+<0pt,6pt>*{_S}&
*+={\bullet}\ar[dr]\ar@{=>}[ur]\ar@{}+<0pt,10pt>*{_{X_1}}&&*+={\circ}\ar[dr]\ar[ur]\ar[r]\ar@{}+<0pt,10pt>*{_{Y_1}}&
*+={\circ}\ar[r]\ar@{}+<0pt,6pt>*{_S}&*+={\circ}\ar[dr]\ar[ur]\ar@{}+<0pt,10pt>*{_{X_1}}&&
*+={\circ}\ar@{}+<0pt,10pt>*{_{Y_1}}\ar@{}+<6pt,0pt>*{.}\ar@{}+<10pt,0pt>*{.}\ar@{}+<14pt,0pt>*{.}&&&&
*+={\circ}\ar@{}+<0pt,10pt>*{_{Q_R}}&&\\
&&&&*+={\circ}\ar[ur]\ar@{}+<0pt,-10pt>*{_A}\ar@{}+<-6pt,0pt>*{.}\ar@{}+<-10pt,0pt>*{.}\ar@{}+<-14pt,0pt>*{.}&&
*+={\circ}\ar[ur]\ar@{}+<0pt,-10pt>*{_B}&&
*+={\circ}\ar[ur]\ar@{}+<0pt,-10pt>*{_A}&&
*+={\circ}\ar[ur]\ar@{}+<0pt,-10pt>*{_B}\ar@{}+<6pt,0pt>*{.}\ar@{}+<10pt,0pt>*{.}\ar@{}+<14pt,0pt>*{.}&&
}}
$$
\caption{}\label{fig5}
\end{figure}

Gluing the boarder lines of this surface using just discussed irreducible morphisms we obtain the M\"obius stripe.
For instance (see Figure \ref{fig5}) starting from $S$ we may go through $X_1\to N_1\to\dots$ along the ray
heading northeastern, leave the first component of the $\AR$-quiver when reaching $N$, and then enter the
second component at point $D$. Now we could reenter the first component through a coray towards $Y_1$, hence
complete the whole revolution by entering $S$. It is easily checked that the resulting map amounts to
multiplication by $x$.

There is one misleading point in this description. The initial choice of the point $D$ to enter (after $N$) the
second component of the $\AR$-quiver was arbitrary, hence in general we will not get the closed path. Furthermore,
the module $G_x$ is missing from our description. It seems that a better way to give the global picture to the
category $\CM_{fg}$ is shown on Figure \ref{fig6}. Thus this category is the quotient of the vertical strip by the 
action given by the glide reflection shifting upwards.

\begin{figure}
$$
\vcenter{%
\xymatrix@C=6pt@R=6pt{%
&&&&*+{_{G_x}}\ar@{}+<0pt,-10pt>*{.}\ar@{}+<0pt,-16pt>*{.}\ar@{}+<0pt,-22pt>*{.}&&&\\
&&&&&&&\\
&&&&&&&\\
&&&&&&*+{_{G_y}}&\\
&&&&&*+{_D}\ar@{}+<8pt,6pt>*{.}\ar@{}+<12pt,9.5pt>*{.}\ar@{}+<16pt,13pt>*{.}&&*+{_{Q_R}}\ar[ul]\\
&&&&*+{_C}\ar[ur]&&*+{_N}\ar[ul]\ar@{}+<8pt,6pt>*{.}\ar@{}+<12pt,9.5pt>*{.}\ar@{}+<16pt,13pt>*{.}&\\
&&&*+{_D}\ar[ur]&&*+{_{\wt R}}\ar[ul]\ar[ur]&&*+{_A}\ar@{}+<-8pt,6pt>*{.}\ar@{}+<-12pt,9.5pt>*{.}\ar@{}+<-16pt,13pt>*{.}\\
&&*+{_C}\ar[ur]&&*+{_N}\ar@{=>}[ul]\ar[ur]&&*+{_{Y_1}}\ar[ur]
\ar@{}+<-8pt,6pt>*{.}\ar@{}+<-12pt,9.5pt>*{.}\ar@{}+<-16pt,13pt>*{.}&
\ar@{}+<30pt,0pt>*{}\\
&&&*+{_{\wt R}}\ar[ul]\ar[ur]&&*+{_{N_1}}\ar[ur]\ar@{}+<-8pt,6pt>*{.}\ar@{}+<-12pt,9.5pt>*{.}\ar@{}+<-16pt,13pt>*{.}&&
*+{_B}\ar[ul]\\
&&*+{_{G_y}}\ar@{}+<8pt,6pt>*{.}\ar@{}+<12pt,9.5pt>*{.}\ar@{}+<16pt,13pt>*{.}&&
*+{_{Y_2}}\ar[ur]\ar@{}+<-8pt,6pt>*{.}\ar@{}+<-12pt,9.5pt>*{.}\ar@{}+<-16pt,13pt>*{.}&&*+{_{X_1}}\ar[ur]\ar@{=>}[ul]&\\
&*+{_{Q_R}}\ar[ur]&&*+{_D}\ar@{}+<8pt,6pt>*{.}\ar@{}+<12pt,9.5pt>*{.}\ar@{}+<16pt,13pt>*{.}
\ar@{}+<-8pt,6pt>*{.}\ar@{}+<-12pt,9.5pt>*{.}\ar@{}+<-16pt,13pt>*{.}&&
*+{_{M_1}}\ar[ur]\ar[ul]&*+{_S}\ar@{=>}[u]&*+{_A}\ar[ul]\ar@{}+<0pt,-6pt>*{.}\ar@{}+<0pt,-10pt>*{.}\ar@{}+<0pt,-14pt>*{.}\\
&&*+{_N}\ar[ur]\ar@{}+<-8pt,6pt>*{.}\ar@{}+<-12pt,9.5pt>*{.}\ar@{}+<-16pt,13pt>*{.}&&
*+{_C}\ar[ul]\ar@{}+<8pt,6pt>*{.}\ar@{}+<12pt,9.5pt>*{.}\ar@{}+<16pt,13pt>*{.}&&
*+{_{Y_1}}\ar[ul]\ar@{=>}[u]\ar[ur]
\ar@{-->} '[d]+<-0pt,-0pt> '[r]+<10pt,10pt> '[uuuuuuur]+<10pt,4pt> '[uuuuuuuuu]+<0pt,-4pt> '[uuuulllll]+<14pt,0pt> [d]
&\\
&*+{_A}\ar@{}+<8pt,6pt>*{.}\ar@{}+<12pt,9.5pt>*{.}\ar@{}+<16pt,13pt>*{.}&&
*+{_{\wt R}}\ar[ul]\ar[ur]&&*+{_D}\ar[ul]\ar@{}+<8pt,6pt>*{.}\ar@{}+<12pt,9.5pt>*{.}\ar@{}+<16pt,13pt>*{.}&&\\
&&*+{_{Y_1}}\ar[ul]\ar@{}+<8pt,6pt>*{.}\ar@{}+<12pt,9.5pt>*{.}\ar@{}+<16pt,13pt>*{.}&&
*+{_N}\ar[ul]\ar@{=>}[ur]&&*+{_C}\ar[ul]&\\
&*+{_B}\ar[ur]\ar@{}+<-30pt,0pt>*{}&&*+{_{N_1}}\ar[ul]\ar@{}+<8pt,6pt>*{.}\ar@{}+<12pt,9.5pt>*{.}\ar@{}+<16pt,13pt>*{.}&&
*+{_{\wt R}}\ar[ul]\ar[ur]&&\\
&&*+{_{X_1}}\ar[ul]\ar@{=>}[ur]&&*+{_{Y_2}}\ar[ul]\ar@{}+<8pt,6pt>*{.}\ar@{}+<12pt,9.5pt>*{.}\ar@{}+<16pt,13pt>*{.}&&
*+{_{G_y}}\ar@{}+<-8pt,6pt>*{.}\ar@{}+<-12pt,9.5pt>*{.}\ar@{}+<-16pt,13pt>*{.}&\\
&*+{_A}\ar[ur]\ar@{}+<0pt,-6pt>*{.}\ar@{}+<0pt,-10pt>*{.}\ar@{}+<0pt,-14pt>*{.}&
*+{_S}\ar@{=>}[u]&*+{_{M_1}}\ar[ul]\ar[ur]&&
*+{_D}\ar@{}+<8pt,6pt>*{.}\ar@{}+<12pt,9.5pt>*{.}\ar@{}+<16pt,13pt>*{.}
\ar@{}+<-8pt,6pt>*{.}\ar@{}+<-12pt,9.5pt>*{.}\ar@{}+<-16pt,13pt>*{.}&&
*+{_{Q_R}}\ar[ul]\\
&&*+{_{Y_1}}\ar[ul]\ar@{=>}[u]\ar[ur]
\ar@{=} '[d]+<-0pt,-0pt> '[rrrruuu]+<16pt,20pt> '[uuuuuuuu]+<0pt,16pt> '[uuuuuuul]+<-14pt,4pt> '[ull]+<10pt,-4pt> [d]
&&*+{_C}\ar[ur]\ar@{}+<-8pt,6pt>*{.}\ar@{}+<-12pt,9.5pt>*{.}\ar@{}+<-16pt,13pt>*{.}&&
*+{_N}\ar[ul]\ar@{}+<8pt,6pt>*{.}\ar@{}+<12pt,9.5pt>*{.}\ar@{}+<16pt,13pt>*{.}&\\
&&&*+{_D}\ar[ur]\ar@{}+<-8pt,6pt>*{.}\ar@{}+<-12pt,9.5pt>*{.}\ar@{}+<-16pt,13pt>*{.}&&
*+{_{\wt R}}\ar[ul]\ar[ur]&&*+{_A}\\
&&*+{_C}\ar[ur]&&*+{_N}\ar@{=>}[ul]\ar[ur]&&*+{_{Y_1}}\ar[ur]
\ar@{}+<-8pt,6pt>*{.}\ar@{}+<-12pt,9.5pt>*{.}\ar@{}+<-16pt,13pt>*{.}&
\ar@{}+<30pt,0pt>*{}\\
&&&*+{_{\wt R}}\ar[ul]\ar[ur]&&*+{_{N_1}}\ar[ur]\ar@{}+<-8pt,6pt>*{.}\ar@{}+<-12pt,9.5pt>*{.}\ar@{}+<-16pt,13pt>*{.}&&
*+{_B}\ar[ul]\\
&&&&*+{_{Y_2}}\ar[ur]\ar@{}+<-8pt,6pt>*{.}\ar@{}+<-12pt,9.5pt>*{.}\ar@{}+<-16pt,13pt>*{.}&&*+{_{X_1}}\ar@{=>}[ul]\ar[ur]&\\
&&&&&*+{_{M_1}}\ar[ul]\ar[ur]&*+{_S}\ar@{=>}[u]&
*+{_A}\ar[ul]\ar@{}+<0pt,-6pt>*{.}\ar@{}+<0pt,-10pt>*{.}\ar@{}+<0pt,-14pt>*{.}\ar[ul]\\
&&&&&&*+{_{Y_1}}\ar[ul]\ar@{=>}[u]\ar[ur]
\ar@{--} '[d]+<-0pt,-0pt> '[r]+<10pt,10pt> '[uuuuuuur]+<10pt,4pt> '[uuuuuuuuu]+<0pt,-4pt> '[uuuulllll]+<14pt,0pt> [d]
&\\
&&&&&&&
}}
$$
\caption{}\label{fig6}
\end{figure}

This way we could add the point $G_x$ to the top of this diagram, because it is the direct limit of all
rays heading upwards. We will also mark on this figure the points of $\ZCM_S$ included into the basic open set
$(x^2\mid v/ v=0)$ corresponding to the first interval (see Section \ref{S-cut1}).

\section{The infinite radical}

In this section we will calculate the nilpotency index of the radical of the category $\CM_{fg}$ of finitely
generated $\CM$-modules over $S$. Recall that the \emph{radical}, $\rad$, of this category is defined to consist
of all morphism $f: M\to N$ between modules in this category such that for each indecomposable $K\in \CM_{fg}$,
any composite map $K\to M\xr{f} N\to K$ is not invertible. In follows easily that each irreducible morphism is
in the radical, in fact $\rad$ is generated by irreducible morphisms.

The \emph{transfinite powers} of the radical, $\rad^{\lam}$, are defined as follows. Set $\rad^1= \rad$. If $\lam$
is a limit ordinal, then define $\rad^{\lam}= \cap_{\mu< \lam} \rad^{\mu}$. Otherwise $\lam= \nu+ k$ for
a limit ordinal $\nu$, hence set $\rad^{\lam}= (\rad^{\nu})^{k+1}$. For instance $\rad^{\,\om}$ is known as
the \emph{infinite radical}.

The least ordinal $\lam$ such that $\rad^{\lam}=0$ (if exists) is called the \emph{nilpotency index of the radical},
$\nil(\rad)$. If it is defined, each morphism $f$ in $\CM_{fg}$ is assigned its \emph{nilpotency index}, $\nil(f)$,
as the least $\mu$ such that $f\in \rad^{\mu}\sm \rad^{\mu+1}$. Thus the nilpotency index of the radical is the
supremum of indices of morphisms between indecomposable modules.

Here is he main result of this section.

\begin{theorem}\label{nilp}
The nilpotency index of the radical of the category of finitely generated Cohen--Macaulay $S$-modules equals
$\om\cdot 2$.
\end{theorem}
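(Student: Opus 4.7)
The plan is to prove two matching bounds: $\rad^{\om\cdot 2}=0$ and $\rad^{\om+n}\neq 0$ for every finite $n$. The overall strategy follows the template set by the analogous result for the $A_\fty$-singularity $R$ in \cite{Pun16}, where the index equals $\om$; the extra $\om$ in the $S$-case will be traced to the boundary component of the $\AR$-quiver consisting of $C$ and $D$, together with the infinitely generated pure injective modules $\wt R$ and $N$ adjoined to the Ringel quilt in Section \ref{S-quilt}.

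For the lower bound, I would produce explicit families of nonzero morphisms in $\rad^{\om+n}$. First, a morphism lies in $\rad^\om$ provided it factors through arbitrarily deep ray subpaths of the $\AR$-quiver; Figures \ref{fig4} and \ref{fig5} make clear that any morphism into $C$ (or $D$) whose pp-type matches the limit of the southeastern ray of pointed modules admits such factorizations. Concretely, I would start with a morphism $g\colon M\to C$ realizing the $\om$-th layer of the pattern of $(C,xy)$ on Figure \ref{fig3}, so that $g\in\rad^\om$. To promote $g$ to $\rad^{\om+n}$, I would post-compose it with the $n$-fold iterate of the boundary cycle $C\hookrightarrow D\xr{y} C$ (two irreducible morphisms per loop); since multiplication by $y^n$ on $C\cong F[[y]]$ is nonzero, the resulting composite is a nonzero morphism lying in $\rad^{\om+n}$ for every $n$.

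For the upper bound $\rad^{\om\cdot 2}=0$, the plan is to invoke Theorem \ref{cb-rk}, which gives that the Krull--Gabriel dimension of $\CM_S$ equals $2$. The standard correspondence between the radical filtration of $\CM_{fg}$ and the $m$-dimension filtration of the lattice of $\CM$-formulae, valid because the isolation condition holds in $T_{CM}$ as verified in Sections \ref{S-zieg} and \ref{S-cut2}, converts this into the vanishing of the third layer $\rad^{\om\cdot 2}$, since at $m$-dimension $2$ the pp-lattice filtration stabilises after two $\om$-steps. Unlike in the finite dimensional setting, where Schr\"oer's \cite[Prop.\ 6.1]{Schro} forbids limit-ordinal nilpotency indices, our ring is not artinian and the limit value $\om\cdot 2$ is permissible.

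The hard part will be making the alignment between the categorical radical filtration on $\Hom$ sets and the pp-lattice filtration precise at the transfinite levels, especially for the boundary modules $C, D$ whose endomorphism rings are infinite dimensional over $F$. One must argue that no morphism absorbs both a boundary-cycle $\om$-layer and a body-ray $\om$-layer simultaneously, using the $\AR$-sequences of Lemmas \ref{ar1} and \ref{ar2} to show that every morphism factors, after at most one $\om$-step through the boundary, into the main body of the $\AR$-quiver, where the remaining radical filtration has length exactly $\om$. Careful bookkeeping with the pattern diagrams of Figures \ref{fig2} and \ref{fig3} should suffice to exclude the possibility of $\rad^{\om\cdot 2+1}\neq 0$ and thus pin the index exactly at $\om\cdot 2$.
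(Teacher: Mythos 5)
Both directions of your argument contain genuine gaps.

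\textbf{Lower bound.} By the definition in the paper, $\rad^{\om+n}=(\rad^{\om})^{n+1}$, so to exhibit a nonzero element of $\rad^{\om+n}$ you need a composite of $n+1$ morphisms each of which lies in $\rad^{\om}$. Your construction starts with $g\in\rad^{\om}$ and then post-composes with $n$ iterations of the boundary loop $C\hookrightarrow D\xr{y}C$, which you yourself describe as ``two irreducible morphisms per loop,'' i.e.\ a map in $\rad^{2n}$. But $\rad^{\om}\cdot\rad^{2n}$ is just $\rad^{\om}$ again (the infinite radical is an ideal); composing one $\rad^{\om}$-morphism with finitely many merely radical morphisms never promotes it past $\rad^{\om}$. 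You would first have to prove that the loop $y\colon C\to C$ itself lies in $\rad^{\om}$ (by factoring through arbitrarily long paths into the main component), and then count each loop as one $\rad^{\om}$-factor, not two $\rad^{1}$-factors; none of this is done. The paper avoids this issue entirely: it observes that the single map $x\colon S\to S$, the ``one revolution'' around the quilt, lies in $\rad^{\om}$, whence $x^{k+1}$ is a nonzero element of $(\rad^{\om})^{k+1}=\rad^{\om+k}$.

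\textbf{Upper bound.} The claimed ``standard correspondence'' converting $\mathrm{KG}=2$ into $\rad^{\om\cdot 2}=0$ does not exist in the form you invoke. Indeed, the paper notes that the Krull--Gabriel dimension is equal to $2$ for both the $A_{\fty}$-singularity $R$ and $S$, while in the $R$-case (where $x^2=0$) the map $x\colon S\to S$ cannot be iterated nontrivially and the index is strictly smaller; same $m$-dimension, different nilpotency index. So you cannot read off $\rad^{\om\cdot2}=0$ from Theorem \ref{cb-rk}. The paper's upper bound is instead an elementary factorization argument: any $f\in\rad^{\om}$ out of a module in the main component factors through multiplication by $x$, and any $f\in\rad^{\om}$ out of $C$ or $D$ first factors into the main component; iterating, anything in $\rad^{\om\cdot2}=\bigcap_k(\rad^{\om})^{k+1}$ has image divisible by every power of $x$, which by the interval on Figure \ref{fig2} forces that image to be zero. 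Your final paragraph gestures at this kind of bookkeeping but does not carry it out, and it is precisely that bookkeeping which constitutes the proof.
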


Recall that each ordinal can be uniquely written in the form $\om^{\lam_1} n_1+ \dots+ \om^{\lam_k} n_k$ for some
ordinals $\lam_1>\lam_2> \dots> \lam_k$, and some natural numbers $n_i$. Thus $\om\cdot 2$ is the standard form
for $\om+ \om$.

One implication in this theorem is easy. Namely the 'one revolution' map $f: S\xr{x} S$ is in the infinite radical,
hence its power $f^{k+1}$ is a nonzero map in $\rad^{\om +k}$, as desired.

To prove the converse let us make the following remark. Suppose that $f: M\to N$ be a morphism between 
indecomposable modules in $\CM_{fg}$ which belongs to the infinite radical. If $M$ lies in the top component 
of the $\AR$-quiver on Figure \ref{fig4} (i.e. $M$ is $C$ or $D$) then, factoring through irreducible maps, we 
conclude that $f$ splits through a module in the second component. Similarly if $M$ is in the second component, then 
$f$ splits through the multiplication by $x$.

It follows that if $f\in \rad^{\,\om\cdot 2}$ then for each $m\in M$ its image $f(m)$ is divisible by any power 
of $x$. Looking at the Figure \ref{fig2} we conclude that $f(m)=0$, hence $f=0$. From this the result easily follows.

%\clearpage

\end{document}